\newcommand{\h}{\mathfrak h}
\newcommand{\g}{\mathfrak g}
\newcommand{\pint}{\langle\cdot,\cdot\rangle}
\newcommand{\I}{\operatorname{Id}}
\newcommand{\ad}{\operatorname{ad}}
\newcommand{\tr}{\operatorname{tr}}
\newcommand{\R}{\mathbb R}
\newcommand{\referenza}{}
\newtheorem{thm}{Theorem}[section]
\newtheorem*{thm*}{Theorem \referenza}
\newtheorem*{cor*}{Corollary \referenza}
\newtheorem{lem}[thm]{Lemma}
\newtheorem*{lem*}{Lemma \referenza}
\newtheorem{prop}[thm]{Proposition}
\newtheorem*{prop*}{Proposition \referenza}
\newtheorem*{conj*}{Conjecture \referenza}
\newtheorem{rmk}[thm]{Remark}
\newtheorem{exa}[thm]{Example}
\numberwithin{equation}{section}
\title[lcK $4$-dimensional Lie algebras]{Locally conformally K\"ahler structures on four-dimensional solvable Lie algebras}
\author{Daniele Angella}
\address[Daniele Angella]{Dipartimento di Matematica e Informatica ``Ulisse Dini''\\
Universit\`a di Firenze\\
via Morgagni 67/A\\
50134 Firenze, Italy}
\email{daniele.angella@gmail.com}
\email{daniele.angella@unifi.it}
\author{Marcos Origlia}
\address[Marcos Origlia]{KU Leuven Kulak Campus Kortrijk\\ E. Sabbelaan 53\\ BE-8500 Kortrijk, Belgium; and FaMAF-CIEM, Universidad Nacional de C\'{o}rdoba \\ 5000 C\'{o}rdoba\\ Argentina}
\email{marcosmiguel.origlia@kuleuven.be}
\email{marcosoriglia@gmail.com}
\keywords{locally conformally K\"ahler, solvable Lie algebra}
\thanks{The first-named author is supported by project SIR 2014 AnHyC ``Analytic aspects in complex and hypercomplex geometry'' (code RBSI14DYEB), by project PRIN 2017 ``Real and Complex Manifolds: Topology, Geometry and holomorphic dynamics'' (code 2017JZ2SW5), and by GNSAGA of INdAM. He also acknowledges the support of Istituto Italiano di Cultura in C\'ordoba.
The second-named author is supported by CONICET, SECyTUNC (Argentina) and the Research Foundation Flanders (Project G.0F93.17N)
}
\subjclass[2010]{53B35, 53A30, 22E25}
\date{\today}
\begin{document}

\begin{abstract}
We classify and investigate locally conformally K\"ahler structures on four-dimensional solvable Lie algebras up to linear equivalence. As an application we can produce many examples in higher dimension, here including lcK structures on Oeljeklaus-Toma manifolds, and we also give a geometric interpretation of some of the $4$-dimensional structures in our classification.
\end{abstract}

\maketitle

\section*{Introduction}
The aim of this note is to provide explicit examples of locally conformally K\"ahler structures on complex surfaces and higher dimensional manifolds, by classifying left-invariant lcK structures on four-dimensional solvable Lie groups.

A {\em locally conformally K\"ahler} (shortly, {\em lcK}) metric $g$ on a complex manifold $(X,J)$ is a Hermitian metric that locally admits a conformal change $\exp{(-f)} g\lfloor_U$ making it K\"ahler.
Equivalently, the associated $(1,1)$-form $\Omega:=g(J\_,\_)$ satisfies $d\Omega=\theta\wedge\Omega$ where the {\em Lee form} $\theta\stackrel{\text{loc}}{=}df$ is a closed $1$-form.
In other words, one gets a covering endowed with a K\"ahler metric on which the deck transformations group acts by holomorphic homotheties.
One can refer to \cite{dragomir-ornea, ornea-verbitsky-report, bazzoni} and references therein for an open-ended account on lcK geometry: just to cite a few of the several contributions to lcK geometry in the last twenty years, see \cite{ ornea-verbitsky-MRL, ornea-verbitsky-MathAnn, ornea-verbitsky-JGP, gini-ornea-parton, vuletescu, otiman, cappellettimontano-denicola-marrero-yudin, madami-moroianu-pilca, belgun, brunella, pontecorvo-vaisman, gauduchon-moroianu-ornea, achk, bazzoni-BLMS, oeljeklaus-toma, kasuya}.
With the only exception of some Inoue surfaces, every known compact complex surface admits lcK metrics by \cite{belgun, brunella}, see also \cite{pontecorvo-vaisman} for a survey on the Vaisman question.

Among compact complex surfaces, one can describe complex tori, hyperelliptic surfaces, Inoue surfaces of type $S^0$, primary Kodaira surfaces, secondary Kodaira surfaces, and Inoue surfaces of type $S^{\pm}$ as compact quotients of solvable Lie groups endowed with left-invariant complex structures by \cite[Theorem 1]{hasegawa-jsg}.
Complex structures on four-dimensional Lie algebras are classified by \cite{snow-crelle, snow, ovando-manuscr, achk}, see also \cite{ovando}. On the other hand, locally conformally K\"ahler metrics underlie {\em locally conformally symplectic} ({\em lcs}) structures, which are similarly defined. Extending Ovando's results on four-dimensional symplectic Lie algebras \cite{ovando-symplectic}, a classification of four-dimensional locally conformally symplectic Lie algebras is given with structure results in \cite{angella-bazzoni-parton}.

Locally conformally K\"ahler structures on four-dimensional reductive Lie algebras  are studied in \cite[Theorem 4.6]{achk}. In this note, we classify locally conformally K\"ahler structures on four-dimensional {\em solvable} Lie groups. (See \cite{andrada-origlia-CM} for a survey and results on invariant lcK structures on solvmanifolds.) The classification is up to linear equivalence, and complex automorphisms are specified.
The results are summarized in Theorem \ref{thm:classification-lck} and Table \ref{table:lck-solvable}.
We use the classification of complex structures \cite{ovando} and the classification of lcs structures \cite{angella-bazzoni-parton} for four-dimensional solvable Lie algebras, and the computations have been performed with the help of the mathematical open-source software system Sage \cite{sage} (the authors will be happy to share the code with anyone who might be interested).

We are also interested in {\em Vaisman} structures on a $4$-dimensional Lie algebra, that is, lcK structures whose Lee form is parallel with respect to the Levi-Civita connection of the Hermitian structure. Let us recall the following characterization result from \cite{andrada-origlia}.

\begin{lem}[{\cite[Lemma 3.3]{andrada-origlia}}]\label{Vaisman}
Let $\mathfrak g$ be a Lie algebra with an lcK structure $(J, \Omega, \theta)$ and let $A \in \mathfrak g$ be such that $A \in (\ker \theta)^\perp$ with respect to the compatible metric given by $(J, \Omega)$ and $\theta (A)=1$. Then $(J, \Omega, \theta)$ is Vaisman if and only if the adjoint operator $\operatorname{ad}_A$ is a skew-symmetric endomorphism of $\mathfrak g$.
\end{lem}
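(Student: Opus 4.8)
The plan is to work directly from the defining equation $d\Omega = \theta \wedge \Omega$ together with the standard structural characterization of Vaisman metrics, namely that $\nabla \theta = 0$ for the Levi-Civita connection $\nabla$ of the compatible metric $g$. First I would fix notation: write $g = g(\cdot,\cdot)$ for the Hermitian metric determined by $(J,\Omega)$, let $\theta^\sharp$ be the metric dual of $\theta$, and observe that under the normalization hypotheses ($A \in (\ker\theta)^\perp$ and $\theta(A) = 1$) one has $A = \theta^\sharp / |\theta^\sharp|^2$, so that $\nabla\theta = 0$ is equivalent to $\nabla A = 0$ (the length $|\theta^\sharp|$ is constant on a Lie algebra since everything is left-invariant, and in fact $d\theta = 0$ forces it). Thus the goal becomes: $\nabla_X A = 0$ for all $X \in \mathfrak g$ if and only if $\ad_A$ is skew-symmetric.

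The key computational tool is the Koszul formula for the Levi-Civita connection of a left-invariant metric,
\[
2\,g(\nabla_X Y, Z) = g([X,Y],Z) - g([Y,Z],X) + g([Z,X],Y),
\]
valid for all $X, Y, Z \in \mathfrak g$. I would apply this with $Y = A$ to get an expression for $g(\nabla_X A, Z)$ in terms of brackets, and then rewrite the bracket terms using the adjoint operators $\ad_A$, $\ad_X$, $\ad_Z$. The term $g([A,Z],X) = g(\ad_A Z, X)$ and the term $g([X,A],Z) = -g(\ad_A X, Z)$ are precisely the ones that combine into the symmetric-versus-skew defect of $\ad_A$; the remaining term $g([Z,X],A) = g([Z,X],\theta^\sharp)/1$ should be handled by the lcK condition. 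Here is where I would invoke $d\Omega = \theta\wedge\Omega$: contracting this identity appropriately (e.g. evaluating $d\Omega$ on the triple $(Z,X,JW)$ or using $d\Omega(U,V,W) = -\Omega([U,V],W) - \Omega([V,W],U) - \Omega([W,U],V)$ together with $\Omega = g(J\cdot,\cdot)$) yields a formula relating $g([Z,X], A)$ and its $J$-companions to $\theta \wedge \Omega$ evaluated on the same vectors. Combining these, the condition $\nabla A = 0$ should collapse to the statement that $g(\ad_A X, Z) + g(X, \ad_A Z) = 0$ for all $X, Z$, i.e.\ skew-symmetry of $\ad_A$, plus possibly an auxiliary identity that is automatically satisfied given that $\theta$ is closed and $A$ has constant length.

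The main obstacle I anticipate is bookkeeping: cleanly separating, in the Koszul expansion of $\nabla_X A$, the part that is controlled by the lcK equation from the part that measures the failure of $\ad_A$ to be skew. One has to use $J$-invariance of $g$, the closedness of $\theta$ (so $\theta([U,V]) = 0$ for all $U,V$, equivalently $[\mathfrak g, \mathfrak g] \subseteq \ker\theta$, which also gives $[A, \ker\theta] \subseteq \ker\theta$ after a short argument and constrains $\ad_A$), and the explicit form $\theta\wedge\Omega(U,V,W) = \theta(U)\Omega(V,W) + \theta(V)\Omega(W,U) + \theta(W)\Omega(U,V)$. A subtle point is that one direction is essentially formal while the other requires knowing that if $\ad_A$ is skew then $\nabla_X A$ vanishes \emph{identically} and not merely on $\ker\theta$; this uses $\nabla_A A = 0$, which follows because $g(\nabla_A A, Z) = -\tfrac12 g([A,Z],A) = -\tfrac12 g(\ad_A Z, A) = \tfrac12 g(Z, \ad_A A) = 0$ by skew-symmetry, and the remaining components are dispatched similarly. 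Once the Koszul identity is written out with all terms grouped, both implications should follow by inspection, so I would present the computation compactly and verify the two directions in parallel.
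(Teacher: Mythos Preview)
The paper does not prove this lemma: it is quoted verbatim from \cite[Lemma~3.3]{andrada-origlia} and used only as a black box. So there is no ``paper's own proof'' to compare against here.

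That said, your approach via the Koszul formula is the right one and would succeed, but you are overcomplicating one step. You write that the term $g([Z,X],A)$ ``should be handled by the lcK condition'' and plan to invoke $d\Omega=\theta\wedge\Omega$. In fact this term vanishes immediately from the closedness of $\theta$ alone: since $A=\theta^\sharp/|\theta^\sharp|^2$, one has
\[
g([Z,X],A)=\frac{1}{|\theta^\sharp|^2}\,g([Z,X],\theta^\sharp)=\frac{1}{|\theta^\sharp|^2}\,\theta([Z,X])=0,
\]
because $d\theta=0$ gives $\theta([Z,X])=-d\theta(Z,X)=0$. With that term gone, the Koszul identity collapses directly to
\[
2\,g(\nabla_X A,Z)=-\bigl(g(\ad_A X,Z)+g(X,\ad_A Z)\bigr),
\]
and the equivalence $\nabla A=0\Longleftrightarrow \ad_A$ skew-symmetric is immediate. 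The full lcK equation $d\Omega=\theta\wedge\Omega$ is never needed for this equivalence; it only enters in the hypothesis that $(J,\Omega,\theta)$ is an lcK structure to begin with (so that ``Vaisman'' is defined). Your anticipated ``main obstacle'' of bookkeeping with $J$-invariance and $d\Omega$ therefore does not arise.
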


Using this characterization, we determine, among all lcK structures for each $4$-dimensional Lie algebra, the ones being Vaisman (see also \cite{andrada-origlia-Vaisman} for a nice description of unimodular Lie algebras admitting Vaisman structures).

With the same aim as \cite[page 56]{ovando}, hopefully the classification in Table \ref{table:lck-solvable} might be useful in the future to provide specific examples and to solve open problems. In this direction we use our classification to exhibit explicit examples of Lie algebras in dimension higher than four admitting an lcK structure.
More precisely, we adapt some constructions in \cite{origlia-construction, angella-bazzoni-parton} to the lcK case and, as an application, we can produce many examples in higher dimension starting from dimension four, as well as we give a geometric interpretation of some of the $4$-dimensional structures in Table \ref{table:lck-solvable}.
In particular, the lcK extension discussed in Proposition \ref{construccionLCK} allows to recover lcK structures on Oeljeklaus-Toma manifolds \cite{oeljeklaus-toma}.

\bigskip

\begin{small}
\noindent{\itshape Notation.}
Structure equations for Lie algebras are written using the Salamon notation: {\itshape e.g.} $\mathfrak{rh}_3=(0,0,-12,0)$ means that we fix a coframe $(e^1,e^2,e^3,e^4)$ for $\mathfrak{rh}_3^\vee$ such that $de^1=de^2=de^4=0$ and $de^3=-e^1\wedge e^2$. Complex structures and tensors are usually expressed in terms of the above coframe. For example, complex structures on $\mathfrak g$ are defined in terms of their dual $J \colon \mathfrak{g}^\vee \to \mathfrak{g}^\vee$ with the convention $J\alpha=\alpha(J^{-1}\_)$.
By lcs structure, we mean a non-symplectic structure, namely, the Lee form is assumed to be non-exact (actually non-zero).
\end{small}

\bigskip

\begin{small}
\noindent{\itshape Acknowledgement.}
Part of this work has been set up during the visit of the first-named author at Universidad Nacional de C\'ordoba with the support of the Istituto Italiano di Cultura: he warmly thanks prof. Adri\'an Andrada, the Facultad de Matem\'atica, Astronom\'ia, F\'isica y Computaci\'on, and the Istituto Italiano di Cultura in C\'ordoba for their warm hospitality.
The authors would like to thank Adri\'an Andrada and Giovanni Bazzoni for several interesting discussions on the subject, and the anonymous Referee for her/his comments that improved the presentation of the paper.
\end{small}

\section{Classification of lcK structures on four-dimensional Lie algebras}

In this section, we summarize the classification of locally conformally K\"ahler structures on $4$-dimensional Lie algebras up to linear equivalence. Here, by {\em linear equivalent} lcK structures $(J_1, \Omega_1, \theta_1)$ and $(J_2, \omega_2, \theta_2)$ on the Lie algebra $\mathfrak g$ of dimension $\dim_{\mathbb R} \mathfrak g \geq 4$ we mean that there is an automorphism $A\in\mathfrak{gl}(\mathfrak{g})$ of the Lie algebra such that $J_2 = A^{-1} \circ J_1 \circ A$ and $\Omega_1 = A^*\Omega_2 = \Omega_2(A\_, A\_)$; by the injectivity of $\Omega_1\wedge\_ \colon \wedge^1\mathfrak{g}^\vee \to \wedge^3\mathfrak{g}^\vee$, we also get $\theta_1=A^*\theta_2$.

\subsection{LcK structures on four-dimensional solvable Lie algebras}
Complex structures on $4$-dimensional solvable Lie algebras up to linear equivalence are classified by G. Ovando, see \cite{ovando} and references therein. In Table \ref{table:complex}, we summarize her results as in \cite[Proposition 3.2]{ovando} (note just the correction of a typo in case $J_2$ for $\mathfrak{rr}^\prime_{3,\gamma}$). Locally conformally symplectic structures on $4$-dimensional Lie algebras are classified in \cite{angella-bazzoni-parton}.
In the next Section \ref{sec:proof-class-lck}, we will combine these classification results to get the following.

\begin{table}[h]
\def\arraystretch{1.5}
\centering
{\resizebox{\textwidth}{!}{
\begin{tabular}{lcc|ll|l|c}
   Lie algebra & c.s. & n. & parameters & structure equations & complex structure & $\mathcal{Z}(\mathfrak g)$  \\
\toprule
    \hline
    $\mathfrak r\mathfrak h_3$ & \checkmark & \checkmark & & $(0,0,-12,0)$ & $Je^1=e^2$, $Je^3=e^4$ & $\langle e_3, e_4\rangle$\\
    \hline
    \multirow{2}{*}{$\mathfrak r\mathfrak r_{3,\lambda}$} & \multirow{2}{*}{\checkmark} & \multirow{2}{*}{$\times$} & $\lambda=0$ & $(0,-12,0,0)$ & $Je^1=e^2$, $Je^3=e^4$ & \multirow{2}{*}{$\langle e_4\rangle$}\\\cline{4-6}
    & & & $\lambda=1$ & $(0,-12,-13,0)$ & $Je^1=e^4$, $Je^2=-e^3$ & \\
    \hline
    \multirow{3}{*}{$\mathfrak r\mathfrak r'_{3,\gamma}$} & \multirow{3}{*}{$\times$} & \multirow{3}{*}{$\times$} & $\gamma=0$ & $(0,-13,12,0)$ & $Je^1=e^4$, $Je^2=e^3$ & \multirow{3}{*}{$\langle e_4\rangle$} \\\cline{4-6}
    & & & \multirow{2}{*}{$\gamma>0$} & \multirow{2}{*}{$(0,-\gamma 12-13,12-\gamma 13,0)$} & $J_1e^1=e^4$, $J_1e^2=e^3$ & \\\cline{6-6}
    & & & & & $J_2e^1=e^4$, $J_2e^2=-e^3$ & \\
    \hline
    $\mathfrak r_2\mathfrak r_2$ & \checkmark & $\times$ & & $(0,-12,0,-34)$ & $Je^1=e^2$, $Je^3=e^4$ & $0$\\
    \hline
    \multirow{2}{*}{$\mathfrak r'_2$}  & \multirow{2}{*}{$\times$} & \multirow{2}{*}{$\times$} & & \multirow{2}{*}{$(0,0,-13+24,-14-23)$} & $J_1e^1=e^3$, $J_1e^2=e^4$ & \multirow{3}{*}{$0$}\\\cline{6-6}
    & & & & & $J_2e^1=-ae^1+\frac{a^2+1}{b} e^2$, $J_2e^3=e^4$ &\\
    & & & & & with $b\neq0$ &\\
    \hline
    $\mathfrak r_{4,\mu}$ & \checkmark & $\times$ & $\mu=1$ & $(14,24+34,34,0)$ & $Je^1=e^2$, $Je^3=-e^4$ & $0$ \\
    \hline
    \multirow{2}{*}{$\mathfrak r_{4,\alpha,\beta}$} & \multirow{2}{*}{\checkmark} & \multirow{2}{*}{$\times$} & $-1<\alpha<1$, $\alpha\neq 0$, $\beta=1$ & $(14,\alpha 24,34,0)$ & $Je^1=e^3$, $Je^2=-e^4$ & \multirow{2}{*}{$0$}\\\cline{4-6}
    & & & $-1\leq\alpha=\beta< 1$, $\alpha\neq0$ & $(14,\alpha 24,\alpha 34,0)$ & $Je^1=-e^4$, $Je^2=e^3$ \\
    \hline
    \multirow{2}{*}{$\mathfrak r'_{4,\gamma,\delta}$} & \multirow{2}{*}{$\times$} & \multirow{2}{*}{$\times$} & \multirow{2}{*}{$\gamma\in\mathbb R$, $\delta>0$} & \multirow{2}{*}{$(14,\gamma 24+\delta 34,-\delta 24+\gamma 34,0)$} & $J_1e^1=-e^4$, $J_1e^2=e^3$ & \multirow{2}{*}{$0$}\\\cline{6-6}
    & & & & & $J_2e^1=-e^4$, $J_2e^2=-e^3$ & \\
    \hline
    \multirow{2}{*}{$\mathfrak d_4$} & \multirow{2}{*}{\checkmark} & \multirow{2}{*}{$\times$} & & \multirow{2}{*}{$(14,-24,-12,0)$} & $J_1e^1=-e^3$, $J_1e^2=-e^4$ & \multirow{2}{*}{$\langle e_3\rangle$}\\\cline{6-6}
    & & & & & $J_2e^1=e^2-e^3$, $J_2e^2=-e^4$ & \\
    \hline
    \multirow{6}{*}{$\mathfrak d_{4,\lambda}$} & \multirow{6}{*}{\checkmark} & \multirow{6}{*}{$\times$} & $\lambda=1$ & $(14,0,-12+34,0)$ & $Je^1=e^4$, $Je^2=e^3$ & \multirow{6}{*}{$0$}\\\cline{4-6}
    & & & \multirow{3}{*}{$\lambda=\frac{1}{2}$} & \multirow{3}{*}{$(\frac12\times 14,\frac12\times 24,-12+34,0)$} & $J_1e^1=e^2$, $J_1e^3=-e^4$ & \\\cline{6-6}
    & & & & & $J_2e^1=-e^2$, $J_2e^3=-e^4$ & \\\cline{6-6}
    & & & & & $J_3e^1=-e^4$, $J_3e^2=-\frac12 e^3$ & \\\cline{4-6}
    & & & \multirow{2}{*}{$\lambda>\frac{1}{2}$, $\lambda\neq1$} & \multirow{2}{*}{$(\lambda 14,(1-\lambda)24,-12+34,0)$} & $J_1e^1=\lambda e^4$, $J_1e^2=e^3$ & \\\cline{6-6}
    & & & & & $J_2e^1=e^3$, $J_2e^2=(\lambda-1)e^4$ & \\
    \hline
    \multirow{6}{*}{$\mathfrak d'_{4,\delta}$} & \multirow{6}{*}{$\times$} & \multirow{6}{*}{$\times$} & \multirow{2}{*}{$\delta=0$} & \multirow{2}{*}{$(24,-14,-12,0)$} & $J_2e^1=e^2$, $J_2e^3=e^4$ & \multirow{2}{*}{$\langle e_3\rangle$}\\\cline{6-6}
    & & & & & $J_3e^1=e^2$, $J_3e^3=-e^4$ & \\\cline{4-7}
    & & & \multirow{4}{*}{$\delta>0$} & \multirow{4}{*}{$(\frac{\delta}{2}14+24,-14+\frac{\delta}{2}24,-12+\delta 34,0)$} & $J_1e^1=-e^2$, $J_1e^3=-e^4$ & \multirow{4}{*}{$0$}\\\cline{6-6}
    & & & & & $J_2e^1=e^2$, $J_2e^3=e^4$ & \\\cline{6-6}
    & & & & & $J_3e^1=e^2$, $J_3e^3=-e^4$ & \\\cline{6-6}
    & & & & & $J_4e^1=-e^2$, $J_4e^3=e^4$ & \\\hline
    $\mathfrak h_{4}$ & \checkmark & $\times$ & & $(\frac{1}{2}14+24,\frac{1}{2}24,-12+34,0)$ & $Je^1=\frac12 e^3$, $Je^2=-e^4$ & $0$ \\
    \hline
    \bottomrule
\end{tabular}
}}
\caption{Complex non-K\"ahler structures on solvable Lie algebras in dimension $4$ up to linear equivalence, following \cite{ovando}. ("c.s." is for "completely-solvable", "n." is for "nilpotent", and $\mathcal{Z}$ denotes the center.)
}
\label{table:complex}
\end{table}

\begin{thm}\label{thm:classification-lck}
Non-K\"ahler locally conformally K\"ahler structures on $4$-dimensional solvable Lie algebras are classified up to linear equivalence in Table \ref{table:lck-solvable} in Appendix \ref{app:table}.
\end{thm}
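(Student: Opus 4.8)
The plan is to reduce the classification to a systematic, case-by-case computation that combines two existing inputs: the classification of complex structures on $4$-dimensional solvable Lie algebras recalled in Table~\ref{table:complex} (from \cite{ovando}), and the classification of lcs structures on $4$-dimensional Lie algebras from \cite{angella-bazzoni-parton}. The key observation is that an lcK structure $(J,\Omega,\theta)$ is precisely the data of a complex structure $J$ together with a compatible lcs structure $(\Omega,\theta)$, i.e. $\Omega(J\_,J\_)=\Omega$ and $g:=\Omega(\_,J\_)$ positive definite. So for each solvable Lie algebra $\mathfrak g$ in the list, and each complex structure $J$ on it up to linear equivalence, I would take the general lcs form $\Omega$ on $\mathfrak g$ (with its Lee form $\theta$ determined by $d\Omega=\theta\wedge\Omega$), impose the $J$-invariance condition $\Omega(J\_,J\_)=\Omega$, and then impose positive-definiteness of the induced symmetric bilinear form. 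This yields a (generally nonlinear) system on the coefficients of $\Omega$; solving it describes the space of compatible lcK metrics.

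First I would fix, Lie algebra by Lie algebra, the representatives of complex structures from Table~\ref{table:complex}, and independently write down the generic closed or non-exact $2$-form that defines an lcs structure, reading the Lee form off from $d\Omega=\theta\wedge\Omega$. Second, for a fixed pair $(\mathfrak g, J)$ I would solve the linear system expressing $J$-compatibility of $\Omega$; this cuts the parameter space of lcs forms down to those Hermitian with respect to $J$. Third, within that family I would impose that $g(\_,\_)=\Omega(\_,J\_)$ is positive definite (a set of inequalities / leading-principal-minor conditions), which determines the actual moduli of lcK structures. Fourth, I would normalize the surviving families by the group of complex automorphisms of $(\mathfrak g,J)$ — that is, $A\in\mathrm{GL}(\mathfrak g)$ with $A$ a Lie algebra automorphism and $A^{-1}JA=J$ — to obtain representatives up to linear equivalence, recording the residual parameters and the stabilizer of each normal form. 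Assembling these outputs over all the solvable Lie algebras and all their complex structures produces Table~\ref{table:lck-solvable}, which is the content of the theorem. Since all steps are explicit polynomial manipulations, they can be (and, as noted in the introduction, were) carried out with the help of Sage \cite{sage}.

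The main obstacle is not conceptual but organizational and computational: there are many Lie algebras, several of them with continuous parameters ($\mathfrak{rr}_{3,\lambda}$, $\mathfrak r'_{4,\gamma,\delta}$, $\mathfrak d_{4,\lambda}$, $\mathfrak d'_{4,\delta}$, etc.) and multiple inequivalent complex structures each, so one must be careful to (a) not miss special values of the parameters where the automorphism group jumps or where extra lcK structures (or Kähler ones, to be excluded) appear, and (b) correctly reduce each Hermitian family by the — sometimes sizeable — automorphism group, so that the normal forms in the table are genuine representatives and the claimed parametrization is neither redundant nor incomplete. A further delicate point is bookkeeping the compatibility between Ovando's normalizations of $J$ and the normalizations of $\Omega$ from \cite{angella-bazzoni-parton}, since the two classifications were obtained with possibly different choices of coframe; reconciling them (and, as the text notes, correcting a typo in the case $J_2$ on $\mathfrak{rr}'_{3,\gamma}$) is where errors are most likely to creep in. Once the families are in hand, checking which are Vaisman is straightforward using Lemma~\ref{Vaisman}: pick $A\in(\ker\theta)^\perp$ with $\theta(A)=1$ and test whether $\ad_A$ is skew-symmetric for the compatible metric.
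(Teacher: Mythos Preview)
Your proposal is correct and follows essentially the same approach as the paper: for each solvable Lie algebra and each complex structure from Table~\ref{table:complex}, the paper takes the generic lcs structure from \cite{angella-bazzoni-parton}, imposes $J$-invariance and $J$-positivity, and then reduces by the group of complex automorphisms of $(\mathfrak g,J)$, exactly as you describe. The Vaisman check via Lemma~\ref{Vaisman} is also handled in the same way.
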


Existence of lattices for solvable Lie groups is investigated in \cite{bock}, and compact complex surfaces diffeomorphic to solvmanifolds are studied in \cite{hasegawa-jsg}.
Recall that:
\begin{itemize}
\item $\mathbb R^4$ is the Lie algebra associated to complex tori;
\item $\mathfrak{rh}_3$ is the Lie algebra associated to primary Kodaira surfaces;
\item $\mathfrak{rr}^\prime_{3,0}$ is the Lie algebra associated to hyperelliptic surfaces;
\item $\mathfrak{r}^\prime_{4,-\frac{1}{2},\delta}$ with $\delta>0$ is the Lie algebra associated to Inoue surfaces of type $S^0$;
\item $\mathfrak{d}_4$ is the Lie algebra associated to Inoue surfaces of type $S^+$;
\item $\mathfrak{d}^\prime_{4,0}$ is the Lie algebra associated to secondary Kodaira surfaces;
\item and the Lie groups associated to the other algebras do not admit compact quotients.
\end{itemize}
See also \cite{belgun, pontecorvo-vaisman} and references therein as for the problem of existence of lcK structures on compact complex surfaces, known as the {\em Vaisman question}.

\subsection{LcK structures on four-dimensional reductive Lie algebras}
Locally conformally K\"ahler structures on $4$-dimensional reductive Lie algebras are studied in \cite[Theorem 4.6]{achk}. More precisely, they show that the only $4$-dimensional reductive Lie algebras admitting locally conformally pseudo-K\"ahler structures are $\mathfrak{gl}_2=\mathfrak{sl}_2\oplus\mathbb R=(-23, -2\times 12, 2\times13,0)$ (see \cite[Proposition 4.7]{achk} for complex structures, see \cite[Section 3.2]{angella-bazzoni-parton} for lcs structures) and $\mathfrak{u}_2=\mathfrak{su}_2\oplus\mathbb{R}=(23, -13, 12, 0)$ (see \cite[Proposition 4.4]{achk} for complex structures, see \cite[Section 3.1]{angella-bazzoni-parton} for lcs structures).
For the sake of completeness, we recall here the lcK structures in \cite[Theorem 4.9, Theorem 4.6]{achk}.

\subsubsection{$\mathfrak{gl}_2$}
Consider the Lie algebra
$$ \mathfrak{gl}_2 = \mathfrak{sl}_2\oplus\mathbb R = (-23, -2\times 12, 2\times13,0) , $$
where $e_4$ is a generator of $\mathbb R$.
Left-invariant complex structures are described in \cite[Proposition 4.7]{achk}: they belong to two families, both depending on one parameter $\mu = \mu_1+\sqrt{-1}\mu_2 \in \mathbb C \setminus \sqrt{-1}\mathbb R$, and they are defined by
$$ \left\{\begin{array}{rcl}
J_{1,\mu} e_1 &=& e_2+e_3 \\
J_{1,\mu} e_2 &=& - \frac{1}{2} e_1 - \frac{\mu_2}{2\mu_1} (e_2-e_3) + \frac{1}{\mu_1}e_4 \\
J_{1,\mu} e_3 &=& - \frac{1}{2} e_1 + \frac{\mu_2}{2\mu_1} (e_2-e_3) - \frac{1}{\mu_1}e_4 \\
J_{1,\mu} e_4 &=& - \frac{|\mu|^2}{2\mu_1}(e_2-e_3) + \frac{\mu_2}{\mu_1} e_4
\end{array}\right.
$$
and
$$ \left\{\begin{array}{rcl}
J_{2,\mu} e_1 &=& -e_2-e_3 \\
J_{2,\mu} e_2 &=& \frac{1}{2} e_1 - \frac{\mu_2}{2\mu_1} (e_2-e_3) - \frac{1}{\mu_1}e_4 \\
J_{2,\mu} e_3 &=& \frac{1}{2} e_1 + \frac{\mu_2}{2\mu_1} (e_2-e_3) + \frac{1}{\mu_1}e_4 \\
J_{2,\mu} e_4 &=& \frac{|\mu|^2}{2\mu_1}(e_2-e_3) + \frac{\mu_2}{\mu_1} e_4
\end{array}\right.
$$
namely, with respect to the dual coframe, they are associated to the matrices
$$ J_{1,\mu} = \left( \begin{matrix}
0 & -1 & -1 & 0 \\
\frac{1}{2} & \frac{\mu_{2}}{2 \, \mu_{1}} & -\frac{\mu_{2}}{2 \, \mu_{1}} & -\frac{1}{\mu_{1}} \\
\frac{1}{2} & -\frac{\mu_{2}}{2 \, \mu_{1}} & \frac{\mu_{2}}{2 \, \mu_{1}} & \frac{1}{\mu_{1}} \\
0 & \frac{\mu_{1}^{2} + \mu_{2}^{2}}{2 \, \mu_{1}} & -\frac{\mu_{1}^{2} + \mu_{2}^{2}}{2 \, \mu_{1}} & -\frac{\mu_{2}}{\mu_{1}}
\end{matrix}\right) \in \mathrm{End}(\mathfrak{gl}_2^\vee) $$
and
$$
J_{2,\mu} = \left( \begin{matrix}
0 & 1 & 1 & 0 \\
-\frac{1}{2} & \frac{\mu_{2}}{2 \, \mu_{1}} & -\frac{\mu_{2}}{2 \, \mu_{1}} & \frac{1}{\mu_{1}} \\
-\frac{1}{2} & -\frac{\mu_{2}}{2 \, \mu_{1}} & \frac{\mu_{2}}{2 \, \mu_{1}} & -\frac{1}{\mu_{1}} \\
0 & -\frac{\mu_{1}^{2} + \mu_{2}^{2}}{2 \, \mu_{1}} & \frac{\mu_{1}^{2} + \mu_{2}^{2}}{2 \, \mu_{1}} & -\frac{\mu_{2}}{\mu_{1}}
\end{matrix}\right) \in \mathrm{End}(\mathfrak{gl}_2^\vee)
. $$
These two families are related by an automorphism of $\mathfrak{gl}_2$, namely,
$$
\left(\begin{array}{cccc}
1 & 0 & 0 & 0 \\
0 & -1 & 0 & 0 \\
0 & 0 & -1 & 0 \\
0 & 0 & 0 & 1
\end{array}\right) \in \mathrm{Aut}(\mathfrak{gl}_2) .
$$
Then it is sufficient to consider the family $J_{1,\mu}$.
The only non-trivial automorphism of $(\mathfrak{gl}_2,J_{1,\mu})$ is
\begin{equation}\label{aut_gl2}
\left(\begin{array}{cccc}
-1 & 0 & 0 & 0 \\
0 & 0 & -1 & 0 \\
0 & -1 & 0 & 0 \\
0 & 0 & 0 & 1
\end{array}\right) \in \mathrm{Aut}(\mathfrak{gl}_2) .
\end{equation}
The generic lcs structure is
$$ \theta = \theta_{4}  e^{4},
\quad
\Omega = \omega_{12}  e^{12} + \omega_{13}  e^{13} - \omega_{23} \theta_{4}  e^{14} + \omega_{23}  e^{23} - \frac{1}{2} \, \omega_{12} \theta_{4}  e^{24} + \frac{1}{2} \, \omega_{13} \theta_{4}  e^{34} , $$
with $( \omega_{12} \omega_{13} - \omega_{23}^{2} ) \theta_{4} \neq 0$.

Assuming $\Omega$ is $J_{1,\mu}$-invariant we have two cases. Indeed, the condition reduces to the system
$$ \left(\begin{matrix}
\mu_1+\theta_4 & -\mu_1-\theta_4 & -2\mu_2 \\
\mu_2& -\mu_2 & -2(\mu_1^2+\mu_2^2)-2\mu_1\theta_4 \\
\mu_2&-\mu_2&2\mu_1+2\theta_4 \\
(\mu_1^2+\mu_2^2)+\mu_1\theta_4 & -(\mu_1^2+\mu_2^2)-\mu_1\theta_4&-2\mu_2\theta_4\end{matrix}\right)
\cdot \left(\begin{matrix}\omega_{12}\\\omega_{13}\\\omega_{23}\end{matrix}\right)
= \left(\begin{matrix}0\\0\\0\\0\end{matrix}\right).$$

We consider first when $\mu_{2}=0$ and $\theta_{4}= -\mu_{1}$, when the rank of the above $4\times3$ matrix is zero.
In this case, $\Omega$ is always $J_{1,\mu}$-invariant.
The $J_{1,\mu}$-positivity of the lcs structure forces $\omega_{12} > 0$, $\omega_{13} > 0$, and $\omega_{12}\omega_{13}-\omega_{23}^2 > 0$. And the general lck structure is 
$$\left\{
\begin{array}{l}
\theta = -\mu_{1}  e^{4} \\
\Omega = \omega_{12}  e^{12} + \omega_{13}  e^{13} + \omega_{23} \mu_{1}  e^{14} + \omega_{23}  e^{23} + \frac{1}{2} \, \omega_{12} \mu_{1}  e^{24} - \frac{1}{2} \, \omega_{13} \mu_{1}  e^{34} \\
\text{with } \omega_{12}>0, \omega_{13} > 0, \omega_{12}\omega_{13}-\omega_{23}^2 > 0
\end{array}\right..$$
Applying the only non-trivial automorphism we can assume that $ \omega_{12}\geq \omega_{13}$ and  $\omega_{23}\geq0$:
\begin{equation}\label{eq:gl2-A}
\left\{
\begin{array}{l}
\theta = -\mu_{1}  e^{4} \\
\Omega = \omega_{12}  e^{12} + \omega_{13}  e^{13} + \omega_{23} \mu_{1}  e^{14} + \omega_{23}  e^{23} + \frac{1}{2} \, \omega_{12} \mu_{1}  e^{24} - \frac{1}{2} \, \omega_{13} \mu_{1}  e^{34} \\
\text{with } \omega_{12}\geq\omega_{13} > 0, \omega_{23}\geq0, \omega_{12}\omega_{13}-\omega_{23}^2 > 0
\end{array}\right.,
\end{equation}
see also \cite[Theorem 4.9, item (ii)]{achk}.

In the other case, when $\mu_{2}\neq0$ or $\theta_{4}\neq -\mu_{1}$, the rank of the above $4\times3$ matrix is two.
The compatibility of the lcs with the complex structure $J_{1,\mu}$ forces $\omega_{23}=0$ and $\omega_{12}=\omega_{13}$, with $\omega_{12} > 0$, $\frac{\theta_4}{\mu_1} < 0$. 
Summarizing, up to equivalence, the lcK structure on $(\mathfrak{gl}_2,J_{1,\mu})$ is of the form, see \cite[Theorem 4.9, item (i)]{achk},
\begin{equation}\label{eq:gl2-B}
\left\{
\begin{array}{l}
\theta = \theta_{4}  e^{4} \\
\Omega = \omega_{12}  e^{12} + \omega_{12}  e^{13} - \frac{1}{2} \, \omega_{12} \theta_{4}  e^{24} + \frac{1}{2} \, \omega_{12} \theta_{4}  e^{34} \\
\text{with } \omega_{12}>0, \frac{\theta_4}{\mu_1} < 0 , \mu_2^2+(\theta_4+\mu_1)^2\neq0
\end{array}\right..
\end{equation}
And there is no further reduction since the non-trivial automorphism fixes the lck structure.

\begin{rmk}
Among the above lcK structures, the ones being Vaisman are specified in \cite[Theorem 4.9]{achk}. More precisely, lcK structures of type \eqref{eq:gl2-A} are Vaisman if and only if $\omega_{23}=0$ and $\omega_{12}=\omega_{13}$; and lcK structures of type \eqref{eq:gl2-B} are always Vaisman. In other words, Vaisman structures are
$$
\left\{
\begin{array}{l}
\theta = \theta_{4}  e^{4} \\
\Omega = \omega_{12}  e^{12} + \omega_{12}  e^{13} - \frac{1}{2} \, \omega_{12} \theta_{4}  e^{24} + \frac{1}{2} \, \omega_{12} \theta_{4}  e^{34} \\
\text{with } \omega_{12}>0, \frac{\theta_4}{\mu_1} < 0
\end{array}\right..
$$
\end{rmk}

\subsubsection{$\mathfrak u_2$}
Consider the Lie algebra
$$ \mathfrak{u}_2 = \mathfrak{su}_2\oplus\mathbb R = (23, -13, 12, 0) , $$
where $e_4$ is a generator of $\mathbb R$.
Left-invariant complex structures are described in \cite[Proposition 4.4]{achk}: they depend on two-parameters $a \in \mathbb R$ and $b \in \mathbb R \setminus\{0\}$ and they are defined by
$$ J_{a,b} e_4 = b e_1 + a e_4, \qquad J_{a,b} e_3 = e_2 , $$
namely, with respect to the dual coframe, it is associated to the matrix
$$ J_{a,b} = \left( \begin{matrix}
a & 0 & 0 & \frac{a^{2} + 1}{b} \\
0 & 0 & 1 & 0 \\
0 & -1 & 0 & 0 \\
-b & 0 & 0 & -a
\end{matrix}\right) \in \mathrm{End}(\mathfrak{u}_2^\vee) . $$
The automorphisms of $(\mathfrak{u}_2,J_{a,b})$ are of the form
$$
\left(\begin{array}{cccc}
1 & 0 & 0 & 0 \\
0 & a_{22} & a_{23} & 0 \\
0 & -a_{23} & a_{22} & 0 \\
0 & 0 & 0 & 1
\end{array}\right) \in \mathrm{Aut}(\mathfrak{u}_2)
$$
with the condition $a_{22}^2 + a_{23}^2 = 1$.

The generic lcs structure is
$$ \theta=\theta_{4}  e^{4}, \qquad
\Omega=\omega_{12}  e^{12} + \omega_{13}  e^{13} + \omega_{23} \theta_{4}  e^{14} + \omega_{23}  e^{23} - \omega_{13} \theta_{4}  e^{24} + \omega_{12} \theta_{4}  e^{34} $$
with $\left( \omega_{12}^{2} + \omega_{13}^{2} + \omega_{23}^{2} \right)\theta_{4} \neq0$.

By imposing the $J_{a,b}$-invariance, we get the condition
$$ \left(\begin{matrix}
b+\theta_4 & a\theta_{4} \\\theta_{4}a & -(b+\theta_4) \end{matrix}\right) \cdot \left(\begin{matrix}
\omega_{12} \\ \omega_{13} \end{matrix} \right) =
\left(\begin{matrix} 0 \\ 0 \end{matrix}\right) . $$

Assuming $\Omega$ to be $J_{a,b}$-positive we obtain $\omega_{23}<0, \frac{\theta_4}{b} > 0$. In particular $b\neq -\theta_{4}$, and therefore the $J_{a,b}$-invariance implies that $\omega_{12}=\omega_{13}=0$. Summarizing we reduce to the generic lcK structure, see \cite[Theorem 4.6, item (i)]{achk},
\begin{equation}\label{eq:lck-u2}
\left\{ \begin{array}{l}
\theta=\theta_4 e^4 \\
\Omega=\omega_{23}\theta_4 e^{14}+\omega_{23}e^{23} \\
\text{with } \theta_4 \not\in \{0, -\frac{1}{b}\}, \omega_{23}<0, \frac{\theta_4}{b} > 0
\end{array}\right.,
\end{equation}
and no further reduction is possible since a possible automorphism fixs the lcK structure.

\begin{rmk}
Among the above lcK structures, the ones being Vaisman are specified in \cite[Theorem 4.6]{achk}. More precisely, all the lcK structures in \eqref{eq:lck-u2} are of Vaisman type.
\end{rmk}

\section{Proof of Theorem \ref{thm:classification-lck}: lcK structures on four-dimensional solvable Lie algebras}\label{sec:proof-class-lck}
	
\subsection{$\mathfrak{rh}_{3}$}
Consider the Lie algebra $\mathfrak{rh}_3=(0,0,-12,0)$, namely, $(e^1,e^2,e^3,e^4)$ is a coframe of $1$-forms such that
$$ de^1 = 0, \quad de^2 = 0, \quad de^3 = -e^1\wedge e^2, \quad de^4=0 . $$
Equivalently, in terms of the dual frame $(e_1, e_2, e_3, e_4)$ for $\mathfrak{rh}_3$, we have the structure equations
$$ [e_1,e_2]=e_3, \quad [e_1,e_3]=0, \quad [e_1,e_4]=0, $$
$$ [e_2,e_3]=0, \quad [e_2,e_4]=0, \quad [e_3,e_4]=0. $$

According to \cite{ovando}, there is only one complex structure up to linear equivalence.
In terms of the frame for $\mathfrak{rh}_3$, it is given by specifying the $(-\sqrt{-1})$-eigenspace to be $\langle e_1 + \sqrt{-1}\,e_2, e_3 + \sqrt{-1}\, e_4\rangle$; namely, $Je_1=e_2$, $Je_2=-e_1$, $Je_3=e_4$, $Je_4=-e_3$. On $\mathfrak{rh}_3^\vee$, we set the linear complex structure $J\in\mathrm{End}(\mathfrak{rh}_3^\vee)$ by $J\alpha:=\alpha(J^{-1}\_)$. Then, in terms of the coframe above, we have $Je^1=e^2$, $Je^2=-e^1$, $Je^3=e^4$, $Je^4=-e^3$, that is, $J$ is given by the matrix
$$
J=\left(\begin{array}{cccc}
0 & -1 & 0 & 0 \\
1 & 0 & 0 & 0 \\
0 & 0 & 0 & -1 \\
0 & 0 & 1 & 0
\end{array}\right) \in\mathrm{End}(\mathfrak{rh}_3^\vee) .
$$

As in \cite[Appendix 6.1 of the arXiv version, page 28]{angella-bazzoni-parton}, by requiring $d\theta=0$, $d\Omega-\theta\wedge\Omega=0$, and $\Omega\wedge\Omega\neq0$, we get that the generic (non-symplectic) lcs structure is of the form
$$ \left\{\begin{array}{l}
\theta = \theta_1 e^1+ \theta_2 e^2 + \theta_4 e^4 \\
\Omega = \left( -\frac{\omega_{24} \theta_{1} - \omega_{14} \theta_{2} + \omega_{34}}{\theta_{4}} \right)  e^{12} - \frac{\omega_{34} \theta_{1}}{\theta_{4}}  e^{13} + \omega_{14}  e^{14} - \frac{\omega_{34} \theta_{2}}{\theta_{4}}  e^{23} + \omega_{24}  e^{24} + \omega_{34}  e^{34} \\
\text{with } \theta_4\neq0, \omega_{34}\neq0
\end{array}\right. . $$

We impose now $\Omega$ to be $J$-invariant, (namely, $J\omega:=\omega(J\_,J\_)=\omega$,) and $J$-positive, (namely, $\omega(x,Jx)>0$ for any $x\in\mathfrak{rh}_3\setminus\{0\}$).
The $J$-invariance forces $\omega_{24} = -\frac{\omega_{34}\theta_1}{\theta_4}$ and $\omega_{14} = \frac{\omega_{34}\theta_2}{\theta_4}$, namely, we get
$$ \Omega = \left( \frac{\theta_{1}^{2} + \theta_{2}^{2} - \theta_{4}}{\theta_{4}^{2}} \right)\omega_{34}   e^{12} - \frac{\omega_{34} \theta_{1}}{\theta_{4}}  e^{13} + \frac{\omega_{34} \theta_{2}}{\theta_{4}}  e^{14} - \frac{\omega_{34} \theta_{2}}{\theta_{4}}  e^{23} - \frac{\omega_{34} \theta_{1}}{\theta_{4}}  e^{24} + \omega_{34}  e^{34} .$$
In our case, for the $J$-positivity, it suffices to check that $\omega(e_1,Je_1)=\left( \frac{\theta_{1}^{2} + \theta_{2}^{2} - \theta_{4}}{\theta_{4}^{2}} \right)\omega_{34}>0$ and $\omega(e_3,Je_3)=\omega_{34}>0$ and $\frac{1}{2}\omega^2(e_1,Je_1,e_3,Je_3)=-\frac{\omega_{34}^2}{\theta_4}>0$.) We get that the generic lcK structure is of the form
$$ \left\{\begin{array}{l}
\theta = \theta_1 e^1+ \theta_2 e^2 + \theta_4 e^4 \\
\Omega = \left( \frac{\theta_{1}^{2} + \theta_{2}^{2} - \theta_{4}}{\theta_{4}^{2}} \right)\omega_{34}   e^{12} - \frac{\omega_{34} \theta_{1}}{\theta_{4}}  e^{13} + \frac{\omega_{34} \theta_{2}}{\theta_{4}}  e^{14} - \frac{\omega_{34} \theta_{2}}{\theta_{4}}  e^{23} - \frac{\omega_{34} \theta_{1}}{\theta_{4}}  e^{24} + \omega_{34}  e^{34} \\
\text{with }	\omega_{34}>0, \theta_4<0
\end{array}\right. . $$

The generic complex automorphism of $(\mathfrak{rh}_3^\vee,J)$ are given, with respect to the chosen coframe, by
$$
\left(\begin{array}{cccc}
a_{11} & a_{12} & a_{13} & a_{14} \\
-a_{12} & a_{11} & -a_{14} & a_{13} \\
0 & 0 & a_{11}^{2} + a_{12}^{2} & 0 \\
0 & 0 & 0 & a_{11}^{2} + a_{12}^{2}
\end{array}\right) \in \mathrm{Aut}(\mathfrak{h}_3^\vee)
$$
with the condition
$$ a_{11}^{2} + a_{12}^{2} \neq 0 . $$

First, we apply the automorphism with parameters $a_{11}=0$, $a_{12}=1$, $a_{13}=\frac{\theta_{1}}{\theta_{4}}$, and $a_{14}=-\frac{\theta_{2}}{\theta_{4}}$. This reduces the lcK structure to $\theta=\theta_{4}  e^{4}$ and $\Omega=-\frac{\omega_{34}}{\theta_{4}}  e^{12} + \omega_{34}  e^{34}$, where $\omega_{34}<0$ and $\theta_4<0$.
Then we apply the automorphism with parameters $a_{12}=\sqrt{-\frac{1}{\theta_{4}}}$, the others zero, so to transform the generic lcK form in (we set $\sigma=\frac{\omega_{34}}{\theta_{4}^{2}}$)
$$ \left\{\begin{array}{l}
\theta = -e^{4} \\
\Omega = \sigma e^{12} + \sigma e^{34} \\
\text{with }	\sigma > 0
\end{array}\right. . $$
It is easy to see that such forms cannot be further reduced, since the generic automorphism transforms $\theta$ as $-a_{14}e^1 - a_{13}e^2 + (-a_{11}^2 - a_{12}^2)e^4$, and correspondingly the coefficient of $\Omega$ along $e^{12}$ as $\left( {\left(a_{11}^{2} + a_{12}^{2} + a_{13}^{2} + a_{14}^{2}\right)} \sigma \right)$.

\begin{rmk}
We determine now which of these lcK structures on $\mathfrak{rh}_{3}$ are of Vaisman type. Let $A=a_1e_1+a_2e_2+a_3e_3+a_4e_4$. We determine $a_i$ such that $\theta(A)=1$ and $A \in (\ker \theta)^\perp$, that is, $\Omega(A,Jx)=0$ for any $x\in \ker \theta$. In this case $\ker \theta$ is generated by $\{e_1,e_2,e_3\}$ and we obtain that 
	$A=-e_4\in \mathcal{Z}(\mathfrak g)$ and $\operatorname{ad}_A=0$. Therefore, it follows from Lemma \ref{Vaisman} that all the lcK structures above are of Vaisman type.
\end{rmk}

\begin{rmk}
We observe that the Morse-Novikov cohomology with respect to $\theta=-e^4$ vanishes in any degree.
\end{rmk}

\subsection{$\mathfrak{rr}_{3,0}$}
Consider the Lie algebra $\mathfrak{rr}_{3,0} = (0,-12, 0, 0)$ with the complex structure defined as
$$
J = \left(\begin{array}{cccc}
0 & -1 & 0 & 0 \\
1 & 0 & 0 & 0 \\
0 & 0 & 0 & -1 \\
0 & 0 & 1 & 0
\end{array}\right) \in \mathrm{End}(\mathfrak{rr}^\vee_{3,0})
$$
in terms of the chosen coframe.

According to \cite[Appendix 6.3 of the arXiv version, pages 31--32]{angella-bazzoni-parton}, the generic lcs structures fall in two different families.

We first consider the case when the generic closed $1$-form $\theta=\theta_1 e^1+\theta_3e^3+\theta_4e^4$ has $\theta_3=\theta_4=0$. Then the generic lcs structure with Lee form $\theta$ is
$$ \left\{\begin{array}{l}
\theta = -e^1 \\
\Omega = \omega_{12} e^{12} + \omega_{13} e^{13} + \omega_{14} e^{14} + \omega_{23} e^{23} + \omega_{24} e^{24} \\
\text{with } \omega_{14}\omega_{23}-\omega_{13}\omega_{24}\neq0
\end{array}\right. . $$
It is clear that such a form is never $J$-positive: indeed, with respect to the dual frame $(e_1,e_2,e_3,e_4)$, we have $\omega(e_3, Je_3)=0$. Then, there is no lcK structure in this case.

Consider now the case $\theta_3^2 + \theta_4^2 \neq 0$. The generic complex automorphisms $(\mathfrak{rr}_{3,0}, J)$ are given, with respect to the chosen coframe, by
\[\left(\begin{array}{cccc}
1 & 0 & 0 & 0 \\
0 & 1 & 0 & 0 \\
0 & 0 & a_{33} & a_{34} \\
0 & 0 & -a_{34} & a_{33}
\end{array}\right) \in \mathrm{Aut}(\mathfrak{rr}_{3,0}^\vee) , \qquad \text{ with } a_{33}^{2} + a_{34}^{2}\neq 0.\]
In particular, the complex automorphism
$$
\left(\begin{array}{cccc}
1 & 0 & 0 & 0 \\
0 & 1 & 0 & 0 \\
0 & 0 & 0 & 1 \\
0 & 0 & -1 & 0
\end{array}\right) \in \mathrm{Aut}(\mathfrak{rr}_{3,0}^{\vee})
$$
transforms $\theta_{1}  e^{1} + \theta_{3}  e^{3} + \theta_{4}  e^{4}$ to $\theta_{1}  e^{1} + \theta_{4}  e^{3} - \theta_{3}  e^{4}$. So, without loss of generality, we can assume $\vartheta_3\neq0$. The generic lcs structure in this case reduces to
$$ \left\{\begin{array}{l}
\theta = \theta_1e^1+\theta_3e^3+\theta_4e^4 \\
\Omega = \left( -\frac{(\theta_1+1)\omega_{23}}{\theta_{3}} \right)  e^{12} + \omega_{13}  e^{13} + \left( \frac{\omega_{34} \theta_{1} + \omega_{13} \theta_{4}}{\theta_{3}} \right)  e^{14} + \omega_{23}  e^{23} + \frac{\omega_{23} \theta_{4}}{\theta_{3}}  e^{24} + \omega_{34}  e^{34} \\
\text{with } \theta_3\neq 0, \omega_{23} \omega_{34}\neq0
\end{array}\right. . $$
The $J$-invariance requires $\omega_{13} = -\frac{\omega_{34} \theta_{1} \theta_{4}}{\theta_{3}^{2} + \theta_{4}^{2}}$ and $\omega_{23} = -\frac{\omega_{34} \theta_{1} \theta_{3}}{\theta_{3}^{2} + \theta_{4}^{2}}$. The $J$-positivity requires $\omega_{34} > 0$ and $\theta_{1}>0$. Then the generic lcK structure is
$$ \left\{\begin{array}{l}
\theta = \theta_{1}  e^{1} + \theta_{3}  e^{3} + \theta_{4}  e^{4} \\
\Omega = \left( \frac{\omega_{34} {\left(\theta_{1} + 1\right)} \theta_{1}}{\theta_{3}^{2} + \theta_{4}^{2}} \right)  e^{12} + \left( -\frac{\omega_{34} \theta_{1} \theta_{4}}{\theta_{3}^{2} + \theta_{4}^{2}} \right)  e^{13} \\
+ \left( \frac{\omega_{34} \theta_{1} \theta_{3}}{\theta_{3}^{2} + \theta_{4}^{2}} \right)  e^{14} + \left( -\frac{\omega_{34} \theta_{1} \theta_{3}}{\theta_{3}^{2} + \theta_{4}^{2}} \right)  e^{23} + \left( -\frac{\omega_{34} \theta_{1} \theta_{4}}{\theta_{3}^{2} + \theta_{4}^{2}} \right)  e^{24} + \omega_{34}  e^{34} \\
\text{with }	\theta_1>0, \theta_3\neq0, \omega_{34} > 0
\end{array}\right. . $$
Applying the complex automorphism
\[\left(\begin{array}{cccc}
1 & 0 & 0 & 0 \\
0 & 1 & 0 & 0 \\
0 & 0 & \frac{\omega_{34} \theta_{1} \theta_{3}}{\theta_{3}^{2} + \theta_{4}^{2}} & \frac{\omega_{34} \theta_{1} \theta_{4}}{\theta_{3}^{2} + \theta_{4}^{2}} \\
0 & 0 & -\frac{\omega_{34} \theta_{1} \theta_{4}}{\theta_{3}^{2} + \theta_{4}^{2}} & \frac{\omega_{34} \theta_{1} \theta_{3}}{\theta_{3}^{2} + \theta_{4}^{2}}
\end{array}\right)
\in \mathrm{Aut}(\mathfrak{rr}_{3,0}^\vee)
\]
we reduce the lck pair $(\Omega, \theta)$ to 
$$ \left\{\begin{array}{l}
\theta= \theta_{1}  e^1 + \omega_{34} \theta_{1}  e^{3} \\
\Omega=\left( \omega_{34} \theta_{1} \frac{ \theta_{1} + 1}{\theta_{3}^{2} + \theta_{4}^{2}} \right)  e^{12}  + \left( \frac{\omega_{34}^{2} \theta_{1}^{2}}{\theta_{3}^{2} + \theta_{4}^{2}} \right)  e^{14} + \left( -\frac{\omega_{34}^{2} \theta_{1}^{2}}{\theta_{3}^{2} + \theta_{4}^{2}} \right)  e^{23} + \left( \frac{\omega_{34}^{3} \theta_{1}^{2}}{\theta_{3}^{2} + \theta_{4}^{2}} \right)  e^{34} \\
\text{with } \theta_{1}>0, \theta_{3}\neq0, \omega_{34}>0
\end{array}\right. . $$
Now we apply the complex automorphism
\[
\left(\begin{array}{cccc}
1 & 0 & 0 & 0 \\
0 & 1 & 0 & 0 \\
0 & 0 & \frac{\theta_{3}^{2} + \theta_{4}^{2}}{\omega_{34}^{2} \theta_{1}^{2}} & 0 \\
0 & 0 & 0 & \frac{\theta_{3}^{2} + \theta_{4}^{2}}{\omega_{34}^{2} \theta_{1}^{2}}
\end{array}\right)
\in \mathrm{Aut}(\mathfrak{rr}_{3,0}^\vee)
\]
and we obtain the lck structure
$$ \left\{\begin{array}{l}
\theta= \delta  e^{1} +  \frac{\sigma}{\delta}  e^{3} \\
\Omega=\frac{ \delta(\delta +1) }{\sigma}   e^{12} +  e^{14} -  e^{23} +  \frac{\sigma}{\delta^2}  e^{34} \\
\text{with } \delta>0, \sigma>0
\end{array}\right., $$
where we denoted $\sigma=\frac{\theta_{3}^2+\theta_{4}^2}{\omega_{34}}$ and $\delta=\theta_{1}$.
This lck structure cannot be further reduced. Indeed, the generic automorphism transforms the coefficient of $\theta$ along $e^4$ to $-a_{34}\frac{\sigma}{\delta}$, whence we chose $a_{34} = 0$. The coefficient of $\Omega$ along $e^{14}$ is transformed to $a_{33}$: we then choose $a_{33}=1$, getting the identity.

\begin{rmk}
We determine now which of these lcK structures on $\mathfrak{rr}_{3,0}$ are of Vaisman type. Let $A=a_1e_1+a_2e_2+a_3e_3+a_4e_4$. We determine $a_i$ such that $\theta(A)=1$ and $A \in (\ker \theta)^\perp$, that is, $\Omega(A,Jx)=0$ for any $x\in \ker \theta$. In this case we obtain that 
$A=\frac{\delta}{\sigma}e_3\in \mathcal{Z}(\mathfrak g)$. Therefore, it follows from Lemma \ref{Vaisman} that all the lcK structures above are of Vaisman type.
\end{rmk}

\subsection{$\mathfrak{rr}_{3,1}$}
Consider the Lie algebra $\mathfrak{rr}_{3,1} = (0,-12,-13,0)$. Consider the complex structure associated, in the chosen coframe, to the matrix
$$
J =
\left(\begin{array}{cccc}
0 & 0 & 0 & -1 \\
0 & 0 & 1 & 0 \\
0 & -1 & 0 & 0 \\
1 & 0 & 0 & 0
\end{array}\right)
\in \mathrm{Aut}(\mathfrak{rr}_{3,1}^\vee).
$$

The generic lcs structures are the following: either
$$
\left\{\begin{array}{l}
\theta = -e^1 \\
\Omega = \omega_{12} e^1 \wedge e^2 + \omega_{13} e^1 \wedge e^3 + \omega_{14} e^1 \wedge e^4 + \omega_{24} e^2 \wedge e^4 + \omega_{34} e^3 \wedge e^4 \\
\text{with } \omega_{12}\omega_{34}-\omega_{13}\omega_{24}\neq0
\end{array}\right. $$
or
$$
\left\{\begin{array}{l}
\theta = -2e^1 \\
\Omega = \omega_{12} e^1 \wedge e^2 + \omega_{13} e^1 \wedge e^3 + \omega_{14} e^1 \wedge e^4 + \omega_{23} e^2 \wedge e^3 \\
\text{with } \omega_{14}\omega_{23}\neq0
\end{array}\right. .
$$

There is no lcK structure with Lee form $\theta=-e^1$. Indeed, the corresponding lcs structures are never $J$-positive, since $\Omega(e^2,Je^2)=0$.

We consider lcK structures with Lee form $\theta=-2e^1$. The $J$-invariance of $\Omega$ requires $\omega_{12}=0$ and $\omega_{13}=0$. Therefore we are reduced to $\Omega=\omega_{14}e^{14} + \omega_{23}e^{23}$. The $J$-positivity requires $\omega_{14}>0$ and $\omega_{23}<0$. Finally, the generic lcK structure is
$$
\left\{\begin{array}{l}
\theta = -2e^1 \\
\Omega = \omega_{14} e^1 \wedge e^4 + \omega_{23} e^2 \wedge e^3 \\
\text{with } \omega_{14}>0, \omega_{23}<0
\end{array}\right. .
$$

The generic automorphisms of $(\mathfrak{rr}_{3,1}, J)$ are associated to
$$
\left(\begin{array}{cccc}
1 & 0 & 0 & 0 \\
0 & a_{22} & a_{23} & 0 \\
0 & -a_{23} & a_{22} & 0 \\
0 & 0 & 0 & 1
\end{array}\right) \in \mathrm{Aut}(\mathfrak{rr}_{3,1}^\vee), \qquad \text{with } a_{22}^{2} + a_{23}^{2}\neq0.
$$
For $a_{22}=\frac{1}{\sqrt{-\omega_{23}}}$ and $a_{23}=0$, we apply the automorphism
$$
\left(\begin{array}{cccc}
1 & 0 & 0 & 0 \\
0 & \frac{1}{\sqrt{-\omega_{23}}} & 0 & 0 \\
0 & 0 & \frac{1}{\sqrt{-\omega_{23}}} & 0 \\
0 & 0 & 0 & 1
\end{array}\right) \in \mathrm{Aut}(\mathfrak{rr}_{3,1}^\vee)
$$
to get the normal form
$$
\left\{\begin{array}{l}
\theta = -2e^1 \\
\Omega = \sigma e^1\wedge e^4 - e^2\wedge e^3 \\
\text{with } \sigma > 0
\end{array}\right. .$$

There is no further reduction, since the generic linear complex automorphism transforms $\sigma e^{14} - e^{23}$ into $\sigma  e^{14} - \left( a_{22}^{2} + a_{23}^{2} \right)  e^{23}$, fixing the coefficient along $e^{14}$.

\begin{rmk}\label{rmk:no-vaisman-on-algebra}
The Lie algebras $\mathfrak{rr}_{3,1}$, $\mathfrak{rr}^{\prime}_{3,\gamma}$ with $\gamma\geq0$, $\mathfrak{r}^\prime_2$, $\mathfrak{r}_{4,1}$, $\mathfrak{r}_{4,\alpha,\beta}$ with $\alpha$ and $\beta$ as in Table \ref{table:complex}, $\mathfrak{r}^\prime_{4,\gamma,\delta}$ with $\gamma\in\mathbb R$ and $\delta>0$, do not admit any Vaisman structure, thanks to \cite[Structure Theorem]{ornea-verbitsky-MRL}, \cite[Corollary 3.5]{ornea-verbitsky-JGP}, and to the classification of $3$-dimensional Sasaki Lie algebras, see \cite{cho-chun, geiges}.
\end{rmk}

\subsection{$\mathfrak{rr}'_{3,0}$}
Consider the Lie algebra $\mathfrak{rr}'_{3,0}=(0,-13,12,0)$, endowed with the complex structure associated to the matrix
$$
J=
\left(\begin{array}{cccc}
0 & 0 & 0 & -1 \\
0 & 0 & -1 & 0 \\
0 & 1 & 0 & 0 \\
1 & 0 & 0 & 0
\end{array}\right)
\in \mathrm{Aut}((\mathfrak{rr}_{3,0}^{\prime})^\vee) .
$$
The generic lcs structure is
$$
\left\{\begin{array}{l}
\theta = \theta_{1}  e^{1} + \theta_{4}  e^{4} \\
\Omega = \omega_{12}  e^{12} + \omega_{13}  e^{13} + \omega_{14}  e^{14} + \left( -\frac{{\left(\omega_{12} \theta_{1} + \omega_{13}\right)} \theta_{4}}{\theta_{1}^{2} + 1} \right)  e^{24} + \left( -\frac{{\left(\omega_{13} \theta_{1} - \omega_{12}\right)} \theta_{4}}{\theta_{1}^{2} + 1} \right)  e^{34} \\
\text{with } {\left(\omega_{12}^{2} + \omega_{13}^{2}\right)} \theta_{4} \neq 0
\end{array}\right. .$$
It is clear that $\Omega$ is never $J$-positive: indeed $\omega(e_2, Je_2)=0$. Then, there is no lcK structure in this case.

\subsection{$\mathfrak{rr}'_{3,\gamma}$ with $\gamma>0$}
Consider the Lie algebra $\mathfrak{rr}'_{3,\gamma}=(0,-\gamma 12-13,12-\gamma 13,0)$. It admits two non-equivalent complex structures.

We first consider the complex structure $J_1$ associated to the matrix
$$
J_1=
\left(\begin{array}{cccc}
0 & 0 & 0 & -1 \\
0 & 0 & -1 & 0 \\
0 & 1 & 0 & 0 \\
1 & 0 & 0 & 0
\end{array}\right)
\in \mathrm{Aut}((\mathfrak{rr}_{3,\gamma}^{\prime})^\vee)
$$
According to \cite[Appendix 6.4 of the arXiv version, pages 36--37]{angella-bazzoni-parton}, the generic lcs structures are the following: either
$$
\left\{\begin{array}{l}
\theta = -2\gamma e^1 \\
\Omega = \omega_{12} e^1 \wedge e^2 + \omega_{13} e^1 \wedge e^3 + \omega_{14} e^1 \wedge e^4 + \omega_{23} e^2 \wedge e^{3} \\
\text{with } \omega_{14}\omega_{23}\neq0, \gamma \neq 0
\end{array}\right. $$
or
$$
\left\{\begin{array}{l}
\theta = \theta_{1}  e^{1} + \theta^{4}  e_{4} \\
\Omega = \omega_{12}  e^{12} + \omega_{13}  e^{13} + \omega_{14}  e^{14} + \left( -\frac{{\left(\omega_{12} (\gamma + \theta_{1}) + \omega_{13}\right)} \theta_{4}}{(\gamma+\theta_{1})^{2} + 1} \right)  e^{24} + \left( -\frac{{\left(\omega_{13} (\gamma+\theta_{1}) - \omega_{12}\right)} \theta_{4}}{(\gamma+\theta_{1})^{2} + 1} \right)  e^{34} \\
\text{with } {\left(\omega_{12}^{2} + \omega_{13}^{2}\right)} \theta_{4} \neq 0
\end{array}\right. .$$

There is no lcK structure with Lee form $\theta=\theta_{1}  e^{1} + \theta^{4}  e_{4}$. Indeed, the corresponding lcs structures are never $J_1$-positive, since $\Omega(e^2,Je^2)=\omega_{23}=0$.

We consider lcK structures with Lee form $\theta=-2\gamma e^1$. The $J_1$-invariance of $\Omega$ requires $\omega_{12}=0$ and $\omega_{13}=0$. Therefore we are reduced to $\Omega=\omega_{14}e^{14} + \omega_{23}e^{23}$. The $J_1$-positivity requires $\omega_{14}>0$ and $\omega_{23}>0$. Finally, the generic lcK structure is
$$
\left\{\begin{array}{l}
\theta = -2\gamma e^1 \\
\Omega = \omega_{14} e^1 \wedge e^4 + \omega_{23} e^2 \wedge e^3 \\
\text{with } \omega_{14}>0, \omega_{23}>0, \gamma \neq 0 
\end{array}\right. .
$$

The generic automorphisms of $(\mathfrak{rr}_{3,\lambda}, J_1)$ are associated to
$$
\left(\begin{array}{cccc}
1 & 0 & 0 & 0 \\
0 & a_{22} & a_{23} & 0 \\
0 & -a_{23} & a_{22} & 0 \\
0 & 0 & 0 & 1
\end{array}\right) \in \mathrm{Aut}((\mathfrak{rr}_{3,\gamma}^{\prime})^\vee) , \qquad \text{with } a_{22}^{2} + a_{23}^{2}\neq0.
$$
For $a_{22}=\frac{1}{\sqrt{\omega_{23}}}$ and $a_{23}=0$, we apply the automorphism
$$
\left(\begin{array}{cccc}
1 & 0 & 0 & 0 \\
0 & \frac{1}{\sqrt{\omega_{23}}} & 0 & 0 \\
0 & 0 & \frac{1}{\sqrt{\omega_{23}}} & 0 \\
0 & 0 & 0 & 1
\end{array}\right)
\in \mathrm{Aut}((\mathfrak{rr}_{3,\gamma}^{\prime})^\vee)
$$
to get the lcK form
$$
\left\{\begin{array}{l}
\theta = -2\gamma e^1 \\
\Omega = \sigma e^1\wedge e^4 + e^2\wedge e^3 \\
\text{with } \sigma > 0, \gamma \neq 0
\end{array}\right. .$$

There is no further reduction, since the generic linear complex automorphism transforms $\sigma e^{14} + e^{23}$ into $\sigma  e^{14} + \left( a_{22}^{2} + a_{23}^{2} \right)  e^{23}$, fixing the coefficient along $e^{14}$.

Next we consider the complex structure $J_2$ associated to the matrix 
$$
J_2=
\left(\begin{array}{cccc}
0 & 0 & 0 & -1 \\
0 & 0 & 1 & 0 \\
0 & -1 & 0 & 0 \\
1 & 0 & 0 & 0
\end{array}\right)
\in \mathrm{Aut}((\mathfrak{rr}_{3,\gamma}^{\prime})^\vee),
$$
the only difference in this case is that $\Omega(e_2,J_2e_2)=-\omega_{23}>0$, then $\omega_{23}<0$. In the same way as above we get the final lcK form
$$
\left\{\begin{array}{l}
\theta = -2\gamma e^1 \\
\Omega = \sigma e^1\wedge e^4 - e^2\wedge e^3 \\
\text{with } \sigma > 0, \gamma \neq 0
\end{array}\right. .$$

\begin{rmk}
This algebra does not admit any Vaisman structure, see Remark \ref{rmk:no-vaisman-on-algebra}.
\end{rmk}

\subsection{$\mathfrak{r}_2\mathfrak{r}_2$}
Consider the Lie algebra $\mathfrak{r}_2\mathfrak{r}_2 = (0,-12, 0, -34)$ with the complex structure defined as
$$
J = \left(\begin{array}{cccc}
0 & -1 & 0 & 0 \\
1 & 0 & 0 & 0 \\
0 & 0 & 0 & -1 \\
0 & 0 & 1 & 0
\end{array}\right)
\in \mathrm{Aut}((\mathfrak{r}_{2}\mathfrak{r}_2)^\vee)
$$
in terms of the chosen coframe.  A generic automorphism for $(\mathfrak{r}_2\mathfrak{r}_2^\vee,J)$ is associated to the identity matrix or to 
\begin{equation}\label{aut_r2r2}
\left(\begin{array}{cccc}
	0 & 0 & 1 & 0 \\
	0 & 0 & 0 & 1 \\
	1 & 0 & 0 & 0 \\
	0 & 1 & 0 & 0
\end{array}\right) \in \mathrm{Aut}((\mathfrak{r}_{2}\mathfrak{r}_2)^\vee) .
\end{equation}

As in \cite[Appendix 6.5 of the arXiv version, pages 38--39]{angella-bazzoni-parton}, the generic lcs structures are either
$$ \left\{\begin{array}{l}
\theta = \theta_3 e^3 \\
\Omega = -\frac{\omega_{23}}{\theta_3} e^{12} + \omega_{13} e^{13} + \omega_{14} e^{14} + \omega_{23} e^{23} + \omega_{34} e^{34} \\
\text{	with } \omega_{14}(\theta_3 + 1)=0,  \omega_{23}(\omega_{14}\theta_3-\omega_{34})\neq0, \theta_3 \neq 0
\end{array}\right. , $$
or
$$ \left\{\begin{array}{l}
\theta = \theta_1 e^1 \\
 \Omega = \omega_{12} e^{12} + \omega_{13} e^{13} + \omega_{14} e^{14} + \omega_{23} e^{23} + \frac{\omega_{14}}{\theta_1} e^{24} \\
\text{	with }  \omega_{23}(\theta_1 + 1)=0, \omega_{14}(\omega_{12} + \theta_{1}\omega_{23})\neq0, \theta_1 \neq 0
\end{array}\right. , $$
or
$$ \left\{\begin{array}{l}
\theta = -e^1 -e^3\\
\Omega =  \omega_{13} e^{13} + \omega_{14} e^{14} + \omega_{23} e^{23} + \omega_{24} e^{24}  \\
\text{	with } \omega_{13}\omega_{14}- \omega_{14}\omega_{23}\neq0
\end{array}\right. , $$
or 
$$ \left\{\begin{array}{l}
\theta = \theta_{1}e^1 +\theta_{3}e^3\\
\Omega = -\frac{1+\theta_1}{\theta_{3}}\omega_{23}e^{12}+ \omega_{13} e^{13} + \omega_{14} e^{14} + \omega_{23} e^{23} + \frac{\theta_{3}+1}{\theta_{1}} \omega_{14} e^{34}  \\
\text{	with }\theta_{1}\theta_{3}\neq0, \theta_{1}+\theta_{3}\neq -1, \omega_{14}\omega_{23}\neq0
\end{array}\right. . $$

First we consider the case $\theta=\theta_{3}e^3$. The lcK condition yields the generic form
$$ \left\{\begin{array}{l}
\theta = - e^3 \\
\Omega = \omega_{12}(  e^{12} - e^{14} + e^{23} ) +  \omega_{34}  e^{34} \\
\text{	with } \omega_{34}>\omega_{12}>0
\end{array}\right. . $$
	
The only complex automorphism fixing the Lee form is the identity. Whence the above form is the generic lcK form up to linear equivalence.

\begin{rmk}
We determine now which of these lcK structures on $\mathfrak{r}_{2}\mathfrak{r}_2$ with Lee form $\theta=-e^3$ are of Vaisman type. Let $A=a_1e_1+a_2e_2+a_3e_3+a_4e_4$. We determine $a_i$ such that $\theta(A)=1$ and $A \in (\ker \theta)^\perp$. In this case we obtain that 
$A=-e_1+\frac{\omega_{34}}{\omega_{12}}e_2-e_3+\frac{\omega_{34}}{\omega_{12}}$. Therefore
$$ \mathrm{ad}_A = \left(\begin{matrix}
0 & 0 & 0 & 0 \\
-\frac{\omega_{34}}{\omega_{12}} & -1 & 0 & 0 \\
0 & 0 & 0 & 0 \\
0 & 0 & - \frac{\omega_{34}}{\omega_{12}} & -1
\end{matrix}\right).$$
From Lemma \ref{Vaisman}, it follows that none of the lcK structures above are of Vaisman type.
\end{rmk}

We now assume $\theta=\theta_{1}e^1$. Requiring $\Omega$ to be $J$-positive and $J$-invariant, we obtain 
$$ \left\{\begin{array}{l}
\theta = - e^1 \\
\Omega = \omega_{12}e^{12} + \omega_{34}(- e^{14} + e^{23} +  e^{34})  \\
\text{	with }\omega_{12}>\omega_{34}>0
\end{array}\right. . $$
In the same way as above there is no further reduction.

\begin{rmk}
We determine now which of these lcK structures on $\mathfrak{r}_{2}\mathfrak{r}_2$ with Lee form $\theta=-e^1$ are of Vaisman type. Let $A=a_1e_1+a_2e_2+a_3e_3+a_4e_4$. We determine $a_i$ such that $\theta(A)=1$ and $A \in (\ker \theta)^\perp$. In this case we obtain that 
$A=-e_1-e_3$. Therefore
$$ \mathrm{ad}_A = \left(\begin{matrix}
0 & 0 & 0 & 0 \\
0 & -1 & 0 & 0 \\
0 & 0 & 0 & 0 \\
0 & 0 & 0 & -1
\end{matrix}\right).$$
From Lemma \ref{Vaisman}, it follows that none of the lcK structures above are of Vaisman type.
\end{rmk}

If $\theta=-e^1-e^3$ does not produce lcK forms since $\Omega(e_1,Je_1)=0$, therefore $\Omega$ is never $J$-positive.

Finally if $\theta = \theta_{1}e^1 +\theta_{3}e^3$, requiring the lcK conditions we obtain
$$ \left\{\begin{array}{l}
\theta = \sigma e^1 +\tau e^3\\
\Omega = \mu(\frac{1+\sigma}{\tau}e^{12} + e^{14} -e^{23} + \frac{\tau+1}{\sigma} e^{34})  \\
\text{	with }\sigma\tau\neq0, \sigma+\tau\neq -1, \mu\neq0, \frac{\mu(1+\sigma)}{\tau}>0, \frac{\mu(\tau+1)}{\sigma}>0 , \frac{\sigma + \tau + 1}{\sigma \tau} > 0
\end{array}\right. , $$
where $\sigma=\theta_{1}, \tau=\theta_{3}, \mu=\omega_{14}$. Applying the automorphism \eqref{aut_r2r2} we can assume that $\sigma\leq\tau$.

\begin{rmk}
We determine now which of these lcK structures on $\mathfrak{r}_{2}\mathfrak{r}_2$ with Lee form $\theta=\sigma e^1 +\tau e^3$ are of Vaisman type. Let $A=a_1e_1+a_2e_2+a_3e_3+a_4e_4$. We determine $a_i$ such that $\theta(A)=1$ and $A \in (\ker \theta)^\perp$. In this case we obtain that 
$A=\frac{1+2\tau}{\sigma+2\sigma\tau+\tau}e_1 + \frac{1}{\sigma+2\sigma\tau+\tau}e_3$. Therefore
$$ \mathrm{ad}_A = \left(\begin{matrix}
0 & 0 & 0 & 0 \\
0 & \frac{1+2\tau}{\sigma+2\sigma\tau+\tau} & 0 & 0 \\
0 & 0 & 0 & 0 \\
0 & 0 & 0 & \frac{1}{\sigma+2\sigma\tau+\tau}
\end{matrix}\right).$$
From Lemma \ref{Vaisman}, it follows that none of the lcK structures above are of Vaisman type.
\end{rmk}

\subsection{$\mathfrak{r}'_2$}

Consider the Lie algebra $\mathfrak{r}'_2 = (0,0, -13+24, -14-23)$. 

As in \cite[Appendix 6.6 of the arXiv version, pages 43--45]{angella-bazzoni-parton}, the generic lcs structure is either
$$ \left\{\begin{array}{l}
\theta = \theta_1 e^1 + \theta_2 e^2 \\
\Omega = \omega_{12}  e^{12} + \omega_{13}  e^{13} + \left( \frac{\omega_{24} {\left(\theta_{1} + 1\right)} + \omega_{13}}{\theta_{2}} \right)  e^{14} + \left( \frac{{\left(\theta_{2}^{2} + 1\right)} \omega_{13} + \omega_{24} {\left(\theta_{1} + 1\right)}}{{\left(\theta_{1} + 1\right)} \theta_{2}} \right)  e^{23} + \omega_{24}  e^{24} \\
\text{	with } \omega_{13}^2+\omega_{14}^2\neq0 , \theta_1\neq-1, \theta_2\neq0
\end{array}\right. , $$
or
$$ \left\{\begin{array}{l}
\theta = -2 e^1 \\
\Omega = \omega_{12}  e^{12} + \omega_{13}  (e^{13}+e^{24}) + \omega_{14}(e^{14}-e^{23})+\omega_{34}e^{34} \\
\text{	with } \omega_{12} \omega_{34} - \omega_{13}^2-\omega_{14}^2 \neq0, \omega_{34}\neq0
\end{array}\right. , $$
or
$$ \left\{\begin{array}{l}
\theta = \theta_{1} e^1 \\
\Omega = \omega_{12}  e^{12} -(\theta_{1}+1) \omega_{24} e^{13}+(\theta_{1}+1) \omega_{23}e^{14} +\omega_{23}e^{23}+\omega_{24}e^{24} \\
\text{	with } \omega_{23}^2+\omega_{24}^2\neq0, \theta_1 \neq 0
\end{array}\right. . $$

According to \cite{ovando} this Lie algebra admits several different complex structures given by
$$
J_1 = \left(\begin{array}{cccc}
0 & 0 & -1 & 0 \\
0 & 0 & 0 & -1 \\
1 & 0 & 0 & 0 \\
0 & 1 & 0 & 0
\end{array}\right)
\in \mathrm{Aut}((\mathfrak{r}_{2}^\prime)^\vee)
, \quad 
J_2 = \left(\begin{array}{cccc}
-a & -b & 0 & 0 \\
\frac{a^{2} + 1}{b} & a & 0 & 0 \\
0 & 0 & 0 & -1 \\
0 & 0 & 1 & 0
\end{array}\right)
\in \mathrm{Aut}((\mathfrak{r}_{2}^\prime)^\vee)
$$
in terms of the chosen coframe.

We study first the complex structure $J_1$. The only complex automorphisms of $(\mathfrak{r}'_2, J_1)$ are
\begin{equation}\label{aut_r2'}
\left( \begin{array}{cccc}
1 & 0 & 0 & 0 \\
0 & \pm1 & 0 & 0 \\
0 & 0 & 1 & 0 \\
0 & 0 & 0 & \pm1
\end{array}\right)
\in \mathrm{Aut}((\mathfrak{r}_{2}^\prime)^\vee).
\end{equation}

If the lcs structure is as in the first case, then the generic lcK form is
$$ \left\{\begin{array}{l}
\theta = \theta_{1}e^1+\theta_{2}e^2 \\
\Omega = \omega_{13}\left( e^{13} + \frac{\theta_{2}}{\theta_{1}}( e^{14} + e^{23}) +  \frac{\theta_{2}^2-\theta_{1}}{\theta_{1}(\theta_{1}+1)} e^{24}\right) \\
\text{	with } \omega_{13}>0,  \theta_2\neq0, \theta_1+1<0
\end{array}\right. . $$
Applying the automorphism \eqref{aut_r2'} we can assume $\theta_{2}>0$.

We now consider the second case for the lcs form. Requiring $\Omega$ to be $J_1$-positive and $J_1$-invariant we obtain the lcK form
$$ 
\left\{\begin{array}{l}
\theta = -2e^1 \\
\Omega = \omega_{12} ( e^{12} +e^{34}) +  \omega_{13} (e^{13} +  e^{24}) \\
\text{	with } \omega_{13}>0, \omega_{13}^2-\omega_{12}^2>0
\end{array}\right. . $$
Applying the automorphism \eqref{aut_r2'} we can assume $\omega_{12}>0$.

In the last case, the generic lcK form is
$$ \left\{\begin{array}{l}
\theta = \theta_{1}e^1\\
\Omega = \omega_{24}\left( -(\theta_{1}+1)e^{13} + e^{24}\right) \\
\text{	with } \omega_{24}>0, \theta_{1}+1<0
\end{array}\right. . $$
There is no further reduction, since $\Omega$ is fixed by a generic automorphism \eqref{aut_r2'}.

Now we consider the second complex structure $J_2$.

The complex automorphisms of $(\mathfrak{r}'_2, J_2)$ are
\begin{equation}\label{aut_r2'J2}
\left(\begin{array}{cccc}
1 & 0 & 0 & 0 \\
0 & 1 & 0 & 0 \\
a_{13} & -a_{14} & a_{33} & -a_{34} \\
a_{14} & a_{13} & a_{34} & a_{33}
\end{array}\right)
\in \mathrm{Aut}((\mathfrak{r}_{2}^\prime)^\vee),
\end{equation}
with $a_{33}^{2} + a_{34}^{2}\neq0$, and moreover, if $(a,b)\neq(0,1)$, then $a_{13}=0, a_{14}=0$.

By $J_2$-positivity, we are reduced to only one possibility for the lcs structure, namely,
$$ \left\{\begin{array}{l}
\theta = -2 e^1 \\
\Omega = \omega_{12}  e^{12} + \omega_{13}  (e^{13}+e^{24}) + \omega_{14}(e^{14}-e^{23})+\omega_{34}e^{34} \\
\text{	with } \omega_{12} \omega_{34} - \omega_{13}^2-\omega_{14}^2 \neq0, \omega_{34}\neq0
\end{array}\right. . $$

In the general case $(a,b)\neq (0,1)$, the conditions for $J_2$-invariance and $J_2$-positivity yield to the general lcK form
$$ \left\{\begin{array}{l}
\theta = -2 e^1 \\
\Omega = \omega_{12}  e^{12} +\omega_{34}e^{34} \\
\text{	with } b\omega_{12}>0, \omega_{34}>0
\end{array}\right. . $$

Using the automorphism with $a_{33}=\frac{1}{\sqrt{\omega_{34}}}$ and $a_{34}=0$, we get the normal form
$$ \left\{\begin{array}{l}
\theta = -2 e^1 \\
\Omega = \omega_{12}  e^{12} + e^{34} \\
\text{	with } b\omega_{12}>0
\end{array}\right. , $$
and no further reduction is possible.

In the particular case $(a,b)=(0,1)$, the generic lcK structure is
$$ \left\{\begin{array}{l}
\theta = -2 e^1 \\
\Omega = \omega_{12}  e^{12} + \omega_{13}  (e^{13}+e^{24}) + \omega_{14}(e^{14}-e^{23})+\omega_{34}e^{34} \\
\text{	with } \omega_{12} >0, \omega_{34} >0, 
\omega_{12}\omega_{34}-\omega_{13}^2 - \omega_{14}^2  >0
\end{array}\right. . $$

We use the automorphisms with $a_{13}=-\frac{\omega_{14}}{\omega_{34}}$, $a_{14}=\frac{\omega_{13}}{\omega_{34}}$, $a_{33}=\frac{1}{\sqrt{\omega_{34}}}$, and $a_{34}=0$ to get
$$ \left\{\begin{array}{l}
\theta = -2 e^1 \\
\Omega = \sigma  e^{12} + \omega_{34}e^{34} \\
\text{	with } \sigma >0
\end{array}\right. , $$
and no further reduction is possible.

\begin{rmk}
This algebra does not admit any Vaisman structure, see Remark \ref{rmk:no-vaisman-on-algebra}.
\end{rmk}

\subsection{$\mathfrak{r}_{4,1}$}
Consider the Lie algebra $\mathfrak{r}_{4,1} = (14,24+34,34,0)$ with the complex structure defined as
$$
J = \left(\begin{array}{cccc}
0 & -1 & 0 & 0 \\
1 & 0 & 0 & 0 \\
0 & 0 & 0 & 1 \\
0 & 0 & -1 & 0
\end{array}\right)
\in \mathrm{End}(\mathfrak{r}_{4,1}^\vee)
$$
in terms of the chosen coframe.

According to \cite[Appendix 6.9 of the arXiv version, page 51]{angella-bazzoni-parton} the generic lcs structure is
$$
\left\{\begin{array}{l}
\theta = -2e_{4} \\
\Omega = \omega_{13}  e^{13} + \omega_{14} e^{14} +  \omega_{23}e^{23} + \omega_{24} e^{24}+ \omega_{34}e^{34} \\
\text{with }\omega_{13}\omega_{24}-\omega_{14}\omega_{23} \neq 0
\end{array}\right. .$$

It is clear that $\Omega$ is never $J$-positive: indeed $\Omega(e_1, Je_1)=0$. Then, there is no lcK structure for this Lie algebra.

\subsection{$\mathfrak{r}_{4,\alpha,1}$ with $\alpha\neq 0,1$}

Consider the Lie algebra $\mathfrak{r}_{4,\alpha,1} = (14,\alpha24,34,0)$ with the complex structure defined as
$$
J = \left(\begin{array}{cccc}
0 & 0 & -1 & 0 \\
0 & 0 & 0 & 1 \\
1 & 0 & 0 & 0 \\
0 & -1 & 0 & 0
\end{array}\right) \in \mathrm{End}(\mathfrak{r}_{4,\alpha,1}^\vee)
$$
in terms of the chosen coframe.

According to \cite[Appendix 6.10 of the arXiv version, page 54]{angella-bazzoni-parton} the generic lcs structures are either 
$$
\left\{\begin{array}{l}
\theta = -(1+\alpha)e_{4} \\
\Omega = \omega_{12}  e^{12} + \omega_{14} e^{14} +  \omega_{23}e^{23} + \omega_{24} e^{24}+ \omega_{34}e^{34} \\
\text{with }\omega_{12}\omega_{34}+\omega_{14}\omega_{23} \neq 0
\end{array}\right. ,$$
only when $\alpha\neq -1$,
or
$$
\left\{\begin{array}{l}
\theta = -2e_{4} \\
\Omega = \omega_{13}  e^{13} + \omega_{14} e^{14} + \omega_{24} e^{24}+ \omega_{34}e^{34} \\
\text{with }\omega_{13}\omega_{24}\neq 0
\end{array}\right. .$$

In the first case $\theta = -(1+\alpha)e_{4}$, we have that $\Omega$ is never $J$-positive: indeed $\Omega(e_1, Je_1)=0$. Then, there is no lcK structure in this case. In the second case $\theta = -2e_{4}$, the J-invariance of $\Omega$ requires $\omega_{14}=0$ and $\omega_{34}=0$ and $J$-positive implies $\omega_{13}>0$ and $\omega_{24}<0$. Finally the generic lcK structure is 
$$
\left\{\begin{array}{l}
\theta = -2e_{4} \\
\Omega = \omega_{13}  e^{13} + \omega_{24} e^{24} \\
\text{with }\omega_{13}>0, \omega_{24}<0
\end{array}\right. .$$

The generic automorphisms of ($\mathfrak{r}_{4,\alpha,1},J)$ with $\alpha\neq 0,1$ are associated to
$$\left(\begin{array}{cccc}
	a_{11} & 0 & a_{13} & 0 \\
	0 & 1 & 0 & 0 \\
	-a_{13} & 0 & a_{11} & 0 \\
	0 & 0 & 0 & 1
\end{array}\right)
\in \mathrm{End}(\mathfrak{r}_{4,\alpha,1}^\vee)
\text{ with }a_{11}^{2} + a_{13}^{2}\neq0.
$$
For $a_{11}=\frac{1}{\sqrt{\omega_{13}}}$ and $a_{13}=0$, we apply the automorphism
$$
\left(\begin{array}{cccc}
\frac{1}{\sqrt{\omega_{13}}} & 0 & 0 & 0 \\
0 & 1 & 0 & 0 \\
0 & 0 & \frac{1}{\sqrt{\omega_{13}}} & 0 \\
0 & 0 & 0 & 1
\end{array}\right)
\in \mathrm{End}(\mathfrak{r}_{4,\alpha,1}^\vee)
$$
to get the lcK form
$$
\left\{\begin{array}{l}
\theta = -2 e^4 \\
\Omega = e^1\wedge e^3 + \sigma e^2\wedge e^4 \\
\text{with } \sigma < 0
\end{array}\right. .$$

There is no further reduction, since the generic linear complex automorphism transforms $ e^{13} +\sigma e^{24}$ into $  \left( a_{11}^{2} + a_{13}^{2} \right)e^{13} +\sigma  e^{24}$, fixing the coefficient along $e^{24}$.

\begin{rmk}
This algebra does not admit any Vaisman structure, see Remark \ref{rmk:no-vaisman-on-algebra}.
\end{rmk}

\subsection{$\mathfrak{r}_{4,\alpha,\alpha}$ with $\alpha\not\in\{ 0,1\}$}

Consider the Lie algebras $\mathfrak{r}_{4,\alpha,\alpha} = (14,\alpha24,\alpha34,0)$ for $\alpha\not\in\{0,1\}$ (the case $\alpha=-1$ is sometimes also denoted as $\hat{\mathfrak{r}}_{4,-1} = \mathfrak{r}_{4,\alpha=-1,\alpha=-1}=(14,-24,-34,0)$),  with the complex structure defined as
$$
J = \left(\begin{array}{cccc}
0 & 0 & 0 & 1 \\
0 & 0 & -1 & 0 \\
0 & 1 & 0 & 0 \\
-1 & 0 & 0 & 0
\end{array}\right)
\in \mathrm{End}(\mathfrak{r}_{4,\alpha,\alpha}^\vee)
$$
in terms of the chosen coframe.

According to \cite[Appendix 6.10 of the arXiv version, page 52, and Appendix 6.11 of the arXiv version, page 55]{angella-bazzoni-parton} the generic lcs structure for both Lie algebras is 
$$
\left\{\begin{array}{l}
\theta = -2\alpha e_{4} \\
\Omega =  \omega_{14} e^{14} +  \omega_{23}e^{23} + \omega_{24} e^{24}+ \omega_{34}e^{34} \\
\text{with }\omega_{14}\omega_{23} \neq 0
\end{array}\right. .$$
Requiring $J$-positive we obtain $\Omega(e_1,Je_1)=-\omega_{14}>0$, $\Omega(e_2,Je_2)=\omega_{23}>0$. Assuming $\Omega$ is $J$-positive, we obtain $\omega_{34}=\omega_{12}=0$ and $\omega_{24}=-\omega_{13}=0$. Therefore the generic lcK structure for these Lie algebras is
$$
\left\{\begin{array}{l}
\theta = -2\alpha e_{4} \\
\Omega =  \omega_{14} e^{14} +  \omega_{23}e^{23} \\
\text{with }\omega_{14}<0, \omega_{23}>0
\end{array}\right. .$$

The generic complex automorphisms for these Lie algebras  are associated to
$$\left(\begin{array}{cccc}
1 & 0 & 0 & 0 \\
0 & a_{22} & a_{23} & 0 \\
0 & -a_{23} & a_{22} & 0 \\
0 & 0 & 0 & 1
\end{array}\right)
\in \mathrm{End}(\mathfrak{r}_{4,\alpha,\alpha}^\vee)
\text{ with }a_{22}^{2} + a_{23}^{2} \neq 0
$$
For $a_{22}=\frac{1}{\sqrt{\omega_{23}}}$ and $a_{23}=0$, we apply the automorphism
$$
\left(\begin{array}{cccc}
1 & 0 & 0 & 0 \\
0 & \frac{1}{\sqrt{\omega_{23}}} & 0 & 0 \\
0 & 0 & \frac{1}{\sqrt{\omega_{23}}} & 0 \\
0 & 0 & 0 & 1
\end{array}\right)
\in \mathrm{End}(\mathfrak{r}_{4,\alpha,\alpha}^\vee)
$$
to get the lcK form
$$
\left\{\begin{array}{l}
\theta = -2\alpha e^4 \\
\Omega = \sigma e^1\wedge e^4 + e^2\wedge e^3 \\
\text{with } \sigma < 0
\end{array}\right. .$$

There is no further reduction, since the generic linear complex automorphism transforms $ \sigma e^{14} + e^{23}$ into $  \sigma e^{14} + \left( a_{22}^{2} + a_{23}^{2} \right)e^{23}$, fixing the coefficient along $e^{14}$.

\begin{rmk}
This algebra does not admit any Vaisman structure, see Remark \ref{rmk:no-vaisman-on-algebra}.
\end{rmk}

\subsection{$\mathfrak{r'}_{4,\gamma,\delta}$ with $\delta>0$}

Consider the Lie algebra $\mathfrak{r'}_{4,\gamma,\delta} = (14,\gamma24+\delta34,-\delta24+\gamma34,0)$. 
According to \cite[Appendix 6.12 of the arXiv version, page 56]{angella-bazzoni-parton} the generic lcs structure is 
$$
\left\{\begin{array}{l}
\theta = -2\gamma e_{4} \\
\Omega =  \omega_{14}e^{14} + \omega_{23} e^{23}  + \omega_{24} e^{24}+ \omega_{34}e^{34} \\
\text{with }\omega_{14}\omega_{23} \neq 0 
\end{array}\right. , $$
only when $\gamma\neq0$.

According to \cite{ovando} this Lie algebra admits two not equivalent complex structures. We consider first the complex structure defined as
$$
J_1 = \left(\begin{array}{cccc}
0 & 0 & 0 & 1 \\
0 & 0 & -1 & 0 \\
0 & 1 & 0 & 0 \\
-1 & 0 & 0 & 0
\end{array}\right)
\in \mathrm{End}((\mathfrak{r}^\prime_{4,\gamma,\delta})^\vee)
$$
We impose now $\Omega$ to be $J_1$-invariant and $J_1$-positive and we reduce the generic lcs structure to
$$\left\{\begin{array}{l}
	\theta = -2\gamma e_{4} \\
	\Omega =  \omega_{14} e^{14}  + \omega_{23} e^{23} \\
	\text{with }\omega_{14}<0,\omega_{23} > 0
\end{array}\right. ,$$
only when $\gamma\neq0$.
The generic automorphisms of ($\mathfrak{r'}_{4,\gamma,\delta},J_1)$ with $\alpha\neq 0,1$ are associated to
\begin{equation}\label{aut_r'4.gamma.delta}
\left(\begin{array}{cccc}
1 & 0 & 0 & 0 \\
0 & a_{22} & a_{23} & 0 \\
0 & -a_{23} & a_{22} & 0 \\
0 & 0 & 0 & 1
\end{array}\right)
\in \mathrm{End}((\mathfrak{r}^\prime_{4,\gamma,\delta})^\vee)
\text{ with }a_{22}^{2} + a_{23}^{2}\neq0.
\end{equation}

For $a_{22}=\frac{1}{\sqrt{\omega_{23}}}$ and $a_{23}=0$, we apply the automorphism
$$
\left(\begin{array}{cccc}
1 & 0 & 0 & 0 \\
0 & \frac{1}{\sqrt{\omega_{23}}} & 0 & 0 \\
0 & 0 & \frac{1}{\sqrt{\omega_{23}}} & 0 \\
0 & 0 & 0 & 1
\end{array}\right)
\in \mathrm{End}((\mathfrak{r}^\prime_{4,\gamma,\delta})^\vee)
$$
to get the lcK form
$$
\left\{\begin{array}{l}
\theta = - 2\gamma e^4 \\
\Omega = \sigma e^1\wedge e^4 + e^2\wedge e^3\\
\text{with } \sigma < 0
\end{array}\right. ,$$
only when $\gamma\neq0$.

There is no further reduction, since the generic linear complex automorphism transforms $ \sigma e^{14} + e^{23}$ into $\sigma  e^{14} + \left( a_{22}^{2} + a_{23}^{2} \right) e^{23}$, fixing the coefficient along $e^{14}$.

Next we consider the second complex structure given by 
$$
J_2= \left(\begin{array}{cccc}
0 & 0 & 0 & 1 \\
0 & 0 & 1 & 0 \\
0 & -1 & 0 & 0 \\
-1 & 0 & 0 & 0
\end{array}\right)
\in \mathrm{End}((\mathfrak{r}^\prime_{4,\gamma,\delta})^\vee)
$$
in terms of the chosen coframe. Requiring $\Omega$ to be $J_2$-positive we get $\omega_{23}<0$, this is the only difference with the case $J_1$. Also the complex automorphisms are the same. Taking the automorphism
$$
\left(\begin{array}{cccc}
1 & 0 & 0 & 0 \\
0 & \frac{1}{\sqrt{-\omega_{23}}} & 0 & 0 \\
0 & 0 & \frac{1}{\sqrt{-\omega_{23}}} & 0 \\
0 & 0 & 0 & 1
\end{array}\right)
\in \mathrm{End}((\mathfrak{r}^\prime_{4,\gamma,\delta})^\vee)
$$
the generic lcK form for this Lie algebra reduces to
$$
\left\{\begin{array}{l}
\theta = -2\gamma e^4 \\
\Omega = \sigma e^1\wedge e^4 - e^2\wedge e^3\\
\text{with } \sigma < 0
\end{array}\right. ,$$
only when $\gamma\neq0$.

There is no further reduction: the generic automorphism trasforms $\sigma e^{14}-e^{23}$ into $\sigma e^{14}-(a_{22}^2+a_{23}^2)e^{23}$.

\begin{rmk}
This algebra does not admit any Vaisman structure, see Remark \ref{rmk:no-vaisman-on-algebra}.
\end{rmk}

\subsection{$\mathfrak{d}_4$}

Consider the Lie algebra $\mathfrak{d}_4 = (14,-24,-12,0)$. According to \cite[Appendix 6.13 of the arXiv version, pages 56--57]{angella-bazzoni-parton} the generic lcs structure are either
$$
\left\{\begin{array}{l}
\theta = \theta_4 e_{4} \\
\Omega =  \omega_{12}(e^{12}-\theta_4e^{34}) + \omega_{14} e^{14}  + \omega_{24} e^{24}\\
\text{with } \theta_4 \not\in \{-1, 0, 1\} , \omega_{12}\neq0 
\end{array}\right. , $$
or
$$
\left\{\begin{array}{l}
\theta = e_{4} \\
\Omega =  \omega_{12}(e^{12}-e^{34}) + \omega_{14} e^{14}+ \omega_{23} e^{23}  + \omega_{24} e^{24}\\ 
\text{with }\omega_{12}^2-\omega_{14}\omega_{23} \neq 0
\end{array}\right. , $$
or
$$
\left\{\begin{array}{l}
 \theta = - e_{4}  \\
\Omega =  \omega_{12}(e^{12}+e^{34})+ \omega_{13} e^{13} + \omega_{14} e^{14}  + \omega_{24} e^{24}\\
\text{with }\omega_{12}^2+\omega_{13}\omega_{24} \neq 0
\end{array}\right. .$$

According to \cite{ovando} this Lie algebra admits two not equivalent complex structures defined as
$$
J_1=\left(\begin{array}{cccc}
0 & 0 & 1 & 0 \\
0 & 0 & 0 & 1 \\
-1 & 0 & 0 & 0 \\
0 & -1 & 0 & 0
\end{array}\right)
\in\mathrm{End}(\mathfrak{d}_4^\vee)
, \quad
J_2=\left(\begin{array}{cccc}
0 & 0 & 1 & 0 \\
1 & 0 & 0 & 1 \\
-1 & 0 & 0 & 0 \\
0 & -1 & -1 & 0
\end{array}\right)
\in\mathrm{End}(\mathfrak{d}_4^\vee).
$$

Let us start with $J_1$. Requiring $J_1$ to be positive we obtain $\omega_{13}<0$, in particular $\omega_{13}\neq0$. Then the only possibility for a compatible lcs structure is
$$
\left\{\begin{array}{l}
\theta = -e_{4} \\
\Omega =  \omega_{12}(e^{12}+e^{34})+ \omega_{13} e^{13} + \omega_{14} e^{14}  + \omega_{24} e^{24}\\
\text{with }\omega_{12}^2+\omega_{13}\omega_{24} \neq 0
\end{array}\right. .$$

Assuming $\Omega$ to be $J_1$-invariant and $J_1$-positive we obtain a generic lcK structure  
$$
\left\{\begin{array}{l}
	\theta = -e_{4} \\
	\Omega =  \omega_{12}(e^{12}+e^{34})+ \omega_{13} e^{13} + \omega_{24} e^{24}\\
	\text{with } \omega_{13}<0, \omega_{24}<0, -\omega_{12}^2+\omega_{13}\omega_{24} > 0
\end{array}\right. .$$

The generic automorphisms of ($\mathfrak{d}_4,J_1)$ in the chosen coframe are associated to
$$
\left(\begin{array}{cccc}
a_{11} & 0 & 0 & -a_{23} \\
0 & 1 & 0 & 0 \\
0 & a_{23} & a_{11} & 0 \\
0 & 0 & 0 & 1
\end{array}\right)
\in\mathrm{End}(\mathfrak{d}_4^\vee)
\text{ with }a_{11}\neq0.
$$
A generic automorphisim transforms $\theta=-e_{4}$ into $a_{23}e^1-e^4$ hence $a_{23}$ must be $0$. Then for $a_{11}=\frac{1}{\sqrt{-\omega_{13}}}$, we apply the automorphism
$$
\left(\begin{array}{cccc}
\frac{1}{\sqrt{-\omega_{13}}} & 0 & 0 & 0 \\
0 & 1 & 0 & 0 \\
0 & 0 & \frac{1}{\sqrt{-\omega_{13}}} & 0 \\
0 & 0 & 0 & 1
\end{array}\right)
\in\mathrm{End}(\mathfrak{d}_4^\vee)
$$
to get the lcK form
$$
\left\{\begin{array}{l}
\theta = - e^4 \\
\Omega = \mu(e^{12}+e^{34})- e^{1} \wedge e^{3} + \sigma  e^{2} \wedge e^{4}\\
\text{with } \mu^2+\sigma < 0
\end{array}\right. .$$
Finally applying the automorphism $a_{23}=0$ and $a_{21}=-1$ (if it is necessary) we can assume that $\mu\geq0$.
There is no further reduction, since the generic linear complex automorphism fixes the coefficient along $e^{24}$ and the sign of the coefficient along to $e^{12}$.

\begin{rmk}
The generic $A=a_1e_1+a_2e_2+a_3e_3+a_4e_4$ yields
$$ \mathrm{ad}_A = \left(\begin{matrix}
a_4 & 0 & 0 & -a_1 \\
0 & -a_4 & 0 & a_2 \\
-a_2 & a_1 & 0 & 0 \\
0 & 0 & 0 & 0
\end{matrix}\right). $$
Then $\mathrm{ad}_A$ is skew-symmetric if and only if $A=a_3 e_3$. But then $\theta(A)=0$. Therefore, by Lemma \ref{Vaisman}, there is no Vaisman structure among the above lcK structures.
\end{rmk}

Now we consider the complex structure $J_2$. We impose now $\Omega$ to be  $J_2$-positive. In the first and second case, there is no lcK structure because we need $\omega_{13}<0$. In the third case, we need $\omega_{12} - \omega_{24} > 0$ and $-\omega_{12} - \omega_{24} > 0$, but $J_2$-invariance for $\Omega$ yields $\omega_{24}=0$. Then, there is no lcK structure for $J_2$.

\subsection{$\mathfrak{d}_{4,1}$}

Consider the Lie algebra $\mathfrak{d}_{4,1} = (14,0,-12+34,0)$ with the complex structure 
$$
J = \left(\begin{array}{cccc}
0 & 0 & 0 & -1 \\
0 & 0 & -1 & 0 \\
0 & 1 & 0 & 0 \\
1 & 0 & 0 & 0
\end{array}\right)
\in\mathrm{End}(\mathfrak{d}_{4,1}^\vee)
$$
in terms of the chosen coframe.

According to \cite[Appendix 6.14 of the arXiv version, pages 61--62]{angella-bazzoni-parton} the generic lcs structures are either 
$$
\left\{\begin{array}{l}
\theta = \theta_4e^{4} \\
\Omega = \omega_{12} (e^{12}-(\theta_4+1)e^{34}) + \omega_{14} e^{14} +  \omega_{24} e^{24} \\
\text{with }\omega_{12}\neq 0, \theta_4\neq\{-1,-2,0\}
\end{array}\right. , $$
or
$$
\left\{\begin{array}{l}
\theta = -2e^{4} \\
\Omega = \omega_{12} (e^{12}+e^{34}) +\omega_{13} e^{13}+ \omega_{14} e^{14} + \omega_{24} e^{24}  \\
\text{with }\omega_{12}^2-\omega_{13}\omega_{24}\neq 0
\end{array}\right. , $$
or
$$
\left\{\begin{array}{l}
\theta = -e^{4} \\
\Omega = \omega_{12}e^{12} + \omega_{14} e^{14}+\omega_{23} e^{23} + \omega_{24} e^{24}  \\
\text{with }\omega_{14}\omega_{23}\neq 0
\end{array}\right. , $$
or
$$
\left\{\begin{array}{l}
\theta = \theta_{2}e^2+\theta_{4}e^{4} \\
\Omega = \omega_{12}e^{12} + \frac{(\theta_{4}+1)(\theta_{2}\omega_{12}-\omega_{23})}{\theta_{2}^2}e^{14} +\omega_{23} e^{23} + \omega_{24} e^{24} - \frac{(\theta_{4}+1)\omega_{23}}{\theta_{2}}  e^{34} \\
\text{with }(\theta_{4} + 1)\omega_{23}\neq 0, \theta_2 \neq 0
\end{array}\right. .$$

In the cases $\theta = \theta_4e^{4}$ with $\theta_{4}\neq -1$, we have that $\Omega$ is never $J$-positive: indeed $\Omega(e_2, Je_2)=\omega_{23}=0$. Then, there is no lcK structure in this case. 
In the case $\theta = -e^{4}$, the $J$-invariance of $\Omega$ requires $\omega_{12}=-\omega_{34}=0$ and $\omega_{13}=\omega_{24}=0$ and $J$-positive implies $\omega_{14}>0$ and $\omega_{23}>0$. Finally the generic lcK structure is 
$$
\left\{\begin{array}{l}
 \theta = -e_{4} \\
\Omega = \omega_{14}  e^{14} + \omega_{23} e^{23} \\
\text{with }\omega_{14}>0, \omega_{23}>0
\end{array}\right. .$$

The generic automorphisms of $(\mathfrak{d}_{4,1}, J)$ are associated to
\begin{equation}\label{aut_r4.1}
\left(\begin{array}{cccc}
1 & 0 & 0 & 0 \\
0 & a_{22} & 0 & 0 \\
0 & 0 & a_{22} & 0 \\
0 & 0 & 0 & 1
\end{array}\right)\in\mathrm{End}(\mathfrak{d}_{4,1}^\vee)
\text{ with }a_{22}^{2}\neq0.
\end{equation}
For $a_{22}=\frac{1}{\sqrt{\omega_{23}}}$, we apply the automorphism
$$
\left(\begin{array}{cccc}
1 & 0 & 0 & 0 \\
0 & \frac{1}{\sqrt{\omega_{23}}} & 0 & 0 \\
0 & 0 & \frac{1}{\sqrt{\omega_{23}}} & 0 \\
0 & 0 & 0 & 1
\end{array}\right)
\in\mathrm{End}(\mathfrak{d}_{4,1}^\vee)
$$
to get the lcK form
$$
\left\{\begin{array}{l}
\theta = - e^4 \\
\Omega = \sigma e^1\wedge e^4 + e^2\wedge e^3  \\
\text{with } \sigma > 0
\end{array}\right. .$$

There is no further reduction, since the generic linear complex automorphism transforms $ \sigma e^{14} + e^{23}$ into $ \sigma e^{14}+ a_{22}^{2} e^{23}$, fixing the coefficient along $e^{14}$.

Finally we consider the case 
$$
\left\{\begin{array}{l}
\theta = \theta_{2}e^2+\theta_{4}e^{4} \\
\Omega = \omega_{12}e^{12} + \frac{(\theta_{4}+1)(\theta_{2}\omega_{12}-\omega_{23})}{\theta_{2}^2}e^{14} +\omega_{23} e^{23} + \omega_{24} e^{24} - \frac{(\theta_{4}+1)\omega_{23}}{\theta_{2}}  e^{34} \\
\text{with }(\theta_{4} + 1)\omega_{23}\neq 0, \theta_2 \neq 0
\end{array}\right. .$$
Assuming $\Omega$ is $J$-invariant and $J$-positive, we reduced it to the lcK form
$$
\left\{\begin{array}{l}
\theta = \theta_{2}e^2+\theta_4 e^{4} \\
\Omega=\left( \frac{\omega_{23} {\left(\theta_{4} + 1\right)}}{\theta_{2}^{2}} \right)  e^{12} +  \left(\frac{\omega_{23}\theta_{4}(\theta_{4} + 1)}{\theta_{2}^{2}} \right)  e^{14} + \omega_{23} e^{23} + \left( -\frac{\omega_{23} {\left(\theta_{4} + 1\right)}}{\theta_{2}^{2}} \right)  e^{34} \\
\text{with } \omega_{23}> 0, \theta_4+1<0
\end{array}\right. .$$
We consider \eqref{aut_r4.1} with $a_{22}=\frac{1}{\theta_{2}}$ and we get
$$
\left\{\begin{array}{l}
\theta = e^2+\theta_4 e^{4} \\
\Omega=\frac{\omega_{23}}{\theta_{2}^{2}} \left( (\theta_{4} + 1)  ( e^{12} +  \theta_{4}e^{14}-e^{34} )+e^{23}  \right)\\
\text{with } \omega_{23}> 0, \theta_4+1<0
\end{array}\right. .$$
There is no further reduction because a generic automorphism applied to the Lee form $\theta= e^2+\theta_{4}e^{4}$ gives $ a_{22}e^2+\sigma e^{4}$, then the only possible automorphism between two lcK forms of this kind is the identity.

\subsection{$\mathfrak{d}_{4,\frac12}$}

Consider the Lie algebra $\mathfrak{d}_{4,\frac{1}{2}} = (\frac12 14,\frac12 24,-12+34,0)$. According to \cite[Appendix 6.14 of the arXiv version, pages 60--61]{angella-bazzoni-parton} the generic lcs structures are either 
$$
\left\{\begin{array}{l}
\theta = -\frac32 e^{4} \\
\Omega =  \omega_{12}(e^{12} + \frac12e^{34}) +  \omega_{13} e^{13}  +\omega_{14} e^{14}  + \omega_{23} e^{23}  + \omega_{24} e^{24}\\
\text{with } \omega_{12}^2-2\omega_{13}\omega_{24}+2\omega_{14}\omega_{23}\neq0 
\end{array}\right. , $$
or
$$
\left\{\begin{array}{l}
\theta = \theta_4 e^{4} \\
\Omega =  \omega_{12}(e^{12}-(\theta_{4}+1)e^{34}) + \omega_{14} e^{14} + \omega_{24} e^{24}\\ 
\text{with }\omega_{12}(\theta_{4}+1) \neq 0, \theta_{4}\neq -\frac32,  \theta_4 \neq 0 
\end{array}\right. .$$
According to \cite{ovando} $\mathfrak{d}_{4,\frac12}$ admits three different complex structures associated to
$$J_1=\left(\begin{array}{cccc}
	0 & -1 & 0 & 0 \\
	1 & 0 & 0 & 0 \\
	0 & 0 & 0 & 1 \\
	0 & 0 & -1 & 0
\end{array}\right)
\in\mathrm{End}(\mathfrak{d}_{4,\frac12}^\vee), \quad
J_2=\left(\begin{array}{cccc}
0 & 1 & 0 & 0 \\
-1 & 0 & 0 & 0 \\
0 & 0 & 0 & 1 \\
0 & 0 & -1 & 0
\end{array}\right)\in\mathrm{End}(\mathfrak{d}_{4,\frac12}^\vee), $$
$$
J_3=\left(\begin{array}{cccc}
0 & 0 & 0 & 1 \\
0 & 0 & 2 & 0 \\
0 & -\frac{1}{2} & 0 & 0 \\
-1 & 0 & 0 & 0
\end{array}\right)\in\mathrm{End}(\mathfrak{d}_{4,\frac12}^\vee).
$$
We consider first the complex structure $J_1$. If $\theta=-\frac32 e^4$, then the associated lcs form $\Omega$ is never $J_1$-positive. Indeed, $\Omega(e_1,J_1e_1)=\omega_{12}>0$ and $\Omega(e_3,J_1e_3)=-\omega_{34}=-\frac12\omega_{12}>0$, wich is a contradiction.
Therefore the only possibility is $\theta=\theta_{4} e^4$ with $\theta_{4}\neq -\frac32$. Assuming $\Omega$ is $J_1$-invariant and $J_1$-positive we reduce to
$$
\left\{\begin{array}{l}
\theta = \theta_4 e^{4} \\
\Omega =  \tau(e^{12}-(\sigma+1)e^{34})\\ 
\text{with }\tau>0, \sigma+1 >0, \theta_4 \neq 0
\end{array}\right. .$$
A generic automorphism for $(\mathfrak{d}_{4,\frac12},J_1)$ is given by
\begin{equation}\label{aut_d4.1/2}
\left(\begin{array}{cccc}
a_{11} & a_{12} & 0 & 0 \\
-a_{12} & a_{11} & 0 & 0 \\
0 & 0 & 1 & 0 \\
0 & 0 & 0 & 1
\end{array}\right)
\in\mathrm{End}(\mathfrak{d}_{4,\frac12}^\vee)
\text{ with } a_{11}^{2} + a_{12}^{2}=1.
\end{equation}
This automorphism fixes $\Omega$, therefore there is no further reduction in this case.

Now we focus on the complex structure $J_2$. If $\theta=-\frac32 e^4$, and we require $\Omega$ to be $J_2$-positive and $J_2$-invariant we obtain
$$
\left\{\begin{array}{l}
\theta = -\frac32 e^{4} \\
\Omega =  \omega_{12}  (e^{12}+\frac12e^{34}) + \omega_{13} (e^{13} + e^{24})  + \omega_{14} (e^{14}- e^{23})   \\ 
\text{with } \omega_{12}^2-2\omega_{13}^2-2\omega_{14}^2 > 0 , \omega_{12}<0
\end{array}\right. .$$
Suppose that $\omega_{13}=\omega_{14}=0$, then the lcK form is 
$$
\left\{\begin{array}{l}
\theta = -\frac32 e^{4} \\
\Omega =  \sigma  (e^{12}+\frac12e^{34})  \\ 
\text{with } \sigma<0. 
\end{array}\right. .$$
There is no further reduction since a generic automorphism for $(\mathfrak{d}_{4,\frac12},J_2)$ has the same form as in \eqref{aut_d4.1/2} fixing the coefficients of $\Omega$ along to $e_{12}$ and $e_{34}$.
If $\omega_{13}^2+\omega_{14}^2\neq0$, then applying \eqref{aut_d4.1/2} with 
$a_{11} = \frac{\omega_{13}} {\sqrt {\left( \omega_{13}^2+\omega_{14}^2 \right) }}$, 
$a_{12} =\frac{-\omega_{14}}{\sqrt{\left(\omega_{13}^2+\omega_{14}^2\right)}}$ we obtain the lcK form
$$
\left\{\begin{array}{l}
\theta = -\frac32 e^{4} \\
\Omega =  \sigma  (e^{12}+\frac12e^{34}) +\tau(e^{13}+e^{24}) \\ 
\text{with } \sigma<0, \tau>0. 
\end{array}\right. .$$
In the same way as above, there is no further reduction in this case.

Finally we consider the complex structure $J_3$. If $\theta=\theta_{4}e^4$ with $\theta_{4}\neq-\frac32$ then $\Omega$ is not positive. Indeed, $\Omega(e_2,J_3e_2)=0$. Therefore the only possibility for the Lee form is $\theta=-\frac32e^4$. Requiring $\Omega$ to be $J_3$-positive and $J_3$-invariant we get that $\omega_{12}=0$ and $2\omega_{13}=\omega_{24}$  and it reduces to the lcK form
$$
\left\{\begin{array}{l}
\theta = -\frac32 e^{4} \\
\Omega =  \omega_{13} (e^{13}+2e^{24}) +\omega_{14}e^{14}+\omega_{23}e^{23} \\ 
\text{with } \omega_{14}<0, \omega_{23}<0, \omega_{14}\omega_{23} - 2\omega_{13}^2>0
\end{array}\right. .$$
A generic automorphism for $(\mathfrak{d}_{4,\frac12},J_3)$ in the chosen coframe is 
$$
\left(\begin{array}{cccc}
1 & 0 & 0 & 0 \\
a_{12} & a_{22} & 0 & 0 \\
0 & 0 & a_{22} & 2 \, a_{12} \\
0 & 0 & 0	 & 1
\end{array}\right)\in\mathrm{End}(\mathfrak{d}_{4,\frac12}^\vee)
\text{ with } a_{22}\neq0.
$$
Applying the automorphism with $a_{12}=0$ and $a_{22}=\frac{1}{\sqrt{-\omega_{23}}}$ we get
$$
\left\{\begin{array}{l}
\theta = -\frac32 e^{4} \\
\Omega =  \mu (e^{13}+2e^{24}) +\sigma e^{14}-e^{23} \\ 
\text{with } \sigma<0, \sigma + 2\mu^2>0
\end{array}\right. ,$$
where we denote $\mu=\frac{\omega_{13}}{\sqrt{-\omega_{23}}}$ $\sigma=\omega_{14}$.
There is no further reduction, since a generic automorphism applied to the Lee form $\theta = -\frac32 e^{4}$ gives $ 2a_{12}e^3-\frac32 e^{4}$, then $a_{12}$ must be $0$. The only possible automorphisms between two lcK forms of this kind transform the coefficient along $e^{23}$ into $a_{22}^2e^{23}$, then $a_{22}=1$ and the automorphism is the identity.

\subsection{$\mathfrak{d}_{4,\lambda}$ with $\lambda>\frac12$, $\lambda\neq1$}

Consider the Lie algebra $\mathfrak{d}_{4,\lambda} = (\lambda 14,(1-\lambda) 24,-12+34,0)$ with $\lambda>\frac12$ and $\lambda\neq 1$. According to \cite[Appendix 6.14 of the arXiv version, page 58]{angella-bazzoni-parton} the generic lcs structures are either 
$$
\left\{\begin{array}{l}
\theta = -(1+\lambda) e^{4} \\
\Omega =  \omega_{12}(e^{12}+\lambda e^{34}) +  \omega_{13} e^{13}  +\omega_{14} e^{14} + \omega_{24} e^{24}\\
\text{with } \lambda\omega_{12}^2-\omega_{13}\omega_{24}\neq0 
\end{array}\right. ,$$
only when $\lambda \neq -1$,
or
$$
\left\{\begin{array}{l}
\theta = (\lambda-2) e^{4} \\
\Omega =  \omega_{12}(e^{12}-(\lambda-1)e^{34}) + \omega_{14} e^{14} + \omega_{23} e^{23} + \omega_{24} e^{24}\\ 
\text{with }(\lambda-1)\omega_{12}^2- \omega_{14}\omega_{23} \neq 0 \end{array}\right. ,$$
only when $\lambda \neq 2$,
or
$$
\left\{\begin{array}{l}
\theta = \theta_{4} e^{4}, \quad \text{with } \theta_{4}\neq -(1+\lambda), \lambda-2 \\
\Omega =  \omega_{12}(e^{12}-(\theta_{4}+1)e^{34}) + \omega_{14} e^{14} + \omega_{24} e^{24}\\ 
\text{with }\omega_{12}^2(\theta_{4}+1)\neq 0, \theta_4 \neq 0
\end{array}\right. .$$
According to \cite{ovando} $\mathfrak{d}_{4,\lambda}$ admits two different complex structures associated to
$$
J_1=\left(\begin{array}{cccc}
0 & 0 & 0 & -\frac{1}{\lambda} \\
0 & 0 & -1 & 0 \\
0 & 1 & 0 & 0 \\
{\lambda} & 0 & 0 & 0
\end{array}\right)
\in\mathrm{End}(\mathfrak{d}_{4,\lambda}^\vee),
\quad
J_2=\left(\begin{array}{cccc}
0 & 0 & -1 & 0 \\
0 & 0 & 0 & \frac{1}{1-\lambda} \\
1 & 0 & 0 & 0 \\
0 & {\lambda-1} & 0 & 0
\end{array}\right)
\in\mathrm{End}(\mathfrak{d}_{4,\lambda}^\vee).
$$
We consider first the complex structure $J_1$. Requiring $\Omega(e_2,J_1e_2)=\omega_{23}>0$, we get that the only possibility for the Lee form is $\theta=(\lambda-2)e^4$. Assuming $\Omega$ is $J_1$-invariant and $J_1$-positive we reduce to the following generic lcK structure
$$
\left\{\begin{array}{l}
\theta = (\lambda-2) e^{4} \\
\Omega =  \omega_{14}  e^{14} + \omega_{23}  e^{23}\\ 
\text{with }\omega_{14}>0, \omega_{23} >0, \lambda \neq 2
\end{array}\right..$$

A generic automorphism for $(\mathfrak{d}_{4,\lambda},J_1)$ is given by
$$
\left(\begin{array}{cccc}
1 & 0 & 0 & 0 \\
0 & a_{22} & 0 & 0 \\
0 & 0 & a_{22} & 0 \\
0 & 0 & 0 & 1
\end{array}\right)\in\mathrm{End}(\mathfrak{d}_{4,\lambda}^\vee)
\text{ with } a_{22}\neq0.
$$

We apply the automorphism with $a_{22}=\frac{1}{\sqrt{\omega_{23}}}$ and we reduce to lcK form
$$
\left\{\begin{array}{l}
\theta = (\lambda-2) e^{4} \\
\Omega =  \sigma  e^{14} + e^{23}\\ 
\text{with }\sigma>0, \lambda \neq 2
\end{array}\right.,$$
where $\sigma=\omega_{14}$. And there is no further reduction since a generic automorphism fixes the coefficient along to $e^{14}$.

Now we focus on the complex structure $J_2$. If $\theta\neq -(1+\lambda)e^4$, then associated $2$-form $\Omega$ is never positive. Indeed, $\Omega(e_1,J_2e_1)=\omega_{13}=0$. We consider the case $\theta= -(1+\lambda)e^4$. Assuming $\Omega$ is $J_2$-invariant and $J_2$-positive we obtain that $\omega_{12}=\omega_{14}=0$ and we reduce the lcK form to
$$
\left\{\begin{array}{l}
	\theta = -(1+\lambda) e^{4} \\
	\Omega =  \omega_{13}  e^{13} + \omega_{24}e^{24}\\ 
	\text{with }\omega_{13}>0, (\lambda - 1)\omega_{24}>0
\end{array}\right. ,
$$

A generic automorphism for $(\mathfrak{d}_{4,\lambda},J_2)$ is given by
$$
\left(\begin{array}{cccc}
a_{11} & 0 & 0 & 0 \\
0 & 1 & 0 & 0 \\
0 & 0 & a_{11} & 0 \\
0 & 0 & 0 & 1
\end{array}\right)
\in\mathrm{End}(\mathfrak{d}_{4,\lambda}^\vee)
\text{with } a_{11}\neq0 .
$$
Taking $a_{11}=\frac{1}{\sqrt{\omega_{13}}}$ we get the lcK form
$$
\left\{\begin{array}{l}
\theta = -(1+\lambda) e^{4} \\
\Omega =  e^{13} + \sigma e^{24}\\ 
\text{with }(\lambda - 1)\sigma>0
\end{array}\right. ,
$$
and there is no further reduction since a possible automorphism applied to $\Omega$ fixes the coefficient along to $e^{24}$.

\begin{rmk}
Consider all the cases $\mathfrak{d}_{4,\lambda}$ for $\lambda\geq\frac{1}{2}$ together. For the generic $A=a_1e_1+a_2e_2+a_3e_3+a_4e_4$, we get
$$ \mathrm{ad}_A = \left(\begin{matrix}
\lambda a_4 & 0 & 0 & -\lambda a_1 \\
0 & (a-\lambda) a_4 & 0 & (\lambda-1)a_2 \\
-a_2 & a_1 & a_4 & -a_3 \\
0 & 0 & 0 & 0 
\end{matrix}\right). $$
Therefore, $\mathrm{ad}_A$ is skew-symmetric if and only if $A=0$. By Lemma \ref{Vaisman}, we get that there is no Vaisman structure on $\mathfrak{d}_{4,\lambda}$ for any possible value of the parameter $\lambda$.
\end{rmk}

\subsection{$\mathfrak{d'}_{4,\delta}$ with $\delta\geq0$}

Consider the Lie algebra $\mathfrak{d'}_{4,\delta} = (\frac{\delta}{2}14+24,-14+\frac{\delta}{2}24,-12+\delta34,0)$ with $\delta\geq0$. According to \cite[Appendix 6.15 of the arXiv version, page 63]{angella-bazzoni-parton} the generic lcs structures are  
$$
\left\{\begin{array}{l}
\theta = \theta_{4} e^{4} \\
\Omega =  \omega_{12}(e^{12}-(\delta+\theta_{4})e^{34}) + \omega_{14} e^{14} + \omega_{24} e^{24}\\ 
\text{with }\omega_{12}^2(\delta+\theta_{4})\neq 0, \theta_4 \neq 0
\end{array}\right.. $$
According to \cite{ovando} $\mathfrak{d'}_{4,\delta}$  with $\delta\geq0$ admits two different complex structures associated to
$$
J_2=\left(\begin{array}{cccc}
0 & -1 & 0 & 0 \\
1 & 0 & 0 & 0 \\
0 & 0 & 0 & -1 \\
0 & 0 & 1 & 0
\end{array}\right)
\in\mathrm{End}((\mathfrak{d}^\prime_{4,\delta})^\vee), \quad
J_3=\left(\begin{array}{cccc}
0 & -1 & 0 & 0 \\
1 & 0 & 0 & 0 \\
0 & 0 & 0 & 1 \\
0 & 0 & -1 & 0
\end{array}\right)\in\mathrm{End}((\mathfrak{d}^\prime_{4,\delta})^\vee).
$$
In the case $\delta>0$ there are other two more non equivalent complex structures
$$
J_4=\left(\begin{array}{cccc}
0 & 1 & 0 & 0 \\
-1 & 0 & 0 & 0 \\
0 & 0 & 0 & -1 \\
0 & 0 & 1 & 0
\end{array}\right)\in\mathrm{End}((\mathfrak{d}^\prime_{4,\delta})^\vee), \quad
J_1=\left(\begin{array}{cccc}
0 & 1 & 0 & 0 \\
-1 & 0 & 0 & 0 \\
0 & 0 & 0 & 1 \\
0 & 0 & -1 & 0
\end{array}\right)\in\mathrm{End}((\mathfrak{d}^\prime_{4,\delta})^\vee).
$$
A generic automorphism for $(\mathfrak{d'}_{4,\delta},J)$ with $J\in\{J_1,J_2,J_3,J_4\}$ is given by
\begin{equation}\label{aut_d'4.delta}
\left(\begin{array}{cccc}
a_{11} & a_{12} & 0 & 0 \\
-a_{12} & a_{11} & 0 & 0 \\
0 & 0 & 1 & 0 \\
0 & 0 & 0 & 1
\end{array}\right)
\in\mathrm{End}((\mathfrak{d}^\prime_{4,\delta})^\vee)
\text{ with }a_{11}^{2} + a_{12}^{2}=1.
\end{equation}
Notice that for any choice of the complex structure the $J$-invariance condition implies that $\omega_{14}=0$ and $\omega_{24}=0$. Therefore the generic lcs structure reduces to
$$
\left\{\begin{array}{l}
\theta = \theta_{4} e^{4} \\
\Omega =  \omega_{12}(e^{12}-(\delta+\theta_{4})e^{34})\\ 
\text{with }\omega_{12}^2(\delta+\theta_{4})\neq 0, \theta_4 \neq 0
\end{array}\right., $$
and this lcs form is invariant by a generic automorphism given by \eqref{aut_d'4.delta}.

We consider first the complex structure $J_2$. Assuming $\Omega$ is $J_2$-invariant and $J_2$-positive we get
$$
\left\{\begin{array}{l}
\theta = \mu e^{4} \\
\Omega =  \sigma(e^{12}-(\delta+\mu)e^{34})\\ 
\text{with }\sigma>0, \delta+\mu< 0, \mu \neq 0
\end{array}\right. ,$$
where $\mu=\theta_{4}$ and $\sigma=\omega_{12}$. As we mention above there is no further reduction.

If we consider the complex structure $J_3$, in a very similar way we obtain
$$
\left\{\begin{array}{l}
\theta = \mu e^{4} \\
\Omega =  \sigma(e^{12}-(\delta+\mu)e^{34})\\ 
\text{with }\sigma>0, \delta+\mu> 0, \mu \neq 0
\end{array}\right. ,$$
where $\mu=\theta_{4}$ and $\sigma=\omega_{12}$.

We now focus on the complex structure $J_4$ (case $\delta>0$), and we have that the generic lcK structures are
$$
\left\{\begin{array}{l}
\theta = \mu e^{4} \\
\Omega =  \sigma(e^{12}-(\delta+\mu)e^{34})\\ 
\text{with }\sigma<0, \delta+\mu > 0, \mu \neq 0
\end{array}\right. ,$$
where $\mu=\theta_{4}$ and $\sigma=\omega_{12}$.

Finally, if $J=J_1$ (case $\delta>0$), then we obtain 
$$
\left\{\begin{array}{l}
\theta = \mu e^{4} \\
\Omega =  \sigma(e^{12}-(\delta+\mu)e^{34})\\ 
\text{with }\sigma<0, \delta+\mu < 0, \mu \neq 0
\end{array}\right. ,$$
where $\mu=\theta_{4}$ and $\sigma=\omega_{12}$.

\begin{rmk}
In any of the above four cases, it follows from Lemma \ref{Vaisman} that any lcK structure above is of Vaisman type if and only if $\delta=0$. Indeed, if $A\in \mathfrak{d'}_{4,\delta}$ such that  $\theta(A)=1$ and $A \in (\ker \theta)^\perp$, then $A=\frac{1}{\mu}e_4$. Therefore
$$ \operatorname{ad}_A = \left(\begin{matrix}
\frac{\delta}{2\mu} & \frac{1}{\mu} & 0 & 0 \\
-\frac{1}{\mu} & \frac{\delta}{2} & 0 & 0 \\
0 & 0 & \frac{\delta}{\mu} & 0 \\
0 & 0 & 0 & 0
\end{matrix}\right) $$
is skew-symmetric if and only if $\delta=0$. More precisely, all lcK structures on $\mathfrak{d'}_{4,0}$ are of Vaisman type, and any lcK structure on $\mathfrak{d'}_{4,\delta}$ with $\delta>0$ is not of Vaisman type. 
\end{rmk}

\subsection{$\mathfrak{h}_4$}

Consider the Lie algebra $\mathfrak{h}_4 = (\frac12 14 +24,\frac12 24,-12+34,0)$. According to \cite[Appendix 6.16 of the arXiv version, pages 64--65]{angella-bazzoni-parton} the generic lcs structures are either 
$$
\left\{\begin{array}{l}
\theta = \theta_{4} e^{4}\\
\Omega =  \omega_{12}(e^{12}-(\theta_{4}+1)e^{34}) + \omega_{14} e^{14} + \omega_{24} e^{24}\\ 
\text{with }\omega_{12}\neq 0, \theta_4 \not\in \{ -\frac{3}{2}, -1, 0 \}
\end{array}\right. ,$$
or
$$
\left\{\begin{array}{l}
\theta = -\frac32 e^{4} \\
\Omega =  \omega_{12}(e^{12}+\frac12e^{34}) + \omega_{14} e^{14} +  \omega_{23} e^{23}  + \omega_{24} e^{24}\\
\text{with } \omega_{12}^2+2\omega_{14}\omega_{23}\neq0
\end{array}\right. .$$
According to \cite{ovando}, $\mathfrak{h}_{4}$ admits a complex structure associated to
$$J=\left(\begin{array}{cccc}
0 & 0 & -2 & 0 \\
0 & 0 & 0 & 1 \\
\frac{1}{2} & 0 & 0 & 0 \\
0 & -1 & 0 & 0
\end{array}\right)
\in\mathrm{End}(\mathfrak{h}_{4}^\vee).
$$
In both cases we obtain that $\Omega$ is not $J$-positive, since $\Omega(e_1,Je_1)=0$. Therefore there is no lcK structure for this Lie algebra.

\section{Applications}
In this section, we show some applications of our classification of lcK structures in dimension $4$.
In particular, we adapt some constructions of lcs structures in \cite{origlia-construction} and \cite{angella-bazzoni-parton} to the lcK case and, as an application, we can produce many examples in higher dimension, including lcK structures on Oeljeklaus-Toma manifolds, or give a geometric interpretation of some of the $4$-dimensional structures in Table \ref{table:lck-solvable}.

\subsection{LcK extensions}
Let $\h$ be a Lie algebra equipped with an lcK structure $(J,\pint)$, and let $(\omega, \theta)$ be the underlying lcs structure. Let $V$ be a vector space of dimension $2n$ with a Hermitian structure $(J_0,\pint_0)$ and denote by $\omega_0$ the fundamental $2$-form induced by $(J_0,\pint_0)$.
We consider a representation
\[\pi: \h \to \operatorname{End}(V),\]
given by $\pi(X)=-\frac12\theta(X)\I+\rho(X)$ such that $\rho(X)\in\mathfrak{u}(n)\subset\mathfrak{sp}(n,\R)$ for all $X\in\h$.
According to \cite{origlia-construction}, the Lie algebra $\g$ defined by $\g=\h\ltimes_\pi V$ admits an lcs structure $(\omega', \theta')$ given by $\omega'|_\h=\omega$, $\omega'|_V=\omega_0$, $\omega'(X,Y)=0$ for any $X\in\h$, $Y\in V$ and the $1$-form $\theta'\in\g^*$ by $\theta'|_\h=\theta$ and $\theta'|_V=0$.

We define the almost Hermitian structure $(J', \pint')$ on $\g$ given by
\begin{equation}\label{eq:lck-extension}
\begin{array}{rcl}
\pint'|_\h=\pint,&\quad\quad& \pint'|_V=\pint_0,\\
J'|_\h=J, &\quad\quad& J'|_V=J_0.
\end{array}
\end{equation}
It is easy to see that $\omega'$ is the fundamental $2$-form associated to the almost Hermitian structure $(J', \pint')$ on $\g$.
Moreover, $J'$ is integrable since $\pi(X)\circ J_0=J_0\circ\pi(X)$ for any $X\in\h$ (see \cite{barberis-dotti}). Therefore, we obtain that:

\begin{prop}\label{construccionLCK}
Let $\h$ be a Lie algebra equipped with an lcK structure $(J,\pint)$ and let $V$ be a vector space endowed with a Hermitian structure $(J_0,\pint_0)$. Take the representation $\pi\colon \h \to \operatorname{End}(V)$
given by $\pi=-\frac12\theta\I+\rho$ where $\rho(X)\in\mathfrak{u}(n)\subset\mathfrak{sp}(n,\R)$. Then $(J', \pint')$ as in \eqref{eq:lck-extension} is an lcK structure on $\g$.
\end{prop}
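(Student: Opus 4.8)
The plan is to assemble the ingredients already prepared before the statement, so the proof is mostly bookkeeping of block-diagonal data together with two citations. First I would check that $(J',\pint')$ is an almost Hermitian structure on $\g=\h\ltimes_\pi V$. Since $J^2=-\I$ on $\h$ and $J_0^2=-\I$ on $V$, the block-diagonal endomorphism $J'$ satisfies $(J')^2=-\I$; likewise $\pint'$ is positive definite, being block-diagonal with positive definite blocks. For $J'$-compatibility, on $\h$ and on $V$ separately this is the compatibility of the given Hermitian structures, while for mixed arguments $X\in\h$, $Y\in V$ both $\langle J'X,J'Y\rangle'$ and $\langle X,Y\rangle'$ vanish because the metric is block-diagonal and $J'X\in\h$, $J'Y\in V$. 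Hence $(J',\pint')$ is almost Hermitian, and its fundamental $2$-form $\Omega':=\pint'(J'\_,\_)$ restricts to $\omega$ on $\h$, to $\omega_0$ on $V$, and vanishes on mixed pairs (since $J'X\in\h$ for $X\in\h$, so $\langle J'X,Y\rangle'=0$ for $Y\in V$); therefore $\Omega'=\omega'$, the lcs form produced in \cite{origlia-construction}.

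Next I would establish integrability of $J'$. Writing $\pi(X)=-\frac12\theta(X)\I+\rho(X)$ with $\rho(X)\in\mathfrak u(n)$, the scalar part $-\frac12\theta(X)\I$ commutes with every endomorphism of $V$, and $\rho(X)$ commutes with $J_0$ by definition of $\mathfrak u(n)$, so $\pi(X)\circ J_0=J_0\circ\pi(X)$ for all $X\in\h$. Combined with the integrability of $J$ on $\h$ (part of the lcK hypothesis on $\h$) and the vanishing of $N_{J_0}$ on the abelian ideal $V$, this gives $N_{J'}=0$: evaluated on arguments in $\h$ the Nijenhuis tensor reduces to $N_J$, on arguments in $V$ to $N_{J_0}$, and on mixed arguments the only surviving contributions come from the brackets $[X,v]=\pi(X)v$ and amount exactly to the commutator $[\pi(X),J_0]$ just checked to vanish. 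This is precisely the criterion of \cite{barberis-dotti}.

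Finally, by \cite{origlia-construction} the pair $(\omega',\theta')$, with $\theta'|_\h=\theta$ and $\theta'|_V=0$, is an lcs structure on $\g$, that is, $d\theta'=0$ and $d\omega'=\theta'\wedge\omega'$. Since Step 1 identifies $\omega'$ with the fundamental form of the Hermitian structure $(J',\pint')$, this equation says exactly that $(J',\pint')$ is locally conformally K\"ahler with Lee form $\theta'$, which proves the proposition. The only point requiring genuine care is the Nijenhuis computation on mixed arguments in Step 2; everything else reduces to the block-diagonal structure of $J'$, $\pint'$, $\omega'$ and to invoking the lcs extension of \cite{origlia-construction}.
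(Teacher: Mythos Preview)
Your proposal is correct and follows essentially the same approach as the paper: the paper's proof (given in the paragraph preceding the proposition) likewise identifies $\omega'$ as the fundamental form of the block-diagonal almost Hermitian structure, invokes \cite{origlia-construction} for the lcs condition, and cites \cite{barberis-dotti} for integrability of $J'$ via $\pi(X)\circ J_0=J_0\circ\pi(X)$. You simply spell out more of the details that the paper leaves implicit.
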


\begin{rmk}\label{ro-cero}
	If the initial Lie algebra $\h$ is solvable, then $\rho(\h)$ is solvable. Since $\rho(\h)\subset\mathfrak{u}(n)$ and $\mathfrak{u}(n)$ is a compact Lie algebra, then we obtain that $\rho(\h)$ is Abelian, and therefore it is contained in a maximal Abelian subalgebra of $\mathfrak{u}(n)$. In particular $\rho(\h')=0$, where $\h'=[\h,\h]$ denotes the commutator ideal. Moreover, we may assume that $J\in\mathfrak{u}(n)$ is in the same maximal Abelian subalgebra which contains $\rho(\h)$.
\end{rmk}

According to \cite{origlia-construction}, we have that $\g$ is unimodular if and only if  $\tr(\ad_X^\h)=n\theta(X)$ for all $X\in\h$. Recall that given a Lie algebra $\h$, the map $\chi: \h\to \R$ defined by $\chi(X)=\operatorname{tr}(\ad_X)$ is a Lie algebra homomorphism, and its kernel is called the unimodular kernel of the Lie algebra $\h$. 
We have then the following result:

\begin{prop}\label{unimodularLCK}
Let $\h$ be a Lie algebra with an lcK structure $(\omega,\theta)$, and let $(\pi,V)$ be a $2n$-dimensional representation such that $\pi(X)=-\frac12\theta(X)\I+\rho(X)$ with $\rho(X)\in\mathfrak{u}(n)$ for all $X\in\h$. Then the Lie algebra $\g=\h\ltimes_\pi V$ with the lcK structure $(\omega',\theta')$  as above is unimodular if and only if the unimodular kernel of $\h$ is equal to  $\ker\theta$ and $\tr(\ad_A^\h)=n$.
\end{prop}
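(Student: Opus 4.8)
The plan is to reduce the statement to the semidirect-product unimodularity criterion recalled just above it, namely that $\g=\h\ltimes_\pi V$ is unimodular if and only if $\tr(\ad_X^\h)=n\,\theta(X)$ for every $X\in\h$. For completeness one re-derives this by computing $\ad^\g$ in block form with respect to the splitting $\g=\h\oplus V$: for $X\in\h$ the operator $\ad^\g_X$ is block-diagonal with blocks $\ad^\h_X$ on $\h$ and $\pi(X)$ on $V$, so $\tr(\ad^\g_X)=\tr(\ad^\h_X)+\tr(\pi(X))=\tr(\ad^\h_X)-n\,\theta(X)$, where we used that $\pi(X)=-\frac{1}{2}\theta(X)\I+\rho(X)$ with $\I$ the identity of the $2n$-dimensional space $V$ and that $\rho(X)\in\mathfrak u(n)$, viewed as a real $2n\times 2n$ matrix, has vanishing trace; while for $v\in V$ the operator $\ad^\g_v$ sends $\h$ into $V$ and kills $V$, hence has zero diagonal blocks and is traceless. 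Therefore $\g$ is unimodular exactly when the linear functionals $\chi$ and $n\,\theta$ on $\h$ coincide, where $\chi(X):=\tr(\ad_X^\h)$ is the unimodular character and $\ker\chi$ is by definition the unimodular kernel.

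It then remains to verify the elementary equivalence: for a fixed $A\in\h$ with $\theta(A)=1$ (such an $A$ exists since $\theta\neq0$, by our convention on lcs structures, so that $\ker\theta$ is a hyperplane), one has
\[ \chi=n\,\theta \iff \big(\ker\chi=\ker\theta \ \text{ and }\ \chi(A)=n\big). \]
The forward implication is immediate: $\ker(n\theta)=\ker\theta$ and $\chi(A)=n\,\theta(A)=n$. For the converse, $\ker\chi=\ker\theta$ forces $\chi\neq0$ (otherwise $\ker\chi=\h\neq\ker\theta$), so $\chi$ and $\theta$ are two non-zero functionals with the same hyperplane kernel, hence $\chi=c\,\theta$ for some $c\in\R$; evaluating at $A$ gives $c=\chi(A)=n$, so $\chi=n\,\theta$. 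Note that, granted $\ker\chi=\ker\theta$, the value $\chi(A)$ is independent of the choice of $A$ with $\theta(A)=1$, since two such elements differ by a vector of $\ker\theta=\ker\chi$; thus the condition $\tr(\ad_A^\h)=n$ is unambiguous. Combining this with the first paragraph yields the Proposition.

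No genuine obstacle is expected here: the only points needing a little care are the trace bookkeeping in the block-form computation of $\ad^\g$ — in particular that elements of $\mathfrak u(n)$, regarded as real matrices, are traceless, and that $\ad^\g_v$ for $v\in V$ is traceless — together with the harmless use of $\theta\neq0$ so that the proportionality argument for functionals applies. The first of these is exactly what underlies the criterion borrowed from \cite{origlia-construction}, so on top of that criterion the proof is a two-line linear-algebra argument.
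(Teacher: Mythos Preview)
Your proof is correct and follows exactly the approach the paper intends: the paper does not give an explicit proof but simply states the proposition as an immediate consequence of the criterion $\tr(\ad_X^{\h})=n\,\theta(X)$ cited from \cite{origlia-construction}, together with the definition of the unimodular kernel recalled just before. Your argument fills in precisely these details, including the block-form verification of the cited criterion and the elementary linear-algebra equivalence between $\chi=n\theta$ and the pair of conditions in the statement.
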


Note that in order to build unimodular examples we have to start with a non unimodular Lie algebra $\h$. In particular we need that   $\tr(\ad_A^\h)=n=\frac{\dim V}{2}\in\mathbb{N}$. This condition is enough when the commutator ideal $\h'=[\h,\h]=\ker\theta$.

Using Lemma \ref{Vaisman}, it is easy to see that the lcK structure constructed with Proposition \ref{construccionLCK} is not Vaisman. Indeed, the endomorphism $\ad_A^\g:\g\to\g$ is
\[\ad_A^\g=
\left(\begin{array}{c|c}
\ad_A^\h & \\
\hline
& -\frac12\I+\rho(A) \\
\end{array}\right),\]
with $\rho(A)$ skew-symmetric. Then $\ad_A^\g$ cannot be skew-symmetric, and therefore, the lcK structure is not Vaisman. Moreover, we will prove next that these Lie algebras do not admit any Vaisman structure when $n>1$:
 
\begin{prop}
Let $\g=\h\ltimes_\pi V$ be the unimodular solvable Lie algebra built as above with $\dim V=2n$ and $n>1$. Then $\g$ does not admit any Vaisman structure.
\end{prop}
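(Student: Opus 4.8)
The plan is to argue by contradiction: assuming that $\g=\h\ltimes_\pi V$ (built as above, so in particular $[\h,\h]=\ker\theta$) carries some Vaisman structure, I will show that the ideal $V$ must then be central in $\g$, which is impossible.

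I would first isolate three elementary features of $\g$. Recall $\pi=-\frac12\theta\,\I+\rho$ with $\rho(\h)\subset\mathfrak u(n)$, and by Remark \ref{ro-cero} the image $\rho(\h)$ is abelian, whence $\rho([\h,\h])=0$; as $[\h,\h]=\ker\theta$, this yields $\pi(X)=0$ for all $X\in\ker\theta$, while $\pi(X)$ is invertible whenever $\theta(X)\neq0$ (its eigenvalues lie in $-\frac12\theta(X)+\sqrt{-1}\,\R$, since $\rho(X)$ is skew-Hermitian). Consequently: (i) $\pi(\h)V=V$, so $[\g,\g]=[\h,\h]\oplus V$, which has codimension one in $\g$ because $[\h,\h]=\ker\theta$ is a hyperplane of $\h$; moreover $[[\g,\g],V]=[[\h,\h],V]+[V,V]=\pi([\h,\h])V=0$. (ii) $\mathcal Z(\g)\cap V=0$: a central $v\in V$ satisfies $\pi(X)v=[X,v]=0$ for every $X\in\h$, and choosing $X$ with $\theta(X)\neq0$ the invertibility of $\pi(X)$ forces $v=0$.

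Now suppose $(J,\Omega,\vartheta)$ is a Vaisman structure on $\g$, and let $A\in\g$ be as in Lemma \ref{Vaisman}: $\vartheta(A)=1$ and $A\in(\ker\vartheta)^\perp$ for the compatible metric. That lemma gives that $\ad_A$ is skew-symmetric for the metric, hence has purely imaginary spectrum. Since $\vartheta$ is a closed, non-zero $1$-form, $[\g,\g]\subseteq\ker\vartheta$, and both being hyperplanes by (i) we conclude $\ker\vartheta=[\g,\g]=[\h,\h]\oplus V$. Write $A=A_\h+A_V$ with $A_\h\in\h$, $A_V\in V$. As $V$ is an ideal it is $\ad_A$-invariant, with $\ad_A|_V=\pi(A_\h)$; its eigenvalues, contained in $-\frac12\theta(A_\h)+\sqrt{-1}\,\R$, are purely imaginary, forcing $\theta(A_\h)=0$, i.e. $A_\h\in\ker\theta=[\h,\h]$, so $\pi(A_\h)=0$ and $[A,V]=0$. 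Finally, from $\g=\R A\oplus\ker\vartheta$ we get $[\g,V]=[\R A,V]+[\ker\vartheta,V]=\{0\}$, using $[A,V]=0$ and, by (i), $[\ker\vartheta,V]=[[\g,\g],V]=0$. Thus $\{0\}\neq V\subseteq\mathcal Z(\g)$, contradicting (ii).

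The crux is the combination of two facts: that the Vaisman condition, through Lemma \ref{Vaisman}, makes $\ad_A$ skew-symmetric (hence with imaginary spectrum), and that the very shape $\pi=-\frac12\theta\,\I+\rho$ of the extension forces this to be incompatible with $\theta(A_\h)\neq0$ — so $\ad_A$ must annihilate $V$. The one point needing care is the identification $\ker\vartheta=[\h,\h]\oplus V$, which uses $[\h,\h]=\ker\theta$ to know $[\g,\g]$ has codimension exactly one; everything else is bookkeeping with $\mathfrak u(n)$-operators having purely imaginary spectrum. (The argument in fact uses the hypothesis only through $V\neq\{0\}$.)
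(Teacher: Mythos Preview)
Your argument rests on the claim that $[\h,\h]=\ker\theta$, which you take as part of ``built as above''. This is not justified: Proposition~\ref{unimodularLCK} only says that unimodularity of $\g$ forces the \emph{unimodular kernel} $\ker\chi$ of $\h$ to coincide with $\ker\theta$; the derived ideal $[\h,\h]$ is always contained in $\ker\chi$, but equality is nowhere assumed and can fail. Concretely, for $\h=\mathfrak r_2\mathfrak r_2$ --- precisely the case used later to recover the Oeljeklaus--Toma Lie algebras --- one has $[\h,\h]=\langle e_2,e_4\rangle$, which is two-dimensional, while $\ker\theta=\{a_1+a_3=0\}$ is the full hyperplane. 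So the extra hypothesis fails in one of the main intended applications.

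This breaks your proof at two points. First, $[\g,\g]=[\h,\h]\oplus V$ may have codimension strictly larger than one, so the identification $\ker\vartheta=[\g,\g]$ is unavailable. Second, from $\theta(A_\h)=0$ you only get $\pi(A_\h)=\rho(A_\h)\in\mathfrak u(n)$, which need not vanish (in the OT construction $\rho(e_1-e_3)$ is generically nonzero), so $[A,V]=0$ is unwarranted. Both facts are essential to your final contradiction. The paper's proof takes a different route: it uses vanishing of Morse--Novikov cohomology for Vaisman Lie algebras to write $\omega=d\eta-\vartheta\wedge\eta$, and then observes that on the abelian ideal $V$ the term $d\eta$ restricts to zero, leaving a $2$-form of rank at most two; for $\dim V=2n>2$ this is degenerate. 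That argument genuinely uses $n>1$, whereas yours claims not to --- which should itself have been a warning sign.
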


\begin{proof}
	Suppose that $\g$ admits a Vaisman structure $(\omega,\theta)$. Then we know that the Morse-Novikov cohomology vanishes in any degree, and therefore $\omega=d_\theta\eta=d\eta-\theta\wedge\eta$ for some $1$-form $\eta$.
	Let $\tilde\omega$ the be restriction of $\omega$ to $V\times V$, then we have that $\tilde\omega$ is a symplectic form on $V$. Moreover, $\tilde\omega=-\theta\wedge\eta$ since $V$ is Abelian. If $n>1$, then $\tilde\omega$ is degenerate.
\end{proof}

\subsection{Examples arising from $4$-dimensional lcK Lie algebras}
We summarize which Lie algebras in Table \ref{table:lck-solvable} are not unimodular, and therefore can be used to construct new examples of unimodular Lie algebras of dimension higher or equal than six with an lcK structure. They are: $\mathfrak{rr}_{3,0}$, $\mathfrak{rr}_{3,1}$, $\mathfrak{rr'}_{3,\gamma}$, $\mathfrak{r}_2\mathfrak{r}_2$, $\mathfrak{r'}_2$, $\mathfrak{r}_{4,\alpha,1}$, $\mathfrak{r}_{4,\alpha,\alpha}$, $\mathfrak{r'}_{4,\gamma,\delta}$, $\mathfrak{d}_{4,\lambda}$, and $\mathfrak{d'}_{4,\delta}$. 

Taking into account the lcK structures for each of these Lie algebras as exhibited in Table \ref{table:lck-solvable}, it can be shown, using Proposition \ref{unimodularLCK}, that the only lcK Lie algebras which admit a unimodular lcK extension are the following, where the complex structure is written in the frame $\{e_1,e_2,e_3,e_4\}$:

\
$\begin{array}{l} \mathfrak{r}_2\mathfrak{r}_2: \quad \quad \quad \; \\ 
\end{array}
\left\{\begin{array}{l}
\theta = \sigma (e^1+ e^3) \\
\omega =  \mu ( \frac{1+\sigma}{\sigma} e^{12} + e^{14} - e^{23} + \frac{\sigma+1}{\sigma} e^{34})  \\ \sigma\neq 0, \sigma>-\frac12, \mu\neq0, \frac{\mu(1+\sigma)}{\sigma}>0
\end{array}\right. \quad
{\tiny J = \left(\begin{array}{cccc}
0 & -1 & 0 & 0 \\
1 & 0 & 0 & 0 \\
0 & 0 & 0 & -1 \\
0 & 0 & 1 & 0
\end{array}\right)} $

\

\

$\begin{array}{l} \mathfrak{r}_{4,\alpha,\alpha}:\\ 
\alpha\not\in\{-1,0,1\} \end{array}  
\left\{\begin{array}{l}
\theta = -2\alpha e^4\\
\omega = \sigma e^{14}+e^{23}  \\
\sigma<0
\end{array}\right. \quad \quad \quad \quad \quad \quad \quad\quad\quad \;
{\tiny J= \left(\begin{array}{cccc}
0 & 0 & 0 & 1 \\
0 & 0 & -1 & 0 \\
0 & 1 & 0 & 0 \\
-1 & 0 & 0 & 0
\end{array}\right)} $

\

\

$\begin{array}{l} \mathfrak{r}^\prime_{4,\gamma,\delta}: \;\; \quad \quad \quad\\ 
\delta>0, \gamma\neq0\end{array} 
\left\{\begin{array}{l}
\theta = -2\gamma e^4 \\
\omega =\sigma e^{14}+e^{23}  \\
 \sigma<0
\end{array}\right. \quad \quad \quad \quad \quad \quad \quad \quad \quad \;
{\tiny J = \left(\begin{array}{cccc}
0 & 0 & 0 & 1 \\
0 & 0 & -1 & 0 \\
0 & 1 & 0 & 0 \\
-1 & 0 & 0 & 0
\end{array}\right)}$

\

\

$\begin{array}{l}\mathfrak{d}_{4,\frac{1}{2}}: \quad \quad \quad\quad \\ 
\end{array}  
\left\{\begin{array}{l}
\theta = \theta_4e^4 \\
\omega =\tau (e^{12}-(\sigma+1)e^{34}))  \\
 \tau>0, \sigma+1>0, \theta_4\neq0
\end{array}\right. \quad \quad \quad \quad \quad 
{\tiny J=\left(\begin{array}{cccc}
	0 & -1 & 0 & 0 \\
	1 & 0 & 0 & 0 \\
	0 & 0 & 0 & 1 \\
	0 & 0 & -1 & 0
\end{array}\right)}$

\

\

$\begin{array}{l}\mathfrak{d}_{4,\lambda}:\quad \quad \quad \quad \,\\ 
\lambda\not\in\{\frac{1}{2},1\} \end{array} 
\left\{\begin{array}{l}
\theta = (\lambda-2)e^4 \\
\omega =\sigma e^{14}+e^{23}  \\
 \sigma>0,  \lambda \neq 2
\end{array}\right. \quad  \quad \quad \quad \quad \quad \quad \quad \quad
{\tiny J=\left(\begin{array}{cccc}
0 & 0 & 0 & -{\lambda}  \\
0 & 0 & -1 & 0 \\
0 & 1 & 0 & 0 \\
\frac{1}{\lambda}& 0 & 0 & 0
\end{array}\right)} $

\

\

$\begin{array}{l}\mathfrak{d}^\prime_{4,\delta}: \quad \quad \quad\quad \;\\ 
\delta>0 \end{array}  
\left\{\begin{array}{l}
\theta =\mu e^4 \\
\omega =\sigma (e^{12}-(\delta+\mu) e^{34})  \\
\sigma<0, \delta+\mu<0, \mu\neq0
\end{array}\right. \quad \quad \quad \quad \quad
{\tiny J=\left(\begin{array}{cccc}
0 & -1 & 0 & 0 \\
1 & 0 & 0 & 0 \\
0 & 0 & 0 & 1 \\
0 & 0 & -1 & 0
\end{array}\right) }$

\begin{rmk}
Applying Proposition \ref{construccionLCK} to the Lie algebras $\mathfrak{r}_{4,\alpha,\alpha}$ and $\mathfrak{r}^\prime_{4,\gamma,\delta}$, we obtain Lie algebras of dimension greater than or equal to six, which are almost-Abelian Lie algebras. Recall that a Lie algebra is called almost-Abelian if it has an Abelian ideal of codimension one. Almost-Abelian Lie algebras admitting lcK structures were studied in \cite{andrada-origlia-MM}, where the second-named author and Adri\'an Andrada proved that the associated Lie groups admit no lattices, whenever the dimension is greater than four.
\end{rmk}

\begin{rmk}
If we extend the Lie algebras $\mathfrak{d}_{4,\frac{1}{2}}$, $\mathfrak{d}_{4,\lambda}$ and $\mathfrak{d}^\prime_{4,\delta}$ by Proposition \ref{construccionLCK}, we obtain almost-nilpotent Lie algebras of dimension greater than or equal to six with an lcK structure. We will explain in detail how to extend one of these Lie algebras in Example \ref{ex:ext-alm-nilp}. Recall that a Lie algebra is called almost-nilpotent if it has a nilpotent ideal of codimension one. The existence of lattices in almost-nilpotent Lie groups was studied in \cite{bock}.
\end{rmk}

We explain now how to extend one of the almost nilpotent cases:
\begin{exa}\label{ex:ext-alm-nilp}
Let us start with the $4$-dimensional Lie algebra $\mathfrak d'_{4,\delta}$ with $\delta\neq 0$ and structure constants $(\frac {\delta}{2}14 +24, -14+\frac{\delta}{2}24,-12+\delta34,0)$.
Then the non-vanishing Lie brackets are given by 
\[\displaystyle{[e_4,e_1]=\frac {\delta}{2}e_1-e_2,\quad [e_4,e_2]=e_1+\frac {\delta}{2}e_2,\quad [e_4,e_3]=\delta e_3,\quad [e_1,e_2]=e_3}.\]
We consider the lcK structure on $\mathfrak d'_{4,\delta}$ given by 
$$\left\{\begin{array}{l}
\theta =\mu e^4 \\
\omega =\sigma (e^{12}-(\delta+\mu) e^{34})  \\
\sigma<0, \delta+\mu<0, \mu\neq0
\end{array}\right., \quad  
J=\left(\begin{array}{cccc}
0 & -1 & 0 & 0 \\
1 & 0 & 0 & 0 \\
0 & 0 & 0 & 1 \\
0 & 0 & -1 & 0
\end{array}\right). $$ 

We show that, for any $n\in \mathbb N$, there is an lcK extension given by Proposition \ref{construccionLCK} of the Lie algebra $\mathfrak d'_{4,\delta}$ for a suitable choice of $\delta$ and $\mu$ in order to obtain a  $(2n+4)$-dimensional unimodular lcK Lie algebra $\g=\mathfrak d'_{4,\delta}\ltimes_\pi \R^{2n}$ for certain lcK representation $\pi$. It follows from Proposition \ref{unimodularLCK} that $\g$ is unimodular if and only if $2\delta=n\mu$. 

We define next the representation $\pi: \mathfrak d'_{4,\delta} \to \mathfrak{gl}(2n,\R)$ by  $\pi=-\frac12\theta \I+\rho$ for some representation $\rho:\mathfrak d'_{4,\delta}\to \mathfrak u(n)$. It follows from Remark \ref{ro-cero} that such a representation satisfies $\rho(e_1)=\rho(e_2)=\rho(e_3)=0$. 
Setting $\rho(e_4)$ in the orthonormal basis $\{u_1,v_1,\dots,u_n,v_n\}$ of $\R^{2n}$ given by 
$$\rho(e_4)=\begin{pmatrix}
	0 & a_1   && & \\
   -a_1 & 0   && & \\
	&& \ddots && \\
	&& &  0 & a_n  \\
	&& &  -a_n & 0   \\
\end{pmatrix}$$
we obtain that the only non-zero new Lie brackets are 
$$[e_4,u_i]=-\frac{\delta}{n}u_i-a_i v_i, \quad  [e_4,v_i]=a_i u_i -\frac{\delta}{n}v_i,$$
for $i=1,\dots,n$. The complex structure on $\g$ is given by $J(e_1)=e_2$, $J(e_3)=-e_4$ and $J(u_i)=v_i$ for $i=1,\dots,n$.

It is easy to verify that 
$$\left\{\begin{array}{l}
\theta' =\frac{2\delta}{n} e^4 \\
 {\displaystyle \omega' =\sigma (e^{12}-\frac{(n+2)\delta}{n} e^{34})  + \sum_{i=1}^n u^i\wedge v^i}\\
\text{ with} \quad \sigma<0, \delta<0
\end{array}\right.$$
is an lcK structure on the $(2n+4)$-dimensional Lie algebra $\g$ for any $n\in \mathbb N$. Note that we can write $\g=\R e_4\ltimes(\h_3\times\R^{2n})$, therefore $\g$ is an almost-nilpotent Lie algebra.
\end{exa}

\subsection{OT Lie algebras as lcK extensions }
Oeljeklaus-Toma manifolds (OT manifolds) are compact complex non-K\"aher manifolds which arise from certain number fields, and they can be considered as generalizations of the Inoue sufaces of type $S^0$. It was proved in \cite{oeljeklaus-toma} that certain OT manifolds (those of type $(s,1)$) admit lcK metrics. According to \cite{kasuya}, the OT manifolds are solvmanifolds.
Moreover, it can be seen that the complex structure is induced by a left-invariant one on the corresponding simply connected solvable Lie group.
These manifolds provided a counterexample to a conjecture made by Vaisman according to which the first Betti number of a compact lcK manifold is odd (see \cite{oeljeklaus-toma}).
We show in this subsection that the Lie algebras associated to these OT solvmanifolds of type $(s,1)$ endowed with its lcK structure can be obtained using our construction given in Proposition \ref{construccionLCK}.

We recall the definition of the Lie algebra associated to the $(2n+2)$-dimensional Oeljeklaus-Toma solvmanifold of type $(s,1)$ (see \cite{kasuya}), which we denote by $\g_{\text{OT}}$. The Lie brackets on $\g_{\text{OT}}$ are given by 
\begin{equation}\label{OT-LieBracket}
[x_i,y_i]=y_i, \quad [x_i,z_1]=-\frac12 z_1 + c_i z_2, \quad [x_i,z_2]= -c_i z_1 -\frac12 z_2,
\end{equation} 
in the basis $\{x_1,\dots,x_n,y_1,\dots,y_n,z_1,z_2\}$ for some $c_i\in\R$. The complex structure on $\g_{\text{OT}}$ is $Jx_i=y_i$ and $Jz_1=z_2$. 
The lcK structure on $\g_{\text{OT}}$ is given by
\begin{equation} \label{LCK_OT}
\left\{\begin{array}{l}
\theta = \sum x^i\\
\omega = 2\sum x^i\wedge y^i  + \sum_{i\neq j}x^i\wedge y^j + z^1\wedge z^2
\end{array}\right. 
\end{equation}
in the dual basis $\{x^1,\dots,x^n,y^1,\dots,y^n,z^1,z^2\}$,
where the associated Hermitian metric is defined by $g(\cdot ,\cdot)=\omega(\cdot ,J\cdot)$.

\ 

Next we show how to recover the $6$-dimensional OT Lie algebra using Proposition \ref{construccionLCK} and Theorem \ref{thm:classification-lck}.
Let us consider the $4$-dimensional Lie algebra $\mathfrak r_2\mathfrak r_2$ with structure constants given by $(0,-12,0,-34)$. 

According to Table \ref{table:lck-solvable} this Lie algebra admits many non equivalent LCK structures up to Lie algebra complex automorphisms. The one we are interested now is 
$$ \left\{\begin{array}{l}
\theta = \sigma e^1 +\tau e^3\\
\omega = \frac{1+\sigma}{\tau}e^{12} + e^{14} -e^{23} + \frac{\tau+1}{\sigma} e^{34}  \\
\text{	with }\sigma\tau\neq0, \sigma+\tau\neq -1,  \frac{1+\sigma}{\tau}>0, \frac{\tau+1}{\sigma}>0, \sigma\leq\tau
\end{array}\right.. $$ The complex structure is given by $J(e^1)=e^2$ and $J(e^3)=e^4$ in terms of the coframe $\{e^1,e^2,e^3,e^4\}$.

Taking into account Proposition \ref{unimodularLCK} in order to obtain a $6$-dimensional unimodular extension of this Lie algebra, we can simplify the lcK form to
$$ \left\{\begin{array}{l}
\theta = e^1 +e^3\\
\omega = 2e^{12} + e^{14} -e^{23} + 2e^{34}
\end{array}\right.. $$

Let $\g$ be the $6$-dimensional vector space $\g=\mathfrak r_2\mathfrak r_2\oplus \R^{2}.$ We define $\pi\colon \mathfrak r_2\mathfrak r_2 \to \mathfrak{gl}(2,\R)$ by
\begin{eqnarray*}
\pi(e_2)&=&0, \\
\pi(e_4)&=&0, \\
\pi(e_1) &=& -\frac12\I +
\begin{pmatrix}
0 & -c_1 \\
c_1 & 0 
\end{pmatrix}=\begin{pmatrix}
-\frac12 & -c_1 \\
c_1 & -\frac12 
\end{pmatrix},\\
\pi(e_3)&=& -\frac12\I +
\begin{pmatrix}
0 & -c_2 \\
c_2 & 0 
\end{pmatrix}=\begin{pmatrix}
-\frac12 & -c_2 \\
c_2 & -\frac12 
\end{pmatrix},
\end{eqnarray*}
in the orthonormal basis $\{e_5,e_6\}$ of $\R^2$ with $J_0e_5=e_6$. It is easy to check that $\pi$ satisfies conditions of Proposition \ref{construccionLCK}, and therefore  
\[\g=\mathfrak r_2\mathfrak r_2\ltimes_\pi \R^{2}\]
is a $6$-dimensional unimodular Lie algebra admitting a lcK structure $(\omega,\theta)$ given by 	
$$ \left\{\begin{array}{l}
\theta' = e^1 +e^3\\
\omega' = 2e^{12} + e^{14} -e^{23} + 2e^{34} + e^{56} 
\end{array}\right.. $$
Clearly, $(\g,\omega',\theta')$ is isomorphic to the OT Lie algebra of dimension $6$ with lcK structure given by \eqref{LCK_OT} with $n=2$.

\smallskip

To generalize this case to higher dimensions we consider the non-unimodular $2n$-dimensional Lie algebra $\mathfrak{aff}(\R)^n$ with structure equations given by $ [e_i,f_i]=f_i$ for $i=1,\dots,n$ in the basis $\{e_1,f_1,\dots,e_n,f_n\}$. This Lie algebra admits a complex structure $Je_i=f_i$ and $Jf_i=-e_i$ for $i=1,\dots,n$. A lcK structure on $\mathfrak{aff}(\R)^n$ can be defined by 
$$
\left\{\begin{array}{l}
\theta = \sum e^i\\
\omega = 2\sum e^i\wedge f^i  + \sum_{i\neq j}e^i\wedge f^j
\end{array}\right. 
$$
in the dual basis $\{e^1,f^1,\dots,e^n,f^n\}$,
with Hermitian metric $g(\cdot ,\cdot)=\omega(\cdot ,J\cdot)$.
Let $\g$ be the $(2n+2)$-dimensional vector space $\g=\mathfrak{aff}(\R)^n\oplus \R^{2}$. We extend the complex structure $J$ in $\mathfrak{aff}(\R)^n$ to $\g$ by $Ju_1=u_2$ where $\{u_1,u_2\}$ denotes an orthonormal basis of $\R^{2}$.
We define $\pi:\mathfrak{aff}(\R)^n \to \mathfrak{gl}(2,\R)$ by
$$
\pi(e_i)= 
-\frac12\I +
\begin{pmatrix}

0 & -c_i \\
c_i & 0 
\end{pmatrix}
=\begin{pmatrix}

-\frac12 & -c_i \\
c_i & -\frac12 
\end{pmatrix},
$$
and  $\pi(f_i)=0$ for $i=1,\dots,n$ and $c_i\in\R$. It is easy to check that $\pi$ satisfies Proposition \ref{construccionLCK}, and therefore the unimodular Lie algebra
\[\g=\mathfrak r_2\mathfrak r_2\ltimes_\pi \R^{2n},\]
admits a lcK structure, which we still denote by $(\omega, \theta)$, given by
$$
\left\{\begin{array}{l}
\theta = \sum e^i\\
\omega = 2\sum e^i\wedge f^i  + \sum_{i\neq j}e^i\wedge f^j + u^1\wedge u^2
\end{array}\right. .
$$

It is clear that $\g$ is isomorphic to the $(2n+2)$-dimensional Oeljeklaus-Toma Lie algebra with Lie bracket given by \eqref{OT-LieBracket}. Indeed, $\phi\colon \g\to \g_{\text{OT}}$, $\phi(e_i)=x_i$, $\phi(f_i)=y_i$ for $i=1,\dots,n$ and $\phi(u_i)=z_i$ for $i=1,2$ is a Lie algebra isomorphism which commutes with the complex structures and it also preserves the lcK forms.

\subsection{From coK\"ahler to lcK}
In \cite{angella-bazzoni-parton}, the first-named author and G. Bazzoni and M. Parton observed that every lcs structure on $4$-dimensional Lie algebras can be constructed either as a solution to the cotangent extension problem \cite[Corollary 1.14]{angella-bazzoni-parton}, or as a mapping torus over a contact $3$-dimensional Lie algebra \cite[Theorem 1.4]{angella-bazzoni-parton}, or with a similar construction starting from a $3$-dimensional cosymplectic Lie algebra \cite[Proposition 1.8]{angella-bazzoni-parton}.

We recall that, on a $(2n-1)$-dimensional Lie algebra $\mathfrak h$, an {\em almost-contact metric structure} is given by $(\eta,\xi,\Phi,g)$ where: $\eta$ is a $1$-form and $\xi$ a vector field such that
\begin{equation}\tag{cK1}
\eta(\xi)=1,
\end{equation}
$\Phi\in\mathrm{End}_{\mathbb R}(\mathfrak h)$ satisfies
\begin{equation}\tag{cK2}
\Phi^2=-\mathrm{id}+\eta\otimes\xi,
\end{equation}
and $g$ is a Riemannian metric such that
\begin{equation}\tag{cK3}
g(\Phi x,\Phi y)=g(x,y)-\eta(x)\eta(y)
\end{equation}
for any $x,y\in\mathfrak h$. Set $\omega:=g(\_,\Phi\_)$. If
\begin{equation}\tag{cK4}
d\eta=d\omega=0,
\end{equation}
then in particular $(\eta,\omega)$ is a {\em cosymplectic structure}.
For a cosymplectic structure, we denote by $R$ the Reeb vector, determined by $\iota_R\omega=0$ and $\iota_R\eta=1$; then one has the decomposition $\mathfrak h^*=\langle \eta \rangle \oplus \langle R \rangle^\circ$, where $\langle R \rangle^\circ$ denotes the annihilator of $\langle R \rangle$, that coincides with the kernel of the map $\omega^{n-1}\wedge\_$.
Recall also that $(\eta,\xi,\Phi)$ is called {\em normal} when
\begin{equation}\tag{cK5}
\mathrm{Nij}_\Phi+2d\eta\otimes\xi=0.
\end{equation}
When $(\eta,\xi,\Phi,g)$ is both cosymplectic and normal, then it is called {\em coK\"ahler}. We refer to {\itshape e.g.} \cite{blair} for further details.

\begin{prop}\label{cosymp_lcs}
Let $(\mathfrak h,\eta, \xi, \Phi, g)$ be a coK\"ahler Lie algebra of dimension $2n-1$, endowed with a derivation $D$ such that $D\omega=\alpha\omega$ for some $\alpha\neq 0$, $D\eta=D\xi=0$ , and $D\Phi=\Phi D$. Then $\mathfrak g=\mathfrak h\rtimes_D\mathbb R$ admits a natural lcK structure.
\end{prop}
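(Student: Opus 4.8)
\emph{Plan and construction.} The plan is to build a canonical lcK structure on $\g=\h\rtimes_D\R$ out of $(\eta,\xi,\Phi,g)$ and $D$, and then to check, in order: that $(\Omega,\theta)$ is an lcs structure ($d\theta=0$, $d\Omega=\theta\wedge\Omega$, $\Omega$ non-degenerate), that $(J,\Omega)$ is almost-Hermitian, and that $J$ is integrable --- the last being the real content. Let $e_0$ generate the $\R$-summand, so $\ad_{e_0}|_\h=D$, and let $e^0\in\g^*$ be its dual; then $e^0$ is closed, and for a form $\phi$ on $\h$ (extended to $\g$ by $\iota_{e_0}\phi=0$) the Chevalley--Eilenberg differential of $\g$ is $d_\g\phi=d_\h\phi+e^0\wedge(D\cdot\phi)$, where $D\cdot\phi$ is the natural action of the derivation $D$ on the exterior algebra. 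On the vector-space splitting $\g=\ker\eta\oplus\langle\xi\rangle\oplus\langle e_0\rangle$ I set $J|_{\ker\eta}:=\Phi$ and $J\xi:=e_0$, $Je_0:=-\xi$; this squares to $-\I$ because (cK2) forces $\Phi\xi=0$, $\eta\circ\Phi=0$ and $\Phi^2|_{\ker\eta}=-\I$. I extend $g$ to a metric $g'$ on $\g$ by making $e_0$ a unit vector orthogonal to $\h$, let $\Omega$ be the fundamental $2$-form of $(J,g')$, and set $\theta:=\alpha\,e^0$. Using $\Phi\xi=0$, $\eta\circ\Phi=0$ and $g(\xi,\cdot)=\eta$ (all forced by (cK2)--(cK3)) one finds that $\Omega$ equals, up to an overall sign convention, $\omega+\eta\wedge e^0$, with $\omega=g(\_,\Phi\_)$ extended by $\iota_{e_0}\omega=0$.

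\emph{The lcs identity and Hermitian compatibility.} Since $e^0$ is closed, $d\theta=0$. From $D\xi=0=D\eta$ we get $D\cdot\eta=0$, hence $d_\g\eta=d_\h\eta=0$ by (cK4), so $d_\g(\eta\wedge e^0)=0$; and $d_\g\omega=d_\h\omega+e^0\wedge(D\cdot\omega)=0+\alpha\,e^0\wedge\omega$ by (cK4) and $D\omega=\alpha\omega$. Hence $d\Omega=\pm\alpha\,e^0\wedge\omega=\theta\wedge\Omega$, the last step because $e^0\wedge\eta\wedge e^0=0$. Non-degeneracy of $\Omega$ follows from $\Omega^{\wedge n}=\pm n\,\omega^{\wedge(n-1)}\wedge\eta\wedge e^0\neq0$, since $\eta\wedge\omega^{\wedge(n-1)}$ is a volume form on $\h$ ($(\eta,\omega)$ being cosymplectic). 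For compatibility, $J$ is a $g'$-isometry --- on $\ker\eta$ by (cK3), and on $\langle\xi,e_0\rangle$ because $\xi,e_0$ are orthonormal and $J$ swaps them --- so $\Omega$ is $J$-invariant with $\Omega(X,JX)=g'(X,X)>0$ for $X\neq0$, and the Hermitian metric determined by $(J,\Omega)$ is exactly $g'$. As $\alpha\neq0$ the Lee form $\theta=\alpha\,e^0$ is non-zero, so the resulting structure is genuinely lcK and not K\"ahler.

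\emph{Integrability of $J$ --- the main obstacle.} What remains, and carries the weight of the proof, is $N_J=0$; I would argue by cases on where the two arguments lie. First, $\ker\eta$ is an ideal of $\g$: $[\ker\eta,\ker\eta]\subseteq\ker\eta$ since (cK4) gives $\eta([X,Y])=-d\eta(X,Y)=0$, and $D(\h)\subseteq\ker\eta$ since $D\eta=0$. On $\ker\eta\times\ker\eta$, separating the $\h$- and $e^0$-components, $N_J$ reduces to the almost-contact Nijenhuis tensor $N_\Phi$ plus terms proportional to $d\eta$; normality (cK5) together with $d\eta=0$ gives $N_\Phi=0$, so $N_J$ vanishes there. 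For $X\in\ker\eta$, the mixed terms $N_J(X,e_0)$ and $N_J(X,\xi)$ expand into combinations of $D(\Phi X)$, $\Phi(DX)$, $[X,\xi]$, $[\Phi X,\xi]$ and $D\xi$, which cancel thanks to $D\Phi=\Phi D$, $D\xi=0$ and the coK\"ahler relations $\eta\circ\Phi=0$ and $[\Phi X,\xi]=\Phi[X,\xi]$ (the latter itself a consequence of $N_\Phi=0$); finally $N_J(\xi,e_0)=0$ since $[\xi,e_0]=-D\xi=0$ and $[\xi,\xi]=[e_0,e_0]=0$. Hence $J$ is integrable, and $(J,g')$ is the asserted lcK structure on $\g$. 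This is the lcK counterpart of the cosymplectic-to-lcs mapping-torus construction \cite[Proposition 1.8]{angella-bazzoni-parton}, and the only genuinely delicate point is keeping track of the $\xi$- and $e^0$-components of $N_J$, where the hypothesis $D\Phi=\Phi D$ is precisely what makes the cancellations work.
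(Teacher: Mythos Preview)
Your proof is correct and follows essentially the same approach as the paper: the same almost-complex structure $J$ (extending $\Phi$ by $J\xi=e_0$, $Je_0=-\xi$), the same $\Omega=\omega\pm\eta\wedge e^0$, and the same strategy of checking the lcs identity from $D\omega=\alpha\omega$ and then verifying $N_J=0$ case by case. The only organisational differences are that you split the Nijenhuis computation along the finer decomposition $\ker\eta\oplus\langle\xi\rangle\oplus\langle e_0\rangle$ (the paper works with $\h\oplus\langle e_0\rangle$, allowing $X\in\h$ to have a $\xi$-component), and you obtain $J$-invariance of $\Omega$ by arguing that $J$ is a $g'$-isometry rather than computing $\Omega(J\cdot,J\cdot)$ directly; both lead to the same cancellations, governed by $D\Phi=\Phi D$, $D\xi=0$, and the coK\"ahler identity $[\Phi X,\xi]=\Phi[X,\xi]$.
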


\begin{proof}
By \cite[Proposition 1.8]{angella-bazzoni-parton}, we already know that $\mathfrak g$ has a natural lcs structure. For the sake of completeness, we briefly recall the construction. On $\mathfrak g = \mathfrak h \rtimes_D \mathbb R$, we define
$$ \theta(X,a):=-\alpha a, \qquad \Omega:=\omega+\eta\wedge\theta .$$
Therefore one has
$$ d^{\mathfrak{g}}\Omega = d^{\mathfrak{h}}\omega + \frac{1}{\alpha} D^*\omega \wedge \theta = \omega\wedge\theta = \theta\wedge\Omega . $$

It suffices to show that $\Omega$ is actually lcK, that is, there is a natural integrable complex structure $J$ on $\mathfrak g$ such that $J\Omega=\Omega$. We set, see \cite{sasaki-hatakeyama}, see also \cite[Section 6.1]{blair},
$$ J(X,a) := \left( \Phi X-a\xi, \eta(X) \right) . $$

We recall that $\Phi\xi=0$ and $\eta\circ\Phi=0$, see {\itshape e.g.} \cite[Theorem 4.1]{blair}, whence $J^2=-\mathrm{id}$.

We claim that $\mathrm{Nij}_J=0$. Recall that $\mathrm{Nij}_J:=-[\_,\_]+[J\_,J\_]-J[J\_,\_]-J[\_,J\_]$. Following the computations in \cite[Section 6.1]{blair}, we compute
\begin{eqnarray*}
	\mathrm{Nij}_J((X,0),(Y,0)) &=& \left( \mathrm{Nij}_\Phi(X,Y) + \eta(X)D\Phi Y-\eta(Y)D\Phi X -\eta(X)\Phi D Y+\eta(Y) \Phi DX, \right. \\
	&& \left. - \eta[\Phi X,Y]-\eta[X,\Phi Y]-\eta(X)\eta(DY)+\eta(Y)\eta(DX) \right) \\
	&=& \left( \mathrm{Nij}_\Phi(X,Y), 
	 (d\eta)(\Phi X,Y)+(d\eta)(X,\Phi Y)-\eta\wedge D\eta(X,Y) \right) \\
	&=& (0,-\eta\wedge D\eta(X,Y)) = 0 ,\\
	\mathrm{Nij}_J((X,0),(0,a)) &=& \left( -aDX-a[\Phi X,\xi]-a\eta(X)D\xi-a\Phi (D(\Phi X))+a\Phi[X,\xi], \right. \\
	&& \left. -a\eta(D(\Phi X))-a\eta([X,\xi]) \right)\\
	&=& \left( -aDX-a[\Phi X,\xi]-a\Phi^2 (DX)+a\Phi[X,\xi], \right. \\
	&& \left. -a\eta\circ\Phi(D X)+a (d\eta)(X,\xi) \right) \\
	&=& (-a[\Phi X,\xi]-a\eta(DX)\xi +a\Phi[X,\xi],0)=0,
\end{eqnarray*}
where we use that $\mathrm{Nij}_{\Phi}=0$.

We claim that $J\Omega=\Omega$. Indeed we compute
\begin{eqnarray*}
\Omega(J(X,a), J(X,b))
&=& (\omega+\eta\wedge\theta)((\Phi X-a\xi,\eta(X)), (\Phi Y-b\xi,\eta(Y))) \\
&=& \omega(\Phi X-a\xi,\Phi Y-b\xi)-\alpha\eta(\Phi X-a\xi)\eta(Y)+\alpha\eta(\Phi Y-b\xi)\eta(X) \\
&=& \omega(\Phi X,\Phi Y)-b\omega(\Phi X,\xi)-a\omega(\xi,\Phi Y)+ab\omega(\xi,\xi)\\
&& -\alpha\eta(\Phi X)\eta(Y)+a\alpha\eta(\xi)\eta(Y)+\alpha\eta(\Phi Y)\eta(X)-b\alpha\eta(\xi)\eta(X) \\
&=& \omega(X,Y)-\eta(Y)\omega(X,\xi)-b\omega(\Phi X,\xi)-a\omega(\xi,\Phi Y)+ab\omega(\xi,\xi)\\
&& -\alpha\eta(\Phi X)\eta(Y)+a\alpha\eta(\xi)\eta(Y)+\alpha\eta(\Phi Y)\eta(X)-b\alpha\eta(\xi)\eta(X) \\
&=& \omega(X,Y)-\alpha\eta(\Phi X)\eta(Y)+a\alpha\eta(Y)+\alpha\eta(\Phi Y)\eta(X)-b\alpha\eta(X) \\
&=& \omega(X,Y)+a\alpha\eta(Y)-b\alpha\eta(X) \\
&=& \Omega((X,a),(Y,b)),
\end{eqnarray*}
where we used that $\omega(\Phi X, \Phi Y)=g(\Phi X,\Phi^2 Y)=g(\Phi X,-Y+\eta(Y)\xi)=-g(\Phi X,Y)+\eta(Y)g(\Phi X,\xi)=-\omega(Y,X)+\eta(Y)\omega(\xi,X)=\omega(X,Y)-\eta(Y)\omega(X,\xi)$ and $\omega(\xi,Z)=-\omega(Z,\xi)=-g(Z,\Phi\xi)=0$.
\end{proof}

\begin{rmk}
The Lie algebra $\mathfrak g$ is unimodular if and only if $\mathfrak h$ is unimodular and $D\eta=-\alpha(n-1)\eta+\zeta$ for some $\zeta\in\langle R\rangle^\circ$. If $\mathfrak h$ is unimodular then the lcK structure $(\Omega,\vartheta)$ on $\mathfrak g$ is not exact.

Indeed, we recall the idea in \cite{angella-bazzoni-parton}: unimodularity for $\mathfrak{g}$ is equivalent to the generator of $\wedge^n\mathfrak g^*$ being non-exact, that is equivalent to $d\wedge^{n-1}\mathfrak g^*=0$. Since $\wedge^{n-1}\mathfrak g^*=\langle \omega^{n-1}\wedge\eta \rangle \oplus \wedge^{2n-2}\mathfrak h^*\wedge \theta$, we compute
$$ d^{\mathfrak{g}}(\omega^{n-1}\wedge \eta)=-\frac{1}{\alpha}\left( \alpha(n-1)+\beta\right) \omega^{n-1}\wedge\eta\wedge\theta, \qquad d^{\mathfrak{g}}(\varphi\wedge\theta)=d^{\mathfrak{h}}\varphi\wedge\theta, $$
where $\varphi\in\wedge^{2n-2}\mathfrak h^*$, and we decomposed $D\eta=\beta\eta+\zeta$ with $\zeta\in\langle R \rangle^\circ$. The statement follows.
\end{rmk}

Next we show an example of a Lie algebra admitting a lcK structure in Table \ref{table:lck-solvable} constructed from a $3$-dimensional coK\"ahler Lie algebra.
Recall from \cite{fino-vezzoni} that coK\"ahler Lie algebras in dimension $2n + 1$ are in one-to-one correspondence with $2n$-dimensional K\"ahler Lie algebras endowed with a skew-adjoint derivation $B$ which commutes with
its complex structure.

Let $(\R^2, e^1\wedge e^2)$ be a $2$-dimensional K\"ahler Lie algebra, where $Je^1=e^2$ in the orthonormal coframe $\{e^1,e^2\}$. Consider the derivation of $\R^2$ given by $B(e^1)=e^2$ and $B(e^2)=-e^1$.
Then the Lie algebra $\h=\R^2\rtimes_B \R\xi$ admits a coK\"ahler structures $(\eta, \xi, \Phi, g)$ where $g$ is the orthonormal extension of the K\"ahler metric in $\R^2$, $\eta$ is the dual $1$-form of $\xi$ and $\Phi=J$ in $\R^2$ and $\Phi(\xi)=0$.

Let us consider now the derivation $D: \h \to \h$ given by $D(\xi)=0$, $D(e^1)=e^2$ and $D(e^2)=-e^1$.
Finally, the Lie algebra $\mathfrak g=\mathfrak h\rtimes_D\mathbb R$ admits a natural lcK structure according Proposition \ref{cosymp_lcs}, and it is easy to see that this Lie algebra is isomorphic to the $4$-dimensional Lie algebra $\mathfrak r'_2$ on Table \ref{table:lck-solvable}.

\begin{rmk}
Concerning the construction in \cite{angella-bazzoni-parton} as a mapping torus over a contact $3$-dimensional Lie algebra \cite[Theorem 1.4]{angella-bazzoni-parton}, we should mention that this construction, in the Hermitian case, corresponds to the known relation between lcK and Sasakian structures, see \cite{andrada-origlia-Vaisman}. In particular, the subclass of Vaisman Lie algebras can be constructed in this way.
\end{rmk}

\begin{landscape}

\appendix
\section{Table of lcK structures on four-dimensional Lie algebras}\label{app:table}

\def\arraystretch{1.4}
\tiny
\begin{table}[h]
\centering
{\resizebox{1.4\textwidth}{!}{
\begin{tabular}{clcl|lll|l|l}
\hline\toprule
   Lie algebra & & cplx structure & & \multicolumn{3}{|c|}{non-K\"ahler lcK structure} & Vaisman & note \\    
   & & & & Lee form $\theta$ & positive form $\Omega$ & parameters & & \\    
\toprule
    \hline
$\mathbb{R}^4$ & & & & no lcK structure & & & no & torus \\
    \hline\hline
\multirow{3}{*}{$\mathfrak{gl}_2$} & & \multirow{3}{*}{$J_{1,\mu}$} & \multirow{2}{*}{ $\mu \in \mathbb R \setminus \{0\}$ } &$-\mu_{1}  e^{4}$ & $\omega_{12}  e^{12} + \omega_{13}  e^{13} + \omega_{23} \mu_{1}  e^{14} + \omega_{23}  e^{23} + \frac{1}{2} \, \omega_{12} \mu_{1}  e^{24} - \frac{1}{2} \, \omega_{13} \mu_{1}  e^{34}$ & $\omega_{12}\geq\omega_{13} > 0$, $\omega_{23}\geq0$, $\omega_{12}\omega_{13}-\omega_{23}^2 > 0$ & $\omega_{23}=0$, $\omega_{12}=\omega_{13}$ & \\
\cline{5-9}
&&&& $\theta_{4}  e^{4}$ & $\omega_{12}  e^{12} + \omega_{12}  e^{13} - \frac{1}{2} \, \omega_{12} \theta_{4}  e^{24} + \frac{1}{2} \, \omega_{12} \theta_{4}  e^{34}$ & $\theta_4\neq-\mu_1$, $\omega_{12}>0$, $\frac{\theta_4}{\mu_1} < 0$ & \multirow{2}{*}{always} & \\
\cline{4-7}
&&& $\mu \in \mathbb C \setminus (\mathbb R \cup \sqrt{-1} \mathbb R)$ & $\theta_{4}  e^{4}$ & $\omega_{12}  e^{12} + \omega_{12}  e^{13} - \frac{1}{2} \, \omega_{12} \theta_{4}  e^{24} + \frac{1}{2} \, \omega_{12} \theta_{4}  e^{34}$ & $\omega_{12}>0$, $\frac{\theta_4}{\mu_1} < 0$ & & \\
    \hline
\multirow{2}{*}{$\mathfrak{u}_2$} & & $J_{a,b}$ & $a\neq0$, $b\neq0$ & $\theta_4e^4$ & $\omega_{23}\theta_4e^{14}+\omega_{23}e^{23}$ & $\theta_4\neq0$, $\omega_{23}<0$, $\frac{\theta_4}{b}>0$ & \multirow{2}{*}{always} & \\
\cline{3-7}
 & & $J_{0,b}$ & $b\neq0$ & $\theta_4e^4$ & $\omega_{23}\theta_4e^{14}+\omega_{23}e^{23}$ & $\theta_4\not\in\{0,-\frac{1}{b}\}$, $\omega_{23}<0$, $\frac{\theta_4}{b}>0$ & & \\
    \hline\hline
$\mathfrak{rh}_3$ & & & & $-e^4$ & $\sigma e^{12}+\sigma e^{34}$ & $\sigma>0$ & always & primary Kodaira \cite{belgun} \\
    \hline
$\mathfrak{rr}_{3,0}$ & & & & $\delta e^1 + \frac{\sigma}{\delta} e^3$ & $\frac{\delta(\delta+1)}{\sigma}e^{12}+e^{14}-e^{23}+\frac{\sigma}{\delta^2}e^{34}$ & $\delta>0$, $\sigma>0$ & always & \\
    \hline
$\mathfrak{rr}_{3,1}$ & & & & $-2e^1$ & $\sigma e^{14}-e^{23}$ & $\sigma>0$ & never & \\
    \hline
$\mathfrak{rr}^\prime_{3,0}$ & & & & no lcK structures & & & no & hyperelliptic \cite{belgun} \\
    \hline
\multirow{2}{*}{$\mathfrak{rr}^\prime_{3,\gamma}$} & \multirow{2}{*}{$\gamma>0$} & $J_1$ & & $-2\gamma e^1$ & $\sigma e^{14}+e^{23}$ & $\sigma>0$ & \multirow{2}{*}{never} & \\
    \cline{3-7}
& & $J_2$ & & $-2\gamma e^1$ & $\sigma e^{14}-e^{23}$ & $\sigma>0$ & & \\
    \hline
\multirow{3}{*}{$\mathfrak{r}_2\mathfrak{r}_2$} & & & & $-e^3$ & $\omega_{12}(e^{12}-e^{14}+e^{23})+\omega_{34} e^{34}$ & $\omega_{34}>\omega_{12}>0$ & \multirow{3}{*}{never} & \\
\cline{3-7}
& & & & $-e^1$ & $\omega_{12} e^{12} + \omega_{34}(-e^{14}+e^{23}+e^{34})$ & $\omega_{12}>\omega_{34}>0$ & \\
\cline{3-7}
& & & & $\sigma e^1+\tau e^3$ & $\mu ( \frac{1+\sigma}{\tau} e^{12} + e^{14} - e^{23} + \frac{\tau+1}{\sigma} e^{34}) $ & $\sigma\tau\neq 0$, $\sigma+\tau\neq-1$, $\mu\neq0$, $\frac{\mu(1+\sigma)}{\tau}>0$, $\frac{\mu(1+\tau)}{\sigma}>0$, $\frac{\sigma+\tau+1}{\sigma\tau}>0$ & \\
    \hline
\multirow{5}{*}{$\mathfrak{r}^\prime_{2}$} & & \multirow{3}{*}{$J_1$} & & $\theta_1 e^1+\theta_2 e^2$ & $\omega_{13}(e^{13}+\frac{\theta_2}{\theta_1}(e^{14}+e^{23})+\frac{\theta_2^2-\theta_1}{\theta_1(\theta_1+1)}e^{24})$ & $\omega_{13}>0$, $\theta_1\neq-1$, $\theta_1\neq0$, $\theta_2>0$, $\frac{\theta_2^2-\theta_1}{\theta_1(\theta_1+1)}>\frac{\theta_2^2}{\theta_1^2}$ & \multirow{5}{*}{never} & \\
    \cline{5-7}
& & & & $-2e^1$ & $\omega_{12}(e^{12}+e^{34})+\omega_{13}(e^{13}+e^{24})$ & $\omega_{13}>0$, $\omega_{13}^2-\omega_{12}^2>0$, $\omega_{12}>0$ & & \\
    \cline{5-7}
& & & & $\theta_1e^1$ & $\omega_{24}(-(\theta_1+1)e^{13}+e^{24})$ & $\omega_{24}>0$, $\theta_1+1<0$ & & \\
    \cline{3-7}
& & \multirow{2}{*}{$J_2^{a,b}$} & $(a,b)\neq (0,1)$ & $-2e^1$ & $\omega_{12}e^{12}+e^{34}$ & $b\omega_{12}>0$ & & \\
    \cline{4-7}
& & & $(a,b) = (0,1)$ & $-2e^1$ & $\sigma e^{12}+\omega_{34} e^{34}$ & $\sigma>0$ & & \\
	\hline
$\mathfrak{r}_{4,1}$ & & & & no lcK structure & & & no & \\
	\hline
$\mathfrak{r}_{4,\alpha,1}$ & $\alpha\not\in\{0,1\}$ & & & $-2e^4$ & $e^{13}+\sigma e^{24}$ & $\sigma<0$ & never & \\
	\hline
$\mathfrak{r}_{4,\alpha,\alpha}$, $\hat{\mathfrak{r}}_{4,-1}$ & $\alpha\not\in\{-1,0,1\}$ & & & $-2\alpha e^4$ & $\sigma e^{14}+e^{23}$ & $\sigma<0$ & never & \\
	\hline
\multirow{2}{*}{$\mathfrak{r}^\prime_{4,0,\delta}$} & \multirow{2}{*}{$\delta>0$} & $J_1$ & & no lcK structure & & & \multirow{2}{*}{no} & \\
	\cline{3-7}
& & $J_2$ & & no lcK structure & & & \\
	\hline
\multirow{2}{*}{$\mathfrak{r}^\prime_{4,\gamma,\delta}$} & \multirow{2}{*}{$\delta>0$, $\gamma\neq0$} & $J_1$ & & $-2\gamma e^4$ & $\sigma e^{14}+e^{23}$ & $\sigma<0$ & \multirow{2}{*}{never} & \multirow{2}{*}{$\gamma=-\frac{1}{2}$: Inoue $S^0$ \cite{tricerri, belgun}} \\
	\cline{3-7}
& & $J_2$ & & $-2\gamma e^4$ & $\sigma e^{14}-e^{23}$ & $\sigma<0$ & \\
	\hline
\multirow{2}{*}{$\mathfrak{d}_{4}$} & & $J_1$ & & $-e^4$ & $-e^{13}+\sigma e^{24}$ & $\sigma<0$ & \multirow{2}{*}{never} & \multirow{2}{*}{Inoue $S^+$ \cite{tricerri, belgun}} \\
	\cline{3-7}
& & $J_2$ & & no lcK structure & & & \\
	\hline
\multirow{3}{*}{$\mathfrak{d}_{4,1}$} & & & & $-e^4$ & $\sigma e^{14}+e^{23}$ & $\sigma>0$ & \multirow{3}{*}{never} & \\
	\cline{3-7}
& & & & $e^2+\theta_4e^4$ & $\left( \frac{\omega_{23} {\left(\theta_{4} + 1\right)}}{\theta_{2}^{2}} \right)  e^{12} + \left( \frac{{\left(\omega_{12} \theta_{2} - \omega_{23}\right)} {\left(\theta_{4} + 1\right)}}{\theta_{2}^{2}} \right)  e^{14}$ & $\omega_{23}> 0$, $(\omega_{12}\theta_2 - \omega_{23})(\theta_4 + 1)>0$,  & \\
& & & & & $+ \frac{\omega_{23}}{\theta_{2}^{2}}  e^{23} + \left( -\frac{\omega_{23} {\left(\theta_{4} + 1\right)}}{\theta_{2}^{2}} \right)  e^{34}$ & $(\omega_{12}\theta_2-\omega_{23}-\omega_{23}(\theta_4+1))\omega_{23}(\theta_4+1)>0$ & \\
\hline
\multirow{3}{*}{$\mathfrak{d}_{4,\frac{1}{2}}$} & & $J_1$ & & $\theta_4e^4$ & $\tau (e^{12}-(\sigma+1)e^{34}))$ & $\tau>0$, $\sigma+1>0$, $\theta_4\neq0$ & \multirow{3}{*}{never} & \\
 \cline{3-7}
& & $J_2$ & & $-\frac{3}{2}e^4$ & $\sigma(e^{12}+\frac{1}{2}e^{34}))$ & $\sigma<0$ & \\
 \cline{3-7}
& & $J_3$ & & $-\frac{3}{2}e^4$ & $\sigma e^{14}-e^{23}$ & $\sigma<0$ & \\
	\hline
\multirow{2}{*}{$\mathfrak{d}_{4,\lambda}$} & \multirow{2}{*}{$\lambda\not\in\{\frac{1}{2},1\}$} & $J_1$ & & $(\lambda-2)e^4$ & $\sigma e^{14}
+e^{23}$ & $\sigma>0$, $ \lambda \neq 2$ & \multirow{2}{*}{never} & \\
\cline{3-7}
& & $J_2$ & & $-(1+\lambda)e^4$ & $e^{13}+\sigma e^{24}$ & $(\lambda-1)\sigma>0$  & \\
	\hline
\multirow{2}{*}{$\mathfrak{d}^\prime_{4,0}$} & & $J_2$ & & $\mu e^4$ & $\sigma (e^{12}-\mu e^{34})$ & $\sigma>0$, $\mu<0$ & \multirow{2}{*}{always} & \multirow{2}{*}{secondary kodaira \cite{belgun}} \\
\cline{3-7}
& & $J_3$ & & $\mu e^4$ & $\sigma (e^{12}-\mu e^{34})$ & $\sigma>0$, $\mu>0$ & \\
	\hline
\multirow{4}{*}{$\mathfrak{d}^\prime_{4,\delta}$} & \multirow{4}{*}{$\delta>0$} & $J_1$ & & $\mu e^4$ & $\sigma (e^{12}-(\delta+\mu) e^{34})$ & $\sigma<0$, $\delta+\mu<0$, $\mu\neq0$ & \multirow{4}{*}{never} & \\
\cline{3-7}
& & $J_2$ & & $\mu e^4$ & $\sigma (e^{12}-(\delta+\mu) e^{34})$ & $\sigma>0$, $\delta+\mu<0$, $\mu\neq0$ & \\
\cline{3-7}
& & $J_3$ & & $\mu e^4$ & $\sigma (e^{12}-(\delta+\mu) e^{34})$ & $\sigma>0$, $\delta+\mu>0$, $\mu\neq0$ & \\
\cline{3-7}
& & $J_4$ & & $\mu e^4$ & $\sigma (e^{12}-(\delta+\mu) e^{34})$ & $\sigma<0$, $\delta+\mu>0$, $\mu\neq0$ & \\
\hline
$\mathfrak{h}_4$ & & & & no lcK structure & & & no & \\
\hline\bottomrule
\end{tabular}
}}
\caption{Locally conformally K\"ahler non-K\"ahler structures on $4$-dimensional Lie algebras, up to complex automorphisms of the Lie algebra.}
\label{table:lck-solvable}
\end{table}
\normalsize
\end{landscape}

\end{document}